\documentclass[11pt]{article}

\addtolength{\topmargin}{-1cm}
\addtolength{\textheight}{2cm}
\addtolength{\oddsidemargin}{-1cm}
\addtolength{\evensidemargin}{-1cm}
\addtolength{\textwidth}{2.1cm}

\usepackage{amsmath,amsthm}
\usepackage{stmaryrd}

\newcommand{\gw}{{\bf gw}}
\newcommand{\cupall}{\pmb{\pmb{\cup}}}

\newcommand{\yes}{\mbox{\sc  yes}}
\newcommand{\no}{\mbox {\sc no}}
\newcommand{\tw}{{\bf tw}}

\newcommand{\lp}{\bigl(}
\newcommand{\rp}{\bigr)}

\newcommand{\intv}[2]{\left \llbracket #1, #2 \right \rrbracket}
\newcommand{\sig}{{\bf sig}}
\newcommand{\tab}{{\bf table}}
\newtheorem{theorem}{Theorem}[section] 
\newtheorem*{theorem*}{Theorem} 
 
\newtheorem*{observation*}{Observation}
\newtheorem{proposition}{Proposition}[section] 
\newtheorem*{proposition*}{Proposition} 
\newtheorem{corollary}{Corollary}[section] 
\newtheorem*{corollary*}{Corollary}
\newtheorem{conjecture}{Conjecture}[section] 
\newtheorem*{conjecture*}{Conjecture}
\newtheorem{lemma}{Lemma} [section] 
\newtheorem*{lemma*}{Lemma}
 
\newtheorem*{definition*}{Definition} 
 
\newtheorem*{claim*}{Claim}
\usepackage[utf8x]{inputenc}
\usepackage[greek,russian,english]{babel}
\usepackage{amssymb}
\usepackage{graphicx}
\usepackage{hyperref}
\usepackage{color}

\begin{document}

\date{}

\title{The Parameterized Complexity of Graph Cyclability\thanks{The first author was supported by the European Research Council under the European Union's Seventh Framework Programme (FP/2007-2013)/ERC Grant Agreement n. 267959.
The second author was supported by the Foundation for Polish Science (HOMING PLUS/2011-4/8) and 
National Science Center (SONATA 2012/07/D/ST6/02432). The third and the fourth author were co-financed by the E.U. (European Social Fund - ESF) and Greek national funds through the Operational Program ``Education and Lifelong Learning'' of the National Strategic Reference Framework (NSRF) - Research Funding Program: ``Thales. Investing in knowledge society through the European Social Fund''. Emails: \texttt{Petr.Golovach@ii.uib.no}, \texttt{mjk@mimuw.edu.pl}, \texttt{spyridon.maniatis@gmail.com}, \texttt{sedthilk@thilikos.info}}}
\author{Petr A. Golovach\thanks{Department of Informatics, University of Bergen, Bergen, Norway.} \and Marcin Kamiński\thanks{Faculty of Mathematics, Informatics and Mechanics, University of Warsaw, Warsaw, Poland.} \and  Spyridon Maniatis\thanks{Department of Mathematics, National \& Kapodistrian University of Athens, Athens, Greece.}\and  \\ Dimitrios M. Thilikos$^{\S,}$\thanks{ AlGCo project-team, CNRS, LIRMM, Montpellier, France.}}

%

\maketitle

\begin{abstract}
\noindent  The cyclability of a graph is the maximum integer $k$ for which  every $k$  vertices lie on a cycle. The algorithmic version of the problem, given a graph $G$ and a non-negative integer $k,$ decide whether the cyclability of $G$ is at least $k,$ is  {\sf NP}-hard.  We study the parametrized complexity of this problem.
%
We prove that this problem, parameterized by $k,$ is ${\sf co\mbox{-}W[1]}$-hard and that its does not admit a polynomial kernel on planar graphs,  unless ${\sf NP}\subseteq{\sf co}\mbox{-}{\sf NP}/{\sf poly}$.
On the positive side, we give an  {\sf FPT} algorithm for planar graphs that runs in time $2^{2^{O(k^2\log k)}}\cdot n^2$. Our algorithm is based on a series of graph-theoretical results on cyclic linkages in planar graphs.
\end{abstract}

{\bf Keywords:}  cyclability, linkages, treewidth, parameterized  complexity

\section{Introduction}

In the opening paragraph of his book \emph{Extremal Graph Theory} Béla Bollobás notes: ``\emph{Perhaps the most basic property a graph may posses is that of being connected. At a more refined level, there are various functions that may be said to measure the connectedness of a connected graph.}'' Indeed, connectivity is one of the fundamental properties considered in graph theory and studying different variants of connectivity provides a better understanding of this property. Many such alternative connectivity measures have been studied in graph theory  but very little is known about their algorithmic properties. The main goal of this paper is to focus on one such parameter -- {\sl cyclability} -- from an algorithmic point of view. Cyclability can be thought of as a quantitative measure of Hamiltonicity, or as a natural ``tuning'' parameter between connectivity and Hamiltonicity. \medskip

\noindent {\it Cyclability}. 
For a positive integer $k$, a graph $G$ is \emph{$k$-cyclable} if every $k$ vertices of $G$ lie on a common cycle; we assume that any graph is $1$-cyclable. 
The \emph{cyclability} of a graph $G$ is the maximum integer $k$ for which $G$ is $k$-cyclable.  
Cyclability is well studied in the graph theory literature. Dirac proved that the cyclability of a $k$-connected graph is at least $k,$ for $k \geq 2$ \cite{Dirac60}. Watkins and Mesner \cite{watkins1967cycles} characterized the extremal graphs for the theorem of Dirac. There is a variant of cyclability restricted only to a set of vertices of a graph. Generalizing the theorem of Dirac, Flandrin et al.  \cite{flandrin2007generalization} proved that if a set of vertices $S$ in a graph $G$ is $k$-connected, then there is a cycle in  $G$ through any $k$ vertices of $S$. (A set of vertices $S$ is $k$-connected in $G$ if a pair of vertices in $S$ cannot be separated by removing at most $k-1$ vertices of $G$.) Another avenue of research is lower-bounds on cyclability of graphs in restricted families. For example, every 3-connected claw-free graph has cyclability at least 6 \cite{plummer2001nine} and every $3$-connected cubic planar graph has cyclability at least 23 \cite{aldred1999cycles}.

Clearly, a graph $G$ is Hamiltonian if and only if its  cyclability equals $|V(G)|$. Therefore, we can think of cyclability as a quantitive measure of Hamiltonicity. A~graph $G$ is \emph{hypohamiltonian} if it is not Hamiltonian but all graphs obtained from $G$ by deleting one vertex are. Clearly, a graph $G$ is hypohamiltonian if and only if its  cyclability equals $|V(G)| -1$. Hypohamiltonian graphs appear in combinatorial optimization and are used to define facets of the traveling salesman polytope \cite{grotschel1977hypohamiltonian}. Curiously, the computational complexity of deciding whether a graph is hypohamiltonian seems to be open. 

To our knowledge no algorithmic study of cyclability has been done so far.
In this paper we initiate this study. For this, we consider the following problem. 

\begin{center}
\fbox{\begin{minipage}{10cm}
\noindent{\sc Cyclability}\\
{\sl Input}:  A graph $G$ and a non-negative integer $k$. \\
{\sl Question}: Is every $k$-vertex set $S$ in $G$ cyclable, i.e., 
is there a cycle  $C$ in $G$ such that $S\subseteq V(C)$?
\end{minipage}}
\end{center}

%
%
%

{\sc Cyclability} with $k=|V(G)|$ is {\sc Hamiltonicity} and {\sc Hamiltonicity}  is  {\sf NP}-complete even for planar cubic graphs~\cite{GareyJT76}. Hence, we have the following. 

\begin{proposition}\label{prop:NP-h}
{\sc Cyclability} is {\sf NP}-hard for cubic planar graphs. 
\end{proposition}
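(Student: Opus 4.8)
The plan is to observe that {\sc Cyclability} literally contains {\sc Hamiltonicity} as the special case $k=|V(G)|$, and then to quote the known {\sf NP}-completeness of the latter on cubic planar graphs. First I would set up the (trivial) polynomial-time many-one reduction: given an instance $G$ of {\sc Hamiltonicity}, output the instance $(G,|V(G)|)$ of {\sc Cyclability}. This transformation runs in linear time, does not modify $G$ at all, and therefore sends cubic planar graphs to cubic planar graphs, so the restriction of the target problem to cubic planar instances is respected.

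Next I would check correctness of the reduction. Since $V(G)$ is the unique $|V(G)|$-vertex subset of $V(G)$, the instance $(G,|V(G)|)$ is a \yes-instance of {\sc Cyclability} if and only if there is a cycle $C$ in $G$ with $V(G)\subseteq V(C)$, that is, if and only if $G$ has a Hamiltonian cycle. Equivalently, phrased through the cyclability value: the cyclability of $G$ never exceeds $|V(G)|$, hence it is at least $|V(G)|$ precisely when $G$ is Hamiltonian. This establishes the required equivalence between the two instances.

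Finally, invoking the result of Garey, Johnson and Tarjan~\cite{GareyJT76} that {\sc Hamiltonicity} is {\sf NP}-complete even when restricted to cubic planar graphs, composing that hardness with the reduction above yields that {\sc Cyclability} is {\sf NP}-hard on cubic planar graphs, as claimed.

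There is essentially no obstacle in this argument; the only point requiring a moment's care is the boundary behaviour of the definition, namely that a cycle through all vertices of $G$ is exactly a Hamiltonian cycle regardless of whether $G$ is connected — a disconnected cubic graph is simply a \no-instance of both problems — so no connectivity hypothesis is needed for the equivalence to go through.
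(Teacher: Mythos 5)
Your proposal is correct and is exactly the paper's argument: the paper observes that {\sc Cyclability} with $k=|V(G)|$ is {\sc Hamiltonicity} and invokes the {\sf NP}-completeness of the latter on cubic planar graphs from~\cite{GareyJT76}. Your additional remarks on the trivial reduction preserving cubic planarity and on disconnected instances are fine but not needed beyond what the paper states.
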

%

\noindent{\it Parameterized complexity}. 
A \emph{parameterized problem} is a language $L\subseteq\Sigma^*\times\mathbb{N}$, where $\Sigma$ is a finite alphabet.
A parameterized problem has as instances pairs $(I,k)$ where $I\subseteq \Sigma^*$ is the main part 
and $k\in\mathbb{N}$ is the parameterized part.   Parameterized Complexity settles the question of whether a parameterized  problem is solvable by  an algorithm (we call it {\sf FPT}-{\em algorithm}) of time complexity $f(k)\cdot |I|^{O(1)}$ where $f(k)$ is a function that does not depend on $n$. If such an algorithm exists, we say that the parameterized problem belongs to the 
class {\sf FPT}. In  a series of fundamental papers (see~\cite{DowneyF95-I,DowneyF95-II,DowneyF93,DowneyF92}), Downey and Fellows defined a series of complexity classes,  such as ${\sf W}[1]\subseteq {\sf W}[2]\subseteq\cdots\subseteq{\sf W}[SAT]\subseteq {\sf W}[P]\subseteq {\sf XP}$
and proposed special types of reductions such that hardness for 
some of the above classes makes it rather impossible that a problem belongs to {\sf FPT}\ (we stress
that ${\sf FPT}\subseteq {\sf W}[1]$).  We mention that {\sf XP} is the class of parameterized problems such that for every $k$ there is an algorithm that solves that problem in time $O(|I|^{f(k)}),$ for some function $f$ (that does not depend on $|I|$).
For more  on parameterized complexity, we refer the reader to \cite{CyganFKLMPPS15} (see also~\cite{DowneyF99}, \cite{FG06}, and \cite{Nie02}). \medskip

\noindent{\it Our results}. In this paper we deal with the parameterized complexity of  {\sc Cyclability} when parameterized by $k$. It is easy to see that {\sc Cyclability} is in {\sf XP}. 
For a graph $G,$ we can check all possible subsets $X$ of $V(G)$ of size $k$. For each subset $X,$ we consider $k!$ orderings of its vertices, and for each sequence of $k$ vertices $x_1,\ldots,x_k$ of $X,$ 
we use the main algorithmic result of Robertson and Seymour in~\cite{RobertsonS95b}, to check whether there are $k$ disjoint paths that join $x_{i-1}$ and $x_i$ for $i\in\{1,\ldots,k\}$ assuming that $x_0=x_k$. We return a \yes-answer if and only if  we can obtain the required disjoint paths for each set~$X$, for some ordering. \medskip

Is it possible that {\sc Cyclability} is  {\sf FPT} when parameterized by $k$? 
Our results are the following:
%

Our first results is that an  ${\sf FPT}$-algorithm for this problem is rather unlikely as it is {\sf co-W[1]}-hard even when restricted to split graphs\footnote{A {\em split graph} is any graph $G$ whose vertex set can be paritioned into  two sets $A$ and $B$ such that $G[A]$ is a complete graph
and $G[B]$ is an edgeless graph.}:

\begin{theorem}
\label{thm:W}
It is {\sf W[1]}-hard to decide for a split graph $G$ and a positive integer $k,$
whether $G$ has $k$ vertices such that there is no cycle in $G$ that contains these $k$ vertices, when the problem is parameterized by $k$.
\end{theorem}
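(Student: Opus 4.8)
\noindent\emph{Proof plan.} The plan is to give a parameterized reduction from \textsc{Clique} (equivalently, from \textsc{Multicolored Clique}, which tends to be more convenient for gadget constructions), a problem that is \textsf{W[1]}-hard parameterized by the size of the sought clique. Given a graph $H$ and an integer $\ell$, I would build a split graph $G$ with clique side $A$ and independent side $B$, together with an integer $k$ polynomial in $\ell$, so that $H$ has a clique on $\ell$ vertices if and only if $G$ contains $k$ vertices lying on no common cycle. Since \textsc{Clique} is \textsf{W[1]}-hard this is exactly what is required (and, taking complements, it also yields that \textsc{Cyclability} is co-\textsf{W[1]}-hard).

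Before the gadgets I would record the elementary structure of cycles in split graphs, which is what makes such a reduction possible. If $G$ has parts $A$ (a clique) and $B$ (an independent set), then a cycle meets $B$ in a cyclic sequence of vertices, consecutive ones joined by internally disjoint paths through $A$, each such path using at least one vertex of $A$; hence every vertex of $B$ on the cycle has two distinct neighbours in $A$, and if some $T\subseteq S\cap B$ has $|N(T)\cap A|<|T|$ then $S$ lies on no cycle at all (a Hall-type obstruction). Moreover a vertex $b\in B$ with exactly two neighbours $a,a'$ in $A$ is \emph{rigid}: on every cycle through $b$ its two neighbours on the cycle are precisely $a$ and $a'$, so a few rigid vertices with overlapping neighbourhoods force short subcycles (e.g.\ two such vertices already span a $4$-cycle that cannot be extended) or force the cycle to consume many vertices of $A$. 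The construction then uses \emph{selection gadgets}, one per vertex (or per colour class) of $H$, built from rigid vertices whose only cyclable configuration encodes the choice of a single vertex of $H$, and \emph{consistency gadgets}, one per relevant pair, that admit a cyclic routing exactly when the two chosen vertices are adjacent in $H$; the size of $A$ is tuned so that the routing budget is tight and any cycle is forced to respect all gadget choices. The bad $k$-set is assembled from one designated part of every gadget, so that it lies on a common cycle precisely when all choices are mutually consistent, i.e.\ precisely when $H$ has the desired clique.

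The step I expect to be the main obstacle is the gadget engineering together with its correctness analysis, essentially because $A$ being a clique makes split graphs very ``routing friendly'': unless the construction is made delicately tight — bounded degrees into $A$ on the critical vertices, a precisely chosen number of spare clique vertices, and deliberately created twins — almost every $k$-set lies on a cycle and the instance becomes trivial. Concretely, the work is to (i) design the selection and consistency gadgets so that the Hall/rigidity obstruction fires on exactly the inconsistent (non-clique) configurations; (ii) prove the forward implication, producing from an $\ell$-clique of $H$ a $k$-set of $G$ whose non-cyclability is witnessed by that obstruction; (iii) prove the backward implication, extracting from an arbitrary bad $k$-set a consistent system of gadget choices and hence an $\ell$-clique, where the tightness of the budget is used to exclude ``cheating'' routings; and (iv) rule out spurious bad $k$-sets that mix parts of different gadgets or that include vertices of $A$ — handled by padding $A$ so that every $k$-set touching only few vertices of $B$ is automatically cyclable, and by choosing $k$ large enough that a bad set must use exactly one designated part of each gadget.
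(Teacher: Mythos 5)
Your high-level plan points in the right direction --- a parameterized reduction from \textsc{Clique}, a split graph with the ``bad'' set living on the independent side, and a Hall-type counting obstruction ($|N(T)|<|T|$ for $T$ in the independent side kills all cycles through $T$) as the mechanism that makes a set non-cyclable. This matches the skeleton of the paper's proof. However, the proposal stops exactly where the proof begins: no concrete construction is given, and the gadget architecture you sketch (per-vertex selection gadgets, per-pair consistency gadgets, rigid degree-$2$ vertices) is not shown to work and is likely harder to realize than you anticipate, precisely because of the ``routing friendliness'' you yourself flag. The paper needs no selection or consistency gadgets at all: each vertex $x$ of the \textsc{Clique} instance is blown up into $s=(k-1)/2$ mutually adjacent copies $v_x^1,\dots,v_x^s$ (all of these forming the clique side), each edge $\{x,y\}$ becomes a single independent-side vertex $u_{xy}$ adjacent to all copies of $x$ and $y$, and one extra vertex $w$ is attached to the whole clique side. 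A $k$-clique $X$ then yields the bad set $Z=\{u_{xy}: x,y\in X\}\cup\{w\}$ of size $k'=k(k-1)/2+1$: the $\binom{k}{2}$ edge-vertices form an independent set whose neighbourhood has exactly $\binom{k}{2}$ vertices, so any cycle through them is confined to $Y\cup N(Y)$ and can never pick up $w$. That forward direction is the easy half and your Hall observation covers it.

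The genuine gap is the converse: you must prove that if $H$ has \emph{no} $\ell$-clique then \emph{every} $k'$-subset of $G$ lies on a common cycle, and your plan only gestures at this (``extracting from an arbitrary bad $k$-set a consistent system of gadget choices''). In the paper this is by far the longest part of the argument: it is an induction on the size of the edge-vertex part of the candidate set, repeatedly peeling off low-degree vertices of the auxiliary graph $H(Z)$, invoking Hall's theorem to find a matching that saturates the chosen edge-vertices into \emph{distinct} clique copies (this is why each original vertex is replicated $(k-1)/2$ times --- the multiplicity is exactly what makes the Hall condition verifiable by counting), and in the dense case invoking Erd\H{o}s's sufficient condition for Hamiltonicity to route a cycle through $H(Z)$ itself. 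None of this machinery, nor any substitute for it, appears in your proposal; without it you have no argument that a no-instance of \textsc{Clique} maps to a no-instance of your problem, so the reduction is not established. To complete the proof you would need either to carry out this matching-plus-Hamiltonicity analysis for the direct construction, or to supply and verify the gadgets you allude to --- and for the latter you would still face the same universal quantifier over all $k'$-subsets, including those that mix gadget parts arbitrarily.
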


On the positive side we prove that the same parameterized problem admits an ${\sf FPT}$-algorithm when its input is restricted to be a planar graph.

\begin{theorem}\label{mainalgs} 
The {\sc  Cyclability} problem, when parameterized by $k,$ is in {\sf FPT} when  its input graphs  are  restricted to be  planar graphs.
Moreover, the corresponding {\sf  FPT}-algorithm runs in   $2^{2^{O(k^2\log k)}}\cdot n^2$ steps.
\end{theorem}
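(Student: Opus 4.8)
The plan is to design an {\sf FPT} algorithm for {\sc Cyclability} on planar graphs by following the now-standard ``irrelevant vertex'' paradigm of Robertson and Seymour, adapted to the cyclic setting. First I would observe that the obstacle to high cyclability is a single ``bad'' $k$-set: $G$ has cyclability at least $k$ if and only if \emph{every} set $S$ of at most $k$ vertices lies on a common cycle. So the algorithm must, in effect, either find a witnessing bad set or certify that none exists. To make this tractable I would fix a set $S$ of size $k$ and a cyclic order on $S$ and ask for a \emph{cyclic linkage}: $k$ vertex-disjoint paths joining consecutive vertices of $S$ in that order (this is exactly the {\sf XP} subroutine described before Theorem~\ref{mainalgs}). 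The task reduces to: given $(G,S)$ planar, decide whether some cyclic ordering of $S$ admits a cyclic linkage, and do this in time $f(k)\cdot n^{O(1)}$; then branch over all $O(n^k)$ choices of $S$ — wait, that is not {\sf FPT}, so the real work is to avoid enumerating $S$. Instead I would phrase the whole problem as: decide whether there exists a $k$-set $S$ that is \emph{not} cyclable, which is a single existential question, and attack it with treewidth-based dynamic programming once the treewidth is bounded.

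The core structural step, and the one I expect to be the main obstacle, is an \emph{irrelevant vertex theorem for cyclic linkages in planar graphs}: if a planar graph $G$ has treewidth larger than some bound $g(k)$, then $G$ contains a large flat wall (a large grid-like area with no ``crossings'' routed through it via the rest of the graph), and one can identify a vertex $v$ in the ``middle'' of that wall whose deletion changes neither the cyclability nor the status of any potential bad $k$-set — intuitively, because any cycle through $k$ prescribed vertices that uses $v$ can be rerouted around a concentric cycle of the wall without touching the prescribed vertices. Proving that such a $v$ is genuinely irrelevant for \emph{all} choices of $S$ simultaneously is the delicate part: one must show that the homotopy type of a cyclic linkage in the disc bounded by the wall is essentially unique, so rerouting is always possible, and that the bound $g(k)$ can be taken doubly exponential in a polynomial in $k$ (this is where the $2^{2^{O(k^2\log k)}}$ in the running time originates). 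I would build this on the flat-wall / unique-linkage machinery of Robertson--Seymour, specialized to planar graphs where a Euler-formula or nerve argument gives explicit (rather than enormous) bounds, together with the combinatorics of noncrossing perfect matchings / cyclic pairings (there are at most $2^{O(k\log k)}$ relevant ``signatures'' of how the $k$ terminals attach to the boundary of the wall, hence the $k^2\log k$ in the exponent after a second exponentiation coming from the wall size needed to dominate all signatures).

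With the irrelevant vertex theorem in hand, the algorithm is: (1) compute the treewidth of the planar input $G$ (approximately, in linear or $n^{O(1)}$ time using the planar $4$-approximation of Robertson--Seymour or a constant-factor algorithm); (2) if $\tw(G) > g(k)$, find a flat wall and an irrelevant vertex in $n^{O(1)}$ time, delete it, and recurse — this loop runs at most $n$ times, contributing the $n^2$ factor after each iteration costs roughly $n$; (3) once $\tw(G) \le g(k)$, solve {\sc Cyclability} by dynamic programming over a tree decomposition of width $g(k)$. The dynamic programming must, along each bag, track for a hypothetical bad set $S$ the partial ``cyclic linkage'' information — which terminals have been seen, which path-endpoints are dangling in the bag and how they are cyclically matched — and verify that \emph{no} choice of $\le k$ terminals can be completed to a cycle; the number of states per bag is bounded by a function of $g(k)$, which is again doubly exponential in $\text{poly}(k)$, so the whole DP runs in $2^{2^{O(k^2\log k)}}\cdot n$ time, and combined with step~(2) gives the claimed $2^{2^{O(k^2\log k)}}\cdot n^2$ bound. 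Correctness follows because step~(2) preserves the answer (irrelevant vertex theorem) and step~(3) is an exact algorithm on bounded-treewidth instances.

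I expect the hardest technical content to be entirely in the irrelevant-vertex theorem for \emph{cyclic} linkages: the acyclic linkage version is classical, but here the terminals come in a cyclic pattern around the wall, one must handle the case where several terminals of $S$ themselves lie deep inside the wall (forcing them towards the boundary via rerouting), and one must make the homotopy/rerouting argument uniform over all $\binom{n}{k}$ candidate sets $S$ and all $(k-1)!$ cyclic orders. The running-time bound then hinges on making every ``large enough'' quantity in the Robertson--Seymour apparatus explicit in the planar case; I would cite the excerpt's promised ``series of graph-theoretical results on cyclic linkages in planar graphs'' for these, and devote the bulk of the paper to establishing them before assembling the algorithm as above.
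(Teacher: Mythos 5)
Your overall architecture (win/win on treewidth: either bounded treewidth and dynamic programming, or a large wall and an irrelevant vertex, iterated at most $n$ times) matches the paper's skeleton, and your DP sketch with cyclic pairings of bag vertices is essentially the paper's Section on dynamic programming. But there is a genuine gap in the irrelevant-vertex step, and it is precisely the point where {\sc Cyclability} departs from the classical linkage setting. You propose to find a vertex $v$ in the middle of a large wall ``whose deletion changes neither the cyclability nor the status of any potential bad $k$-set.'' This is not true in general: in {\sc Cyclability} \emph{every} vertex is a potential terminal, so $v$ itself may belong to a bad $k$-set $S$. Deleting $v$ removes the obligation to route a cycle through $v$, so a \no-instance whose only witnesses contain $v$ can become a \yes-instance after deletion; the equivalence you need fails in the direction ``$(G\setminus v,k)$ \yes\ $\Rightarrow$ $(G,k)$ \yes.'' You acknowledge that terminals may lie deep inside the wall and suggest ``forcing them towards the boundary via rerouting,'' but no such rerouting can rescue the deletion of a vertex that is itself a terminal of the quantified set $S$.

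The paper's resolution is a two-step scheme that your proposal is missing. It generalizes to an annotated problem (a set $R$ of colored vertices, only $k$-subsets of $R$ must be cyclable) and then distinguishes two cases inside the wall. If some large annulus of concentric cycles is free of colored vertices, a central vertex is \emph{problem-irrelevant} and may be deleted — this is where the bounded-penetration lemma for strongly vital cyclic linkages (penetration at most $16|T|-1$) does the work, and it is only valid because the deleted vertex is guaranteed not to be a terminal. If instead $R$ is dense in the annulus, no vertex can be deleted; the algorithm runs auxiliary DP calls on the cropped subgraphs $\hat{C}_i$ (each augmented with one representative colored vertex $w_i$ from outside), and either one of these fails — in which case the whole instance is a \no-instance, an outcome your scheme cannot detect — or all succeed, in which case a central \emph{colored} vertex can be un-colored (not deleted). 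Proving color-irrelevance requires the railed-annulus stitching argument (the rails lemma), which glues a cycle through the inner terminals to a cycle through the outer terminals via two rails carrying the same ``color.'' Without the annotated version, the color-irrelevance step, and the \no-instance detection from the cropped DP calls, the recursion cannot make progress on instances where candidate terminals are spread throughout the wall, which is the generic case since initially $R=V(G)$.
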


Actually, our algorithm solves the more general problem where the input comes with a subset $R$ of annotated vertices and the question is whether every $k$-vertex subset of $R$ is cyclable.

Finally, we prove that even for the planar case, the following negative result holds. 

\begin{theorem}\label{thm:no-kernel}
\textsc{Cyclability}, parameterized by $k$, has no polynomial kernel unless ${\sf NP}\subseteq{\sf co}\mbox{-}{\sf NP}/{\sf poly}$ when restricted to cubic planar graphs.
\end{theorem}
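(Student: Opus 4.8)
The plan is to rule out a polynomial kernel through an \textbf{AND-cross-composition} into \textsc{Cyclability} on cubic planar graphs (parameterized by $k$); since the source problem below is {\sf NP}-hard, the cross-composition framework together with Drucker's theorem for the AND case (see \cite{CyganFKLMPPS15}) then yields the stated conclusion under ${\sf NP}\not\subseteq {\sf co}\mbox{-}{\sf NP}/{\sf poly}$. An AND-composition (rather than an OR-composition) is the natural choice here, because ``$G$ has cyclability at least $k$'' is a statement quantified \emph{universally} over the $k$-vertex subsets of $G$: if the construction makes the cyclability of one suitable $k$-set encode the solvability of a single input instance, then the universal quantifier over $k$-sets automatically produces the conjunction over all inputs. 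As source problem we take the variant of Hamiltonicity on cubic planar graphs in which the input additionally specifies an edge $e$ that must belong to the Hamiltonian cycle (equivalently, $s$--$t$-Hamiltonian path in planar graphs of maximum degree $3$ in which $s,t$ are the only degree-$2$ vertices); this is {\sf NP}-complete, which follows from Proposition~\ref{prop:NP-h} by a standard local gadget.

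For the composition, take as polynomial equivalence relation ``both strings are well-formed instances with the same number of vertices'' (polynomially many classes, poly-time decidable). Let $(G_1,e_1),\dots,(G_t,e_t)$ be instances on the same vertex count $n$ (assuming $t\ge2$, which is w.l.o.g.), write $e_i=u_iw_i$, and set $H_i:=G_i-e_i$, so that $u_i,w_i$ are precisely the degree-$2$ vertices of $H_i$. Build $G$ from the disjoint union $H_1\cup\dots\cup H_t$ by adding, for $i\in\{1,\dots,t\}$ (indices modulo $t$), the edge $f_i:=w_iu_{i+1}$. Then every vertex of $G$ has degree $3$; and taking for each $G_i$ a plane embedding with $e_i$ on the outer face and arranging the $H_i$ cyclically, like beads on a necklace, makes $G$ plane. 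Finally set $k:=n+1$. Thus $G$ is cubic planar on $tn$ vertices, is produced in time polynomial in $\sum_i|(G_i,e_i)|$, and $k\le\max_i|(G_i,e_i)|+1$, so all the size constraints of a cross-composition hold.

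Correctness hinges on one structural fact: $H_i$ is joined to the rest of $G$ only by the two edges $f_{i-1}$ and $f_i$, so $\{f_{i-1},f_i\}$ is an edge cut separating $V(H_i)$ from $V(G)\setminus V(H_i)$; hence a cycle of $G$ that meets both sides must, by parity, use both $f_{i-1}$ and $f_i$, and therefore meets $H_i$ in exactly one path, running from $u_i$ to $w_i$. If every $(G_i,e_i)$ is a yes-instance, then each $H_i$ has a Hamiltonian $u_i$--$w_i$ path $P_i$, and $P_1,f_1,P_2,f_2,\dots,P_t,f_t$ closes up to a Hamiltonian cycle of $G$, so the cyclability of $G$ equals $tn\ge k$. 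Conversely, suppose some $(G_j,e_j)$ is a no-instance, i.e.\ $H_j$ has no Hamiltonian $u_j$--$w_j$ path. Pick any $z\in V(G)\setminus V(H_j)$ (this exists as $t\ge2$) and let $S:=V(H_j)\cup\{z\}$, so $|S|=n+1=k$. If some cycle $C$ of $G$ contained $S$, then $C$ would meet both $V(H_j)$ and its complement (because $z\notin V(H_j)$), so by the structural fact $C$ would intersect $H_j$ in a single $u_j$--$w_j$ path; since $C\supseteq V(H_j)$, this would be a Hamiltonian $u_j$--$w_j$ path of $H_j$, a contradiction. Hence $S$ is not cyclable and the cyclability of $G$ is at most $k-1$. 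Therefore $G$ has cyclability at least $k$ if and only if all inputs are yes-instances, completing the cross-composition and hence the theorem.

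The main obstacle is exactly this converse step: one must extract a \emph{non-cyclable set of size $O(n)$} (not $O(tn)$) from a single no-input. The obvious candidate $S=V(H_j)$ fails, because a cycle confined entirely to $H_j$ could still cover $V(H_j)$ whenever $G_j-e_j$ happens to be Hamiltonian, even though $G_j$ has no Hamiltonian cycle through $e_j$. Adjoining one anchor vertex $z$ from a neighbouring bead forces any cycle through $S$ to leave $H_j$, which reinstates the ``single $u_j$--$w_j$ path'' structure and makes the argument go through. The remaining ingredients — the {\sf NP}-hardness of the prescribed-edge variant while staying cubic and planar, and the planar-embedding bookkeeping that assembles the beads into a single plane cubic graph — are routine.
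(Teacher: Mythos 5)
Your proposal is correct and follows essentially the same route as the paper: an AND-cross-composition of \textsc{Hamiltonicity with a Given Edge} on cubic planar graphs (whose {\sf NP}-hardness the paper establishes via an explicit gadget in Lemma~\ref{lem:Ham-edge}), arranging the instances as beads on a necklace joined cyclically through the endpoints of the prescribed edges. The only differences are cosmetic: the paper subdivides $e_i$ twice rather than deleting it (so that the witnessing set $V(G_i)\cup\{u_i,v_i\}$ of size $n+2$ already forces the prescribed edge without needing your external anchor vertex $z$), and your parity argument on the two-edge cut correctly handles the resulting $k=n+1$ variant.
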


The above result indicates that the {\sc  Cyclability} does not follow the kernelization behavior of 
many other problems (see, e.g.,~\cite{BodlaenderFLPST09meta}) for which surface embeddability enables the construction of polynomial kernels. \\

\noindent{\it Our techniques}. Theorem \ref{thm:W} is proved in Section~\ref{hardness} and the proof is a reduction from the standard parameterization of the {\sc Clique} problem. 

The two key ingredients in the proof of Theorem~\ref{mainalgs} are a new, two-step, version of the \emph{irrelevant vertex technique} and a new combinatorial concept of \emph{cyclic linkages} along with a strong notion of {\em vitality} on them (vital linkages played an important role in the Graph Minors series, in~\cite{RobertsonS-XXI} and~\cite{RobertsonS12irre}). The proof of Theorem~\ref{mainalgs} is presented in Section~\ref{profsalg}.
Below, we give a rough sketch of our method. 

We work with a variant of {\sc  Cyclability} in which some vertices (initially all) are colored. We only require that every $k$ colored  vertices lie on a common cycle. 
If the treewidth of the input graph $G$ is ``small'' (bounded by an appropriate  function of $k$), we employ a dynamic programming routine to solve the problem. Otherwise, there exists a cycle in a plane embedding of $G$ such that the graph $H$ in the interior of that cycle is ``bidimensional'' (contains a large subdivided wall) but is still of bounded treewidth. 
This structure permits to distinguish in $H$ a sequence ${\cal C}$ of, sufficiently many, concentric cycles that are all traversed by some, sufficiently many, paths of $H$. Our first aim is to check whether the distribution of the colored vertices in these cycles yields some 
``big uncolored area'' of $H$. In this case we declare  some ``central'' vertex of this area {\em problem-irrelevant} in the sense that 
its removal creates an equivalent instance of the problem. If such an area does not exists, then $R$ is ``uniformly'' distributed 
inside the cycle sequence ${\cal C}$. Our next step is  to set up a sequence of instances of the problem, each corresponding to  the graph ``cropped'' by the interior  of the cycles of ${\cal C}$, where all vertices of a sufficiently big ``annulus'' in it
are now uncolored. As  the graphs of these instances are subgraphs of $H$ and therefore have bounded treewidth, we can get an answer for all of them by performing a sequence of dynamic programming calls (each taking a linear number of steps). At this point, we prove 
that if one of these instances is a \no-instance then initial instance is a \no-instance, so we just report it and stop.
Otherwise, we pick a colored vertex inside the most ``central'' cycle   of  ${\cal C}$ and prove that this 
vertex is {\em color-irrelevant}, i.e., an equivalent instance is created when this vertex is not any more colored.
In any case, the algorithm produces either a solution or some ``simpler'' equivalent instance that either contains a vertex less or a colored vertex less. This permits a linear number of 
recursive calls of the same procedure. To prove that these two last critical steps work as intended, we have to introduce several combinatorial tools. One of them is the notion of strongly vital linkages, a variant of the notion of vital linkages introduced 
in~\cite{RobertsonS-XXI}, which we apply to terminals traversed by cycles instead of terminals linked by paths, as it has been done in~\cite{RobertsonS-XXI}. This notion of vitality permits a significant restriction of the expansion of cycles which certify that sets of $k$ vertices are cyclable and is able to justify both critical steps of our algorithm.  
The proofs of the combinatorial results that support our algorithm are presented in Section~\ref{linkagelems}
and we believe that they have independent combinatorial  importance.

The proof of Theorem~\ref{thm:no-kernel} is given in Section~\ref{hardness} and is based on the cross-composition technique introduced by Bodlaender, Jansen, and Kratsch~in~\cite{BodlaenderJK14}. We show that a variant of the {\sc Hamiltonicity} problem AND-cross-composes to {\sc Cyclabilty}.
\medskip

\noindent{\it Structure of the paper}. The paper is organized as follows. In Section~\ref{defakl} we give a set of definitions that are necessary 
for the presentation of our algorithm. The main steps of the algorithm are presented in Section~\ref{profsalg} and 
the combinatorial results (along with the necessary definitions) are presented in Section~\ref{linkagelems}.
Section \ref{DP}  is devoted to a dynamic programming algorithm for {\sc Cyclability} and Section \ref{hardness} contains the {\sf co-W[1]}-hardness of {\sc Cyclability} for general graphs and the proof of the non-existence of a polynomial kernel on planar graphs.
We conclude with some discussion  and open questions in Section~\ref{discisert}.

%

\section{Definitions and preliminary results}
\label{defakl}
For any graph~$G,$ $V(G)$ (respectively $E(G)$) denotes the \emph{set
of vertices} (respectively \emph{set of edges}) of~$G$.
A graph~$G'$ is a \emph{subgraph} of a graph~$G$ if $V(G')
\subseteq V(G)$ and $E(G') \subseteq E(G)$, and we
denote this by~$G' \subseteq G$.
If $S$ is a set of vertices or a set of  edges of a graph $G,$
graph $G \setminus S$ is the graph obtained from $G$ after
the removal of the elements of $S.$ 
Given a  $S\subseteq V(G)$ we define $G[S]$ as the graph 
obtained from $G$ if we remove from it all vertices not belonging to $S$.
Also, given that $S\subseteq E(G)$, we denote by $G[S]$ the graph whose 
 vertex set is the set of the endpoints of the edges in $S$ and whose 
edge set is $S$.
Given two graphs $G_{1}$ and $G_{2},$ we define  $G_{1}\cap G_{2}=(V(G_1)\cap V(G_{2}),E(G_{1})\cap E(G_{2}))$ and  $G_{1}\cup G_{2}=(V(G_1)\cup V(G_{2}),E(G_{1})\cup E(G_{2}))$. Let ${\cal G}$ be a family of graphs. We denote by $\cupall {\cal G}$  the graph that is the union of all graphs in ${\cal G}$.
For every vertex~$v \in V(G)$, {\em the
 neighborhood} of~$v$ in~$G,$ denoted by~$N_G(v),$ is the subset
 of vertices that are adjacent to~$v,$ and its size is called the
 \emph{degree} of~$v$ in~$G,$ denoted by~${\rm deg}_G(v).$ The \emph{maximum} (respectively \emph{minimum})
 \emph{degree}~${\rm \Delta}(G)$ (respectively  $\delta(G)$) of a graph~$G$ is the maximum (respectively minimum) value taken
 by~${\rm deg}_G(v)$ over~$v\in V(G)$. 
For a set of vertices $U,$ $N_G(U)=\bigcup_{v\in U}N_G(v)\setminus U$. 
 A {\em cycle} of $G$ is a subgraph of $G$ that is connected and all its vertices have degree 2. We call a set of vertices 
 $S\subseteq V(G)$ {\em cyclable} if for some cycle $C$ of $G,$ it holds that $S\subseteq V(C)$.
A cycle $C$ in a graph $G$ is \emph{Hamiltonian} if $V(C)=V(G)$. Respectively, a graph $H$ is \emph{Hamiltonian} if it has a Hamiltonian cycle.  

%
%

\paragraph{Treewidth.}  A {\em tree decomposition} of a graph $G$ is a pair ${\cal D}=({\cal X}, T)$
in which $T$ is a tree and ${\cal X}=\{X_i\mid i\in V(T)\}$ is a family of subsets of $V(G)$  such that:
\begin{itemize}
\item $\bigcup_{i\in V(T)}X_i= V(G)$
 
\item for each edge $e=\{u, v\} \in E(G)$ there  exists an $i\in V(T)$ such that both $u$ and $v$ belong to $X_i$

\item for all $v\in V,$ the set of nodes $\{i\in V(T)\mid v \in X_i\}$ forms a connected subtree of $T$.
\end{itemize}

\noindent The {\em width} of a tree decomposition is $\max\{|X_{i}|\mid i\in V(T)\}-1$. The {\em treewidth} of a graph $G$ (denoted by $\tw(G)$) is the minimum width over all possible tree decompositions of $G$.

\paragraph{Concentric cycles.} 
Let $G$ be a graph embedded in the 
sphere $\Bbb{S}_{0}$ and let ${\cal D}=\{D_1,\ldots,D_r\},$ be a sequence of closed disks in $\Bbb{S}_{0}$. We call ${\cal D}$
{\em concentric} if  $D_{1}\subseteq D_{2}\subseteq \cdots\subseteq D_{r}$ and no point belongs to the boundary of two 
disks in ${\cal D}$. 
We call a sequence ${\cal C}=\{C_{1},\ldots,C_{r}\},$ $r\geq 2,$ of cycles of $G$ {\em concentric}
if there exists a concentric sequence of closed disks  ${\cal D}=\{D_1,\ldots,D_r\},$ such that $C_{i}$ is the boundary of $D_{i},$ $i\in\{1,\ldots,r\}$.  
For $i\in\{1,\ldots, r\},$  we set  $\bar{C}_{i}=D_{i},$ $\mathring{C}_{i}=\bar{C}_{i}\setminus C_{i},$ and  $\hat{C}_{i}=G\cap D_{i}$ (notice that $\bar{C}_{i}$ and $\mathring{C}_{i}$ are sets while $\hat{C}_{i}$ is a subgraph of $G$).  Given $i,j$ with $i\leq j-1,$ we denote by $\hat{A}_{i,j}$ the graph $\hat{C}_{j}\setminus \mathring{C}_{i}$.
Finally, given a $q\geq 1,$ 
we say that a vertex  set $R\subseteq V(G)$ is {\em $q$-dense} in ${\cal C}$ if, for every $i\in \{1,\ldots, r-q+1\},$ $V(\hat{A}_{i,i+q-1})\cap R\neq \emptyset$.

\paragraph{Railed annulus.} 
Let $ r\geq 2$ and $q \geq 1$ be two integers and let $G$ be a graph embedded on the sphere $\Bbb{S}_{0}$. 
A $(r,q)$-{\em railed annulus} in $G$ is a pair $({\cal C},{\cal W})$ such that ${\cal C}=\{C_{1},C_{2},\ldots ,C_{r}\}$ is a sequence of $r$ concentric 
cycles  that are all intersected by a sequence ${\cal W}$ of $q$ paths $W_{1},W_{2},\dots, W_{q}$ (called {\em rails}) 
in such a way that $\cupall{\cal W}\subseteq A_{1,r}$ and the intersection of a cycle and a rail is always connected, that is, it is a (possibly trivial) path (see Figure~\ref{railed_annulus} for an example).
\medskip

\begin{figure}[ht]
    \centering
    \includegraphics[width=0.7\textwidth]{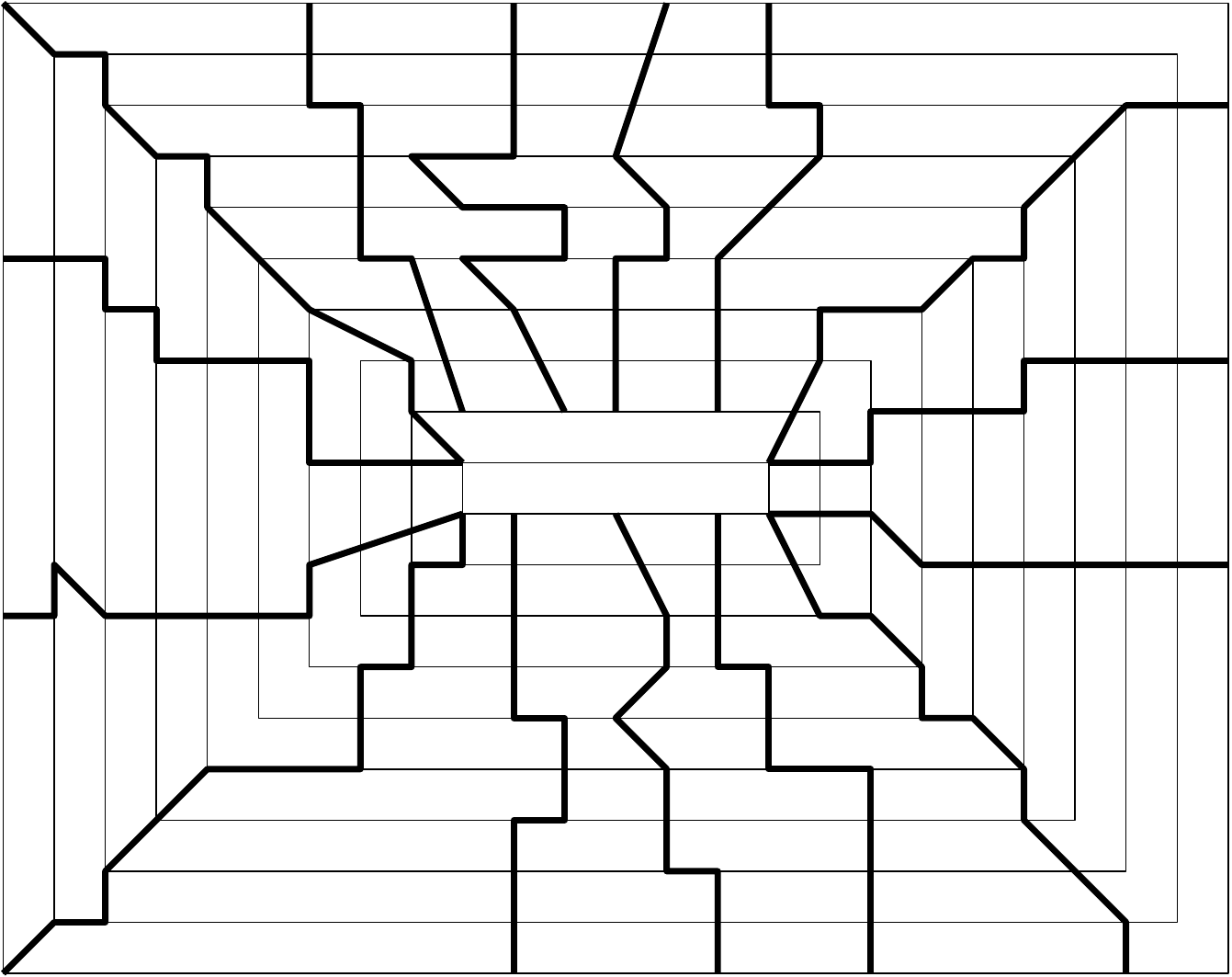}
    \caption{A (10,15)-railed annulus.}
    \label{railed_annulus}
\end{figure}

\medskip

\paragraph{Walls and subdivided walls.} Let $h$ be a integer and $h\geq 1$. A {\em wall of height $h$} is the graph obtained from a $((h+1) \times (2\cdot h+2))$-grid with vertices $(x,y),$ $x\in \{1, \ldots, 2\cdot h+4\},$ $y\in \{1, \ldots, h+1\},$ after the removal of the ``vertical" edges $\{(x,y),(x,y+1)\}$ for odd $x+y,$ and then the removal of all vertices of degree 1. We denote such a wall by $W_h$.
A {\em subdivided wall of height h} is a wall obtained from $W_h$ after replacing some of its edges by paths without common internal vertices 
(see Fig.~\ref{s-wall} for an example). 
The {\em perimeter} $P_W$ of a subdivided wall $W$ is the cycle defined by its boundary. Let $C_{2}=P_{W}$ and let $C_{1}$ be any cycle of $W$ that has no common vertices with $P_{W}$. Notice that ${\cal C}=\{C_{1},C_{2}\}$
is a sequence of concentric cycles in $G$. We define the {\em compass $K_W$ of $W$ in $G$}
as the graph $\hat {C}_{2}$.

\paragraph{Layers of a wall.} Let $W$ be a subdivided wall of height $h\geq 2$. The {\em layers} of $W$ are recursively defined as follows. The first layer, $J_1,$ of $W$ is its perimeter. For $i\in\{2, \ldots, \lfloor \frac{h}{2} \rfloor\},$ the $i$-th layer, $J_i,$ of $W$ is the perimeter of the subwall $W'$ obtained from $W$ by removing its perimeter and  repetitively removing occurring vertices of degree 1. We denote the {\em layer set} of $W$ by ${\cal J}_W=\{J_1, \ldots, J_{\lfloor \frac{h}{2} \rfloor}\}$
\medskip

\begin{figure}[ht]
    \centering
    \includegraphics[width=.92 \textwidth]{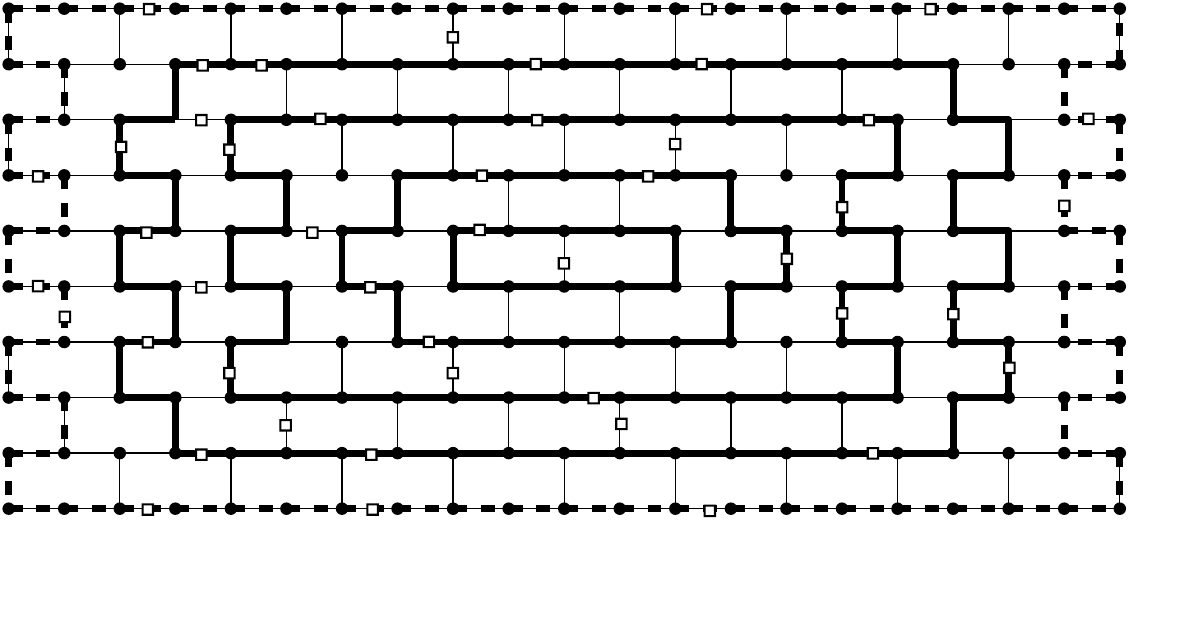}
    \vspace{-0.8cm}
    \caption{A  subdivided wall of height $9$. The white squares represent the subdivision vertices. The bold curves are its layers and the bold-dashed curve is its perimeter.}
    \label{s-wall}
\end{figure}

\medskip

\noindent Given a graph $G$ we denote by $\gw(G)$ the maximum integer $h$ for which $G$ contains a  subdivided wall of height $h$ as a subgraph.
The next lemma follows easily by combining results in~\cite{FominGT11cont}, \cite{GuT10impr}, and \cite{RS91}.
\begin{lemma}
\label{prop:propositionone}
 If $G$ is a planar graph, then $\tw(G)\leq 9\cdot \gw(G)+1$.
\end{lemma}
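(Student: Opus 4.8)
The statement ties together three classical facts about planar graphs, so the plan is essentially to assemble the right chain of inequalities. First I would recall the notion of *branchwidth* $\bw(G)$ and the fact, due to Robertson and Seymour, that for every graph $\bw(G) \le \tw(G)+1 \le \tfrac{3}{2}\bw(G)$ (from~\cite{RS91}). This lets me transfer any bound phrased in terms of branchwidth into a bound on treewidth at the cost of the factor $3/2$. Next I would invoke the planar grid/wall theorems: a planar graph of large branchwidth (equivalently, large treewidth) contains a large grid minor, and — because in planar graphs one can reroute a grid minor into an honest subdivided wall subgraph — it contains a large subdivided wall as a *subgraph*. The sharpened linear bounds here are exactly what~\cite{FominGT11cont} and~\cite{GuT10impr} provide: in a planar graph, branchwidth (or treewidth) at least roughly $c\cdot h$ forces a subdivided wall of height $h$. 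Combining these, a planar graph with $\gw(G) = h$ has treewidth bounded linearly in $h$.

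The key steps, in order, are: (1) fix $h = \gw(G)$, so $G$ has no subdivided wall of height $h+1$ as a subgraph; (2) apply the contrapositive of the planar wall-packing theorem from~\cite{FominGT11cont,GuT10impr} to conclude that the branchwidth (or treewidth) of $G$ is at most some explicit linear function $\alpha h + \beta$ of $h$; (3) use the Robertson–Seymour branchwidth/treewidth relation from~\cite{RS91} to pass from that bound to the clean statement $\tw(G) \le 9h + 1$, carefully tracking the constants so they collapse into the claimed $9\gw(G)+1$. Each of these is a citation plus bookkeeping; the lemma is really a repackaging, which is why the excerpt says it "follows easily."

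The main obstacle — though it is arithmetic rather than conceptual — is making the constants line up so that the final bound is *exactly* $9\gw(G)+1$ and not merely $O(\gw(G))$. One must be slightly careful about which parameter each cited theorem controls (branchwidth versus treewidth versus grid-minor size versus wall-subgraph height), since the excerpt's definition of $\gw$ demands a subdivided wall as a genuine subgraph, not a minor or topological minor in some weaker sense; the planarity is what makes the passage from minor to subgraph lossless (up to absorbing a constant). I would therefore state the intermediate bound in the form actually proved in~\cite{FominGT11cont,GuT10impr}, then apply Lemma-level inequalities $\bw \le \tw+1 \le \lfloor \tfrac{3}{2}\bw\rfloor$ and simplify. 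Since the paper only ever uses this lemma as a coarse tool (to guarantee that "small $\gw$" implies "small $\tw$" and vice versa up to constants), even a looser constant would suffice downstream, so I would not belabor optimality — I would just verify the displayed constant survives the composition of the cited bounds.
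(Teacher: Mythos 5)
Your proposal takes the same route as the paper, which offers no explicit argument and simply states that the lemma ``follows easily by combining results in'' the very three references you invoke: the linear planar branchwidth/grid--wall bounds of \cite{FominGT11cont} and \cite{GuT10impr} together with the Robertson--Seymour branchwidth--treewidth inequality from \cite{RS91}. Your outline is, if anything, more explicit than the paper's about the bookkeeping needed to make the constant $9\cdot\gw(G)+1$ come out, so there is nothing to correct relative to the paper's own treatment.
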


\section{Vital cyclic linkages}
\label{linkagelems}

\paragraph{Tight concentric cycles.}  Let $G$ be a graph embedded in the 
sphere $\Bbb{S}_{0}$.
A sequence  ${\cal C}=\{C_{1},\ldots,C_{r}\}$ of concentric cycles of $G$  is {\em tight} in $G,$ if
\begin{itemize}
\item  ${C_1}$ is {\em surface minimal}, i.e., there is no closed disk $D$ of $\Bbb{S}_0$ that  is properly contained in $\bar{C_1}$ and whose boundary is a cycle of $G$;
\item for every $i\in\{1,\ldots,r-1\},$ there is no closed disk $D$ such that $\bar{C}_i\subset {D}\subset \bar{C}_{i+1}$
and such that the boundary of $D$ is a cycle of $G$.
\end{itemize}
See Figure~\ref{tight_cycles} for a an example of the tightness definition.

\begin{figure}[ht]
    \centering
    \includegraphics[width=0.7\textwidth]{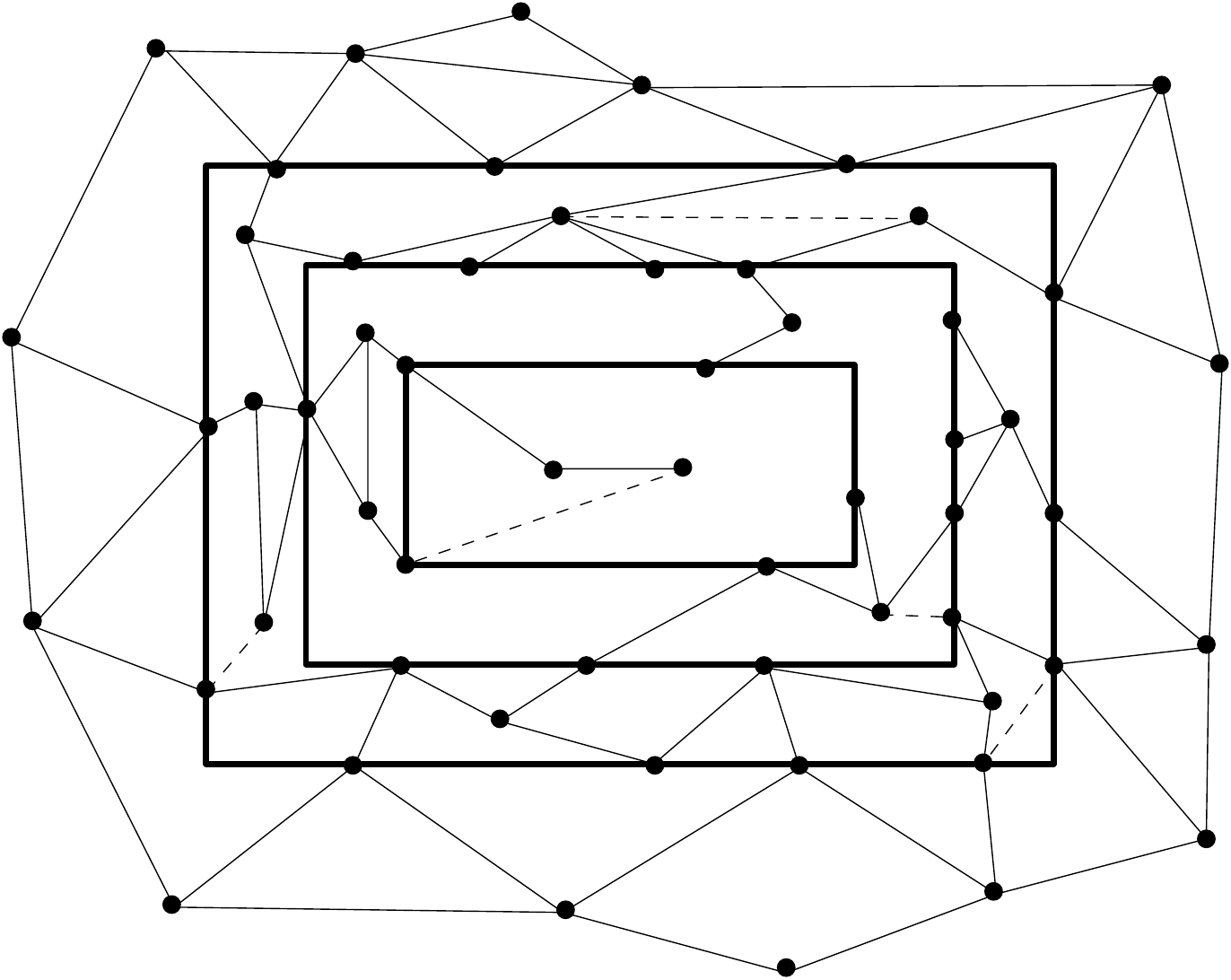}
    \caption{A sequence of three tight concentric cycles. The addition of any of the dashed edges makes the sequence non-tight.}
    \label{tight_cycles}
\end{figure}

\paragraph{Graph Linkages.} 
Let $G$ be a graph.  A  \emph{graph linkage} in $G$ is a pair  ${\cal L}=(H,T)$ 
such that $H$ is a subgraph of $G$ without isolated vertices and $T$ is a subset of the vertices of $H,$ called {\em terminals} of ${\cal L},$
such that every vertex of $H$ with degree different than 2 is contained in $T$.
The set ${\cal P}({\cal L}),$ which we call {\em path set of} the graph linkage ${\cal L},$ contains  all paths of $H$ whose endpoints are in $T$ and do not have any other vertex in $T$. The \emph{pattern} of $L$ is the graph $$(T,\big\{\{s,t\}\mid {\cal P}({\cal L})\text{ contains a path from $s$ to $t$ in $H$}\big\}).$$ Two graph linkages of $G$ are {\em equivalent} if they have the same pattern
and are {\em isomorphic} if their patterns are isomorphic.
A graph linkage ${\cal L}=(H,T)$ is called {\em weakly vital}  (reps.  {\em strongly vital}) in $G$ if $V(H) = V(G)$ and there is no other equivalent (resp. isomorphic) graph linkage that is different from ${\cal L}$.
Clearly, if a graph linkage ${\cal L}$ is strongly vital then it is also weakly vital.
We call a  graph linkage ${\cal L}$ {\em linkage} if its pattern has maximum degree 1 (i.e., it consists of a collection of paths), in which case we omit $H$ and refer to the linkage just by using ${\cal L}$.
We also call a graph linkage ${\cal L}$ {\em cyclic linkage} if its pattern is a cycle. For an example of distinct types of  cyclic linkages, 
see Figure~\ref{fig:Flin}.

Notice that there is a critical difference between equivalence and isomorphism of linkages. 
To see this, suppose that ${\cal L}=(C,T)$ is a cyclic linkage of a graph $G$ and let $A_{G}$ be the set of 
all cyclic linkages that are isomorphic to ${\cal L}$, while $B_{G}$ is the set of 
all cyclic linkages that are equivalent to ${\cal L}$. Notice that the cycles in the cyclic linkages of $A_{G}$
should meet the terminals in the same cyclic order. On the contrary 
the cycles of the cyclic linkages of $B_{G}$ may meet the terminals in any possible cyclic ordering. Consequently $A_{G}\subseteq B_{G}$.
For example, if ${\cal L}=(C,T)$ is the cyclic linkage of the graphs in Figure~\ref{fig:Flin}, then $|A_{G_{1}}|=1$, $|B_{G_{1}}|=1$,
$|A_{G_{2}}|=1$, $|B_{G_{2}}|=12$, $|A_{G_{2}}|=4$, $|B_{G_{3}}|=28$.

\begin{figure}[ht]
\begin{center}
\includegraphics[width=.8\textwidth]{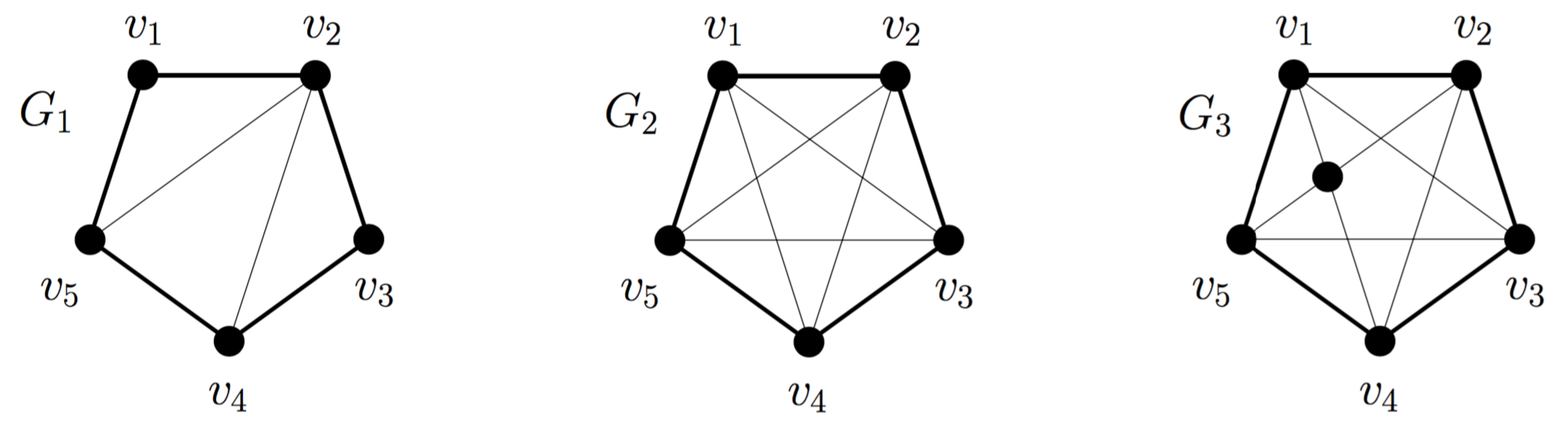}
\end{center}
\caption{Three graphs $G_{1}, G_{2}$, and $G_{3}$. In each graph the bold edges define the 
cycle $C=(\{v_{1},\ldots,v_{5}\},\{\{v_{1},v_{2}\},\ldots,\{v_{4},v_{5}\},\{v_{5},v_{6}\}\})$ where $T=V(C)$.
Consider the cyclic linkage ${\cal L}=\{C,T\}$ where $T=V(C)$.
${\cal L}$ is a weakly vital linkage in $G_{1}$ and $G_{2}$ while it is not a weakly vital linkage  in $G_{3}$.
Moreover, ${\cal L}$ is a strongly vital linkage in $G_{1}$ while it is not a strongly vital linkage  neither in  $G_{2}$  nor in $G_{3}$.}
\label{fig:Flin}
\end{figure}

\paragraph{CGL-configurations.} Let $G$ be a graph embedded on the 
sphere $\Bbb{S}_{0}$. Then we say that a pair ${\cal Q}= ({\cal C}, {\cal L})$ is a {\em CGL-configuration} of {\em depth $r$} if  $\mathcal{C} = \{C_1, \ldots,C_r\}$ is a sequence of concentric cycles in $G,$ ${\cal L}=(H,T)$ is a graph linkage  in $G,$ and $T\cap  V(\hat{C}_r)=\emptyset ,$ i.e., all vertices in the terminals of ${\cal L}$ are outside $\bar{C_r}$. The {\em penetration} of ${\cal L}$ in ${\cal C},$ $p_{\cal C}({\cal L}),$  is the number of cycles of ${\cal C}$ that are intersected by the paths of ${\cal L}$ (when ${\cal L}=(C,S)$ is cyclic we will sometimes refer to the penetration of ${\cal L}$ as the penetration of cycle $C$).
We say that ${\cal Q}$
is {\em touch-free} if for every path $P\in{\cal L},$ the number of  connected components
of $P\cap C_{r}$ is not $1$. See figure~\ref{CLG} for an example of a CGL-configuration.

\begin{figure}[ht]
    \centering
    \includegraphics[width=0.7\textwidth]{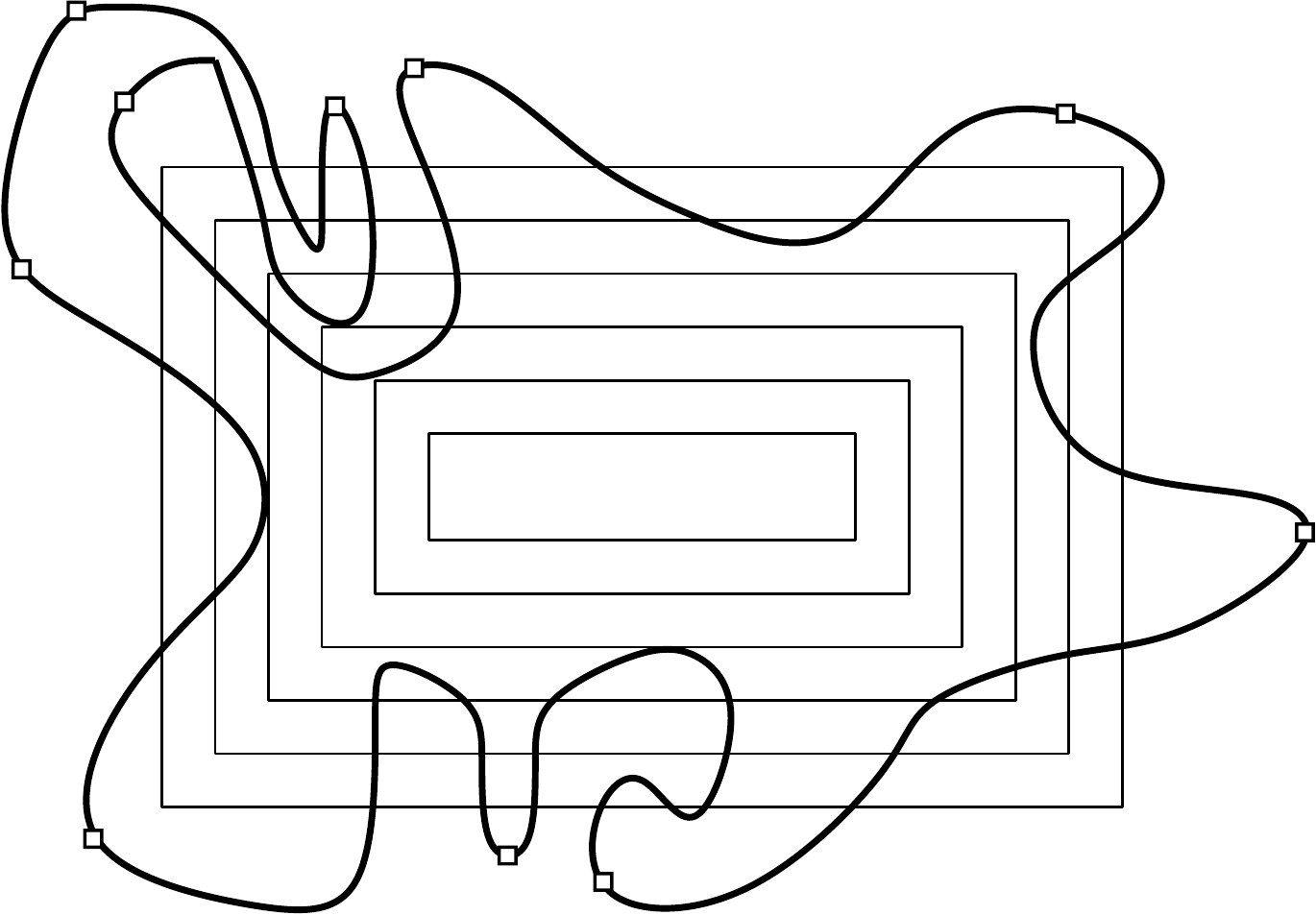}
    \caption{A CLG-configuration ${\cal Q}=({\cal C}, {\cal L})$ with ${\cal L}=(H,T)$. Here, ${\cal C}$ is a sequence of six concentric cycles, $H$ (the {\bf bold} curve) is a cycle (thus ${\cal L}$ is a cyclic linkage) and $T$ is represented by the set of squares. The penetration of ${\cal L}$ in ${\cal C}$ is $4$ and ${\cal Q}$ is touch-free.}
    \label{CLG}
\end{figure}

\paragraph{Cheap graph linkages.} Let $G$ be a graph embedded on the 
sphere $\Bbb{S}_{0},$  let 
${\cal C}=\{C_1,\ldots,C_r\}$ be a sequence of cycles in $G,$
and let  ${\cal L}=(H,T)$ be a graph linkage  
where $T\subseteq V(G \setminus \hat{C}_r)$ (notice that $({\cal C},{\cal L})$ is a  CGL-configuration). 
We define function $c$ which maps
graph  linkages of $G$ to non-negative  integers  such that
 $$c({\cal L}) = |E({\cal L}) \setminus \bigcup_{i \in \{1, \ldots, r\}}{E(C_{i})}|.$$

A graph linkage ${\cal L}$ of $G$ is {\em $\cal{C}$-strongly cheap} (resp. {\em $\cal{C}$-weakly cheap} ), if $T({\cal L}) \cap \hat{C_r} = \emptyset$ and there is no other isomorphic (resp. equivalent)
graph linkage  ${\cal L}'$  such that $c({\cal L}) > c({\cal L}')$. Obviously, if ${\cal L}$ is ${\cal C}$-strongly cheap then it is also ${\cal C}$-weakly cheap.

\paragraph{Tilted grids.}
Let $G$ be a graph. A {\em tilted grid} of $G$ 
is a pair ${\cal U}=({\cal X},{\cal Z})$ where ${\cal X}=\{X_{1},\ldots,X_{r}\}$ 
and ${\cal Z}=\{Z_{1},\ldots,Z_{r}\}$ 
are both sequences of $r\geq 2$ vertex-disjoint paths of $G$ such that 

\begin{itemize}
\item for each $i,j\in\{1,\ldots,r\}$ $I_{i,j}=X_{i}\cap Z_{j}$ is a (possibly 
edgeless) path of 
$G,$
\item for $i\in\{1,\ldots,r\}$ the subpaths $I_{i,1},I_{i,2},\ldots,I_{i,r}$ appear 
in this order in $X_{i}$,
\item for $j\in\{1,\ldots,r\}$ the subpaths $I_{1,j},I_{2,j},\ldots,I_{r,j}$ appear 
in this order in $Z_{j}$,
\item $E(I_{1,1})=E(I_{1,r})=E(I_{r,1})=E(I_{r,r})=\emptyset,$
\item the graph $G_{{\cal U}}^*$ taken from the graph
 $G_{{\cal U}}=(\bigcup_{i\in\{1,\ldots,r\}}X_{i})\cup (\bigcup_{i\in\{1,\ldots,r\}}Z_{i})$ after contracting all edges in $\bigcup_{(i,j)\in\{1,\ldots,r\}^{2}}I_{i,j}$ is isomorphic to the $(r\times r)$-grid.
\end{itemize}
We refer to the cardinality $r$ of ${\cal X}$ (or ${\cal  Z}$) as the {\em capacity} of ${\cal U}$.

\paragraph{Tidy tilted grids.}
Given a plane graph $G$ and a graph linkage ${\cal L}=(H,T)$ of $G$ we say that 
a tilted grid ${\cal U}=({\cal X},{\cal Z})$ of $G$ is an {\em ${\cal L}$-tidy tilted grid} of $G$ if $T\cap D_{\cal U}=\emptyset$ and 
$D_{\cal U}\cap {\cal L}=\cupall {\cal Z}$ where $D_{\cal U}$ is the closed interior of the perimeter of $G_{\cal U}$ (for an example see Figure 6).
%

\paragraph{From graph linkages  to linkages.}
Let $G$ be a graph and let ${\cal L}=(H,T)$ be a graph linkage of  $G$.
We denote by $G_{\cal L}$ the graph obtained by
subdividing all edges of $G$ incident to terminals and then removing the terminals.
Similarly, we define ${\cal L}^*=(H^*,T^*)$ so that 
$H^{*}$ is   the graph obtained by subdividing all edges incident to terminals, removing the terminals, and considering as terminals the subdivision vertices. 
Notice that ${\cal L}^{*}$ is  a linkage of $G_{\cal L}$.
Notice that if ${\cal L}$ is strongly vital then ${\cal L}^{*}$ is not necessarily strongly vital.
However, if ${\cal L}$ is weakly vital, then so is ${\cal L}^{*}$ (see Figure~\ref{linkage} for an example).

\begin{figure}[ht]
    \centering
    \includegraphics[width=0.8\textwidth]{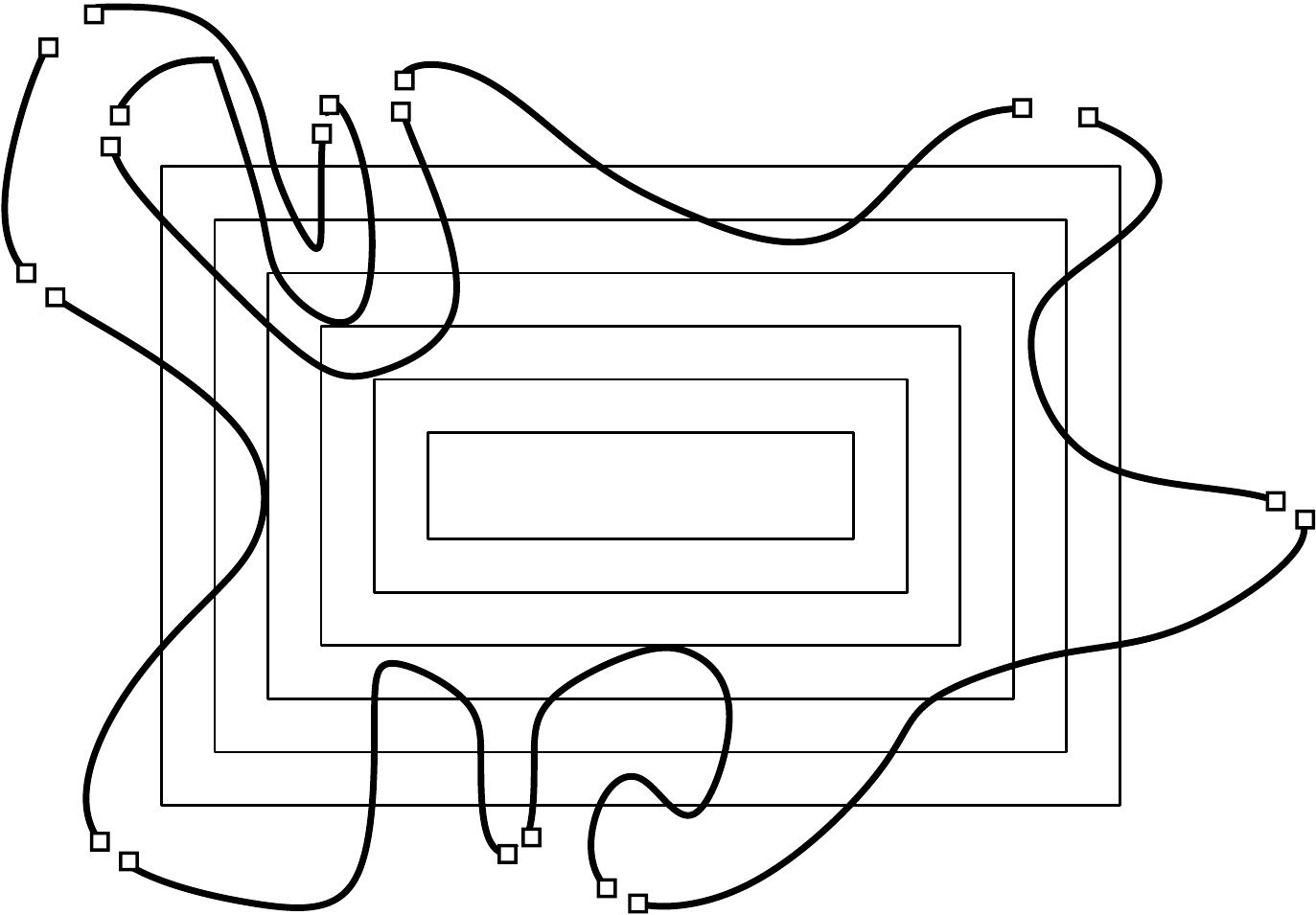}
    \caption{The linkage that corresponds to the cyclic linkage depicted in Figure~\ref{CLG}.}
    \label{linkage}
\end{figure}

%
%

\paragraph{Vertex dissolving.} Let $G$ be a graph and $v \in V(G)$ with $N_G(v)=\{u,w\}$. The operation of \emph{dissolving} $v$ in $G$ is the following: Delete $v$ from $G$ and add edge $\{u,w\}$ to $E(G)$, allowing the existence of multiple edges.
\medskip

The following proposition follows from the combination of Lemma~5, Lemma~6, and Observation~3 of~\cite{AdlerKKLST11tigh}.

\begin{proposition}
\label{prop_cd}
Let $G$ be a graph  embedded on the sphere $\Bbb{S}_0$ and let ${\cal Q}=({\cal C},{\cal L})$ 
 be a  touch-free CGL-configuration of $G,$ where ${\cal C}$ is tight in $G$ and ${\cal L}$ is a  ${\cal C}$-weakly cheap linkage whose penetration in ${\cal C}$ is at least $r$.
Then $G$ contains some ${\cal L}$-tidy tilted grid in $G$ of capacity at least $r/(4\cdot |{\cal P}({\cal L})|)$.
\end{proposition}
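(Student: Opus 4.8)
The plan is to reduce Proposition~\ref{prop_cd} to the machinery of \cite{AdlerKKLST11tigh} by passing from the cyclic (or more general) setting to the linkage setting where those results are stated. First I would note that the statement as given already assumes ${\cal L}$ is a \emph{linkage} (its pattern has maximum degree $1$), so no reduction via ${\cal L}^*$ is needed here; the point is simply to check that the hypotheses of the cited Lemma~5, Lemma~6, and Observation~3 are met. Concretely: tightness of ${\cal C}$ is exactly the hypothesis on the concentric cycles used in \cite{AdlerKKLST11tigh}; $\mathcal{C}$-weak cheapness of ${\cal L}$ plays the role of the ``cheap'' / minimality assumption there (minimizing $c({\cal L})=|E({\cal L})\setminus\bigcup_i E(C_i)|$ among all equivalent linkages is precisely what prevents the paths from doing anything wasteful inside the disk system); and the touch-free condition on ${\cal Q}$ guarantees that every path of ${\cal L}$ that enters $\bar{C_r}$ actually crosses each cycle it reaches an even number of times, so that a penetration of at least $r$ really produces $r$ ``honest'' crossing segments rather than tangential touches.

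The second step is the counting/pigeonhole that converts penetration into a tilted grid. Since $|{\cal P}({\cal L})|$ paths together penetrate at least $r$ cycles, and each cycle $C_i$ they penetrate is crossed by at least one of them, one path $P$ must cross at least $r/|{\cal P}({\cal L})|$ of the cycles; by the touch-free property each such crossing contributes at least two arcs of $P$ inside the annulus, and tightness forces consecutive cycles to be ``close'' in the sense needed to route the $Z$-paths of the tilted grid along $P$ while the $X$-paths are carved from the tight cycles $C_i$. This is exactly the mechanism of Lemma~5 and Lemma~6 of \cite{AdlerKKLST11tigh}: one extracts from the portion of $P$ inside the disk system, together with the cycles, a $(r'\times r')$-grid-like structure with $r'\geq r/(4|{\cal P}({\cal L})|)$ — the factor $4$ absorbing the loss from splitting each path into crossing arcs, from orientation/parity bookkeeping, and from discarding the topmost/bottommost cells to satisfy the corner condition $E(I_{1,1})=E(I_{1,r})=E(I_{r,1})=E(I_{r,r})=\emptyset$. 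Observation~3 of \cite{AdlerKKLST11tigh} then certifies that the contracted graph $G_{{\cal U}}^*$ is genuinely the $(r'\times r')$-grid.

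The third step is to verify the \emph{tidy} conditions, namely $T\cap D_{\cal U}=\emptyset$ and $D_{\cal U}\cap{\cal L}=\cupall{\cal Z}$. The first is immediate from the definition of a CGL-configuration: $T\cap V(\hat{C}_r)=\emptyset$, and the interior disk $D_{\cal U}$ of the constructed grid is contained in $\bar{C_r}$, hence disjoint from $T$. The second requires arranging the construction so that the $\mathcal{Z}$-paths are taken to be precisely the crossing arcs of the paths of ${\cal L}$ inside $D_{\cal U}$ and nothing of ${\cal L}$ escapes into $D_{\cal U}$ besides those arcs; here $\mathcal{C}$-weak cheapness is used once more, since a path of ${\cal L}$ wandering into $D_{\cal U}$ off the chosen arcs could be rerouted to a strictly cheaper equivalent linkage, contradicting weak cheapness — this is the content of the relevant part of Lemma~6 of \cite{AdlerKKLST11tigh}, and I would simply invoke it after checking the translation of hypotheses.

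I expect the main obstacle to be the bookkeeping in the second step: making the constants line up so that the capacity is at least $r/(4|{\cal P}({\cal L})|)$ rather than some larger divisor, and in particular being careful that the corner-emptiness requirement $E(I_{1,1})=E(I_{1,r})=E(I_{r,1})=E(I_{r,r})=\emptyset$ and the ``connected intersection'' requirements $I_{i,j}=X_i\cap Z_j$ can all be met simultaneously after the extraction. Since all three ingredients are already proved in \cite{AdlerKKLST11tigh}, the proof is really an exercise in matching definitions — tight cycles, weakly cheap linkage, touch-free configuration — to the hypotheses there and then quoting the conclusion; the only genuine content on our side is confirming that the notion of $\mathcal{C}$-weak cheapness defined via the cost function $c$ coincides with the minimality notion used in that paper, which I would state as a short remark before applying the proposition.
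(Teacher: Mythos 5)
Your proposal takes essentially the same route as the paper, which offers no proof of Proposition~\ref{prop_cd} at all beyond asserting that it ``follows from the combination of Lemma~5, Lemma~6, and Observation~3 of~\cite{AdlerKKLST11tigh}''; your sketch is simply an unpacking of that citation, matching tightness, weak cheapness, and touch-freeness to the hypotheses there and attributing the factor $4$ and the tidiness conditions to the cited machinery. Since the paper supplies no further detail, there is nothing to compare at the level of the actual extraction argument, and your reading of the hypotheses (in particular that ${\cal L}$ is already a linkage here, so no passage to ${\cal L}^*$ is needed inside this proposition) is consistent with how the paper later applies it.
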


\begin{lemma}
\label{lem_4x4grid}
 Let $G$ be a graph embedded on the sphere $\Bbb{S}_0$.
 If $G$ contains a strongly vital cyclic linkage ${\cal L}=(C,T),$ then $G$
 does not contain an ${\cal L}$-tidy tilted grid of capacity $4$.
 \end{lemma}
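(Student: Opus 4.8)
The plan is to argue by contradiction: suppose $G$ contains both a strongly vital cyclic linkage ${\cal L}=(C,T)$ and an ${\cal L}$-tidy tilted grid ${\cal U}=({\cal X},{\cal Z})$ of capacity $4$, and then exhibit a distinct cyclic linkage ${\cal L}'=(C',T)$ that is isomorphic to ${\cal L}$, contradicting strong vitality. The key point to exploit is the definition of ${\cal L}$-tidiness: the closed disk $D_{\cal U}$ bounded by the perimeter of $G_{\cal U}$ contains no terminals ($T\cap D_{\cal U}=\emptyset$), and the only part of the linkage $C$ that enters $D_{\cal U}$ is exactly $\cupall{\cal Z}$ — that is, inside the grid the cycle $C$ runs precisely along the ``vertical'' paths $Z_1,\dots,Z_4$ and does not use any portion of the ``horizontal'' paths $X_i$ except at the crossing intervals $I_{i,j}$. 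So $C$ traverses $D_{\cal U}$ as a bundle of internally-disjoint subpaths, each living on one of the $Z_j$'s, entering and leaving through the perimeter of the grid.

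First I would set up the combinatorial picture of how $C$ meets the grid. Each $Z_j$ is a subpath of $G$ that is entirely used by $C$ (by tidiness $D_{\cal U}\cap{\cal L}=\cupall{\cal Z}$, and $Z_j\subseteq D_{\cal U}$); since $C$ is a cycle and the $Z_j$ are vertex-disjoint, the portion of $C$ inside $D_{\cal U}$ consists of some subpaths each of which coincides with an entire $Z_j$ (or a subpath thereof reaching the perimeter). The $Z_j$'s connect the ``bottom'' side of the perimeter to the ``top'' side (this is where the conditions $E(I_{1,1})=E(I_{1,r})=E(I_{r,1})=E(I_{r,r})=\emptyset$ and the grid-isomorphism of $G_{\cal U}^*$ come in — they pin down the combinatorial structure so that $Z_1,\dots,Z_4$ really are four parallel vertical threads crossing four parallel horizontal threads $X_1,\dots,X_4$). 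The horizontal paths $X_i$ are available as ``reroutings'': the subpath of $X_i$ between $I_{i,j}$ and $I_{i,j+1}$ is a path in $G$ internally disjoint from $C$ (since $C\cap D_{\cal U}=\cupall{\cal Z}$ touches $X_i$ only inside the intervals $I_{i,\cdot}$), and its interior is disjoint from $T$.

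Next comes the rerouting itself. With capacity $4$ I have at least two ``interior'' vertical paths, say $Z_2$ and $Z_3$, and at least two interior horizontal paths, say $X_2$ and $X_3$, whose relevant connecting segments lie strictly inside $D_{\cal U}$ and hence avoid $T$ and avoid the rest of $C$. The idea is to take the two subpaths of $C$ running along $Z_2$ and $Z_3$ and swap how they are connected by splicing in horizontal segments of $X_2$ and $X_3$ — concretely, replace the portion of $Z_2$ (resp. $Z_3$) between rows $2$ and $3$ by the $X_2$-segment between columns $2$ and $3$ and the $X_3$-segment between columns $2$ and $3$, obtaining a ``crossover'' that reconnects the two threads into a cycle $C'$. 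One must check that $C'$ is again a single cycle through the same terminal set $T$ in the same cyclic order (so that ${\cal L}'$ is isomorphic, not merely equivalent, to ${\cal L}$) — because the swap is a local ``U-turn'' replacement that preserves the cyclic order in which the endpoints on the perimeter are visited — and that $C'\ne C$ as a subgraph, which holds because $C'$ uses at least one edge of $X_2$ or $X_3$ that $C$ does not use (here the $E(I_{i,j})=\emptyset$ corner conditions and the genuine grid structure guarantee such an edge exists, i.e. the horizontal segments are not degenerate). This produces the forbidden second isomorphic cyclic linkage.

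The main obstacle I expect is the last verification: making the rerouting precise enough that $C'$ is genuinely a cycle isomorphic to $C$ and genuinely different. One has to be careful that the swapped horizontal segments are non-trivial (otherwise $C'=C$), that they don't accidentally coincide with pieces of $C$ already used elsewhere (controlled by $D_{\cal U}\cap{\cal L}=\cupall{\cal Z}$ and vertex-disjointness within the grid), and that after the swap one still gets one cycle rather than two disjoint cycles — this depends on choosing the crossover on the two \emph{adjacent} interior threads and on the planarity/embedding so the ``braid'' closes up correctly. The reason capacity $4$ (rather than $3$) is needed is precisely to have two interior rows and two interior columns simultaneously, giving enough room for a clean, terminal-free crossover; I would make this count explicit and then the contradiction with strong vitality closes the proof.
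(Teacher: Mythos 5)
Your overall strategy coincides with the paper's: assume both objects exist and derive a contradiction with strong vitality by rerouting $C$ inside the grid to produce a second cyclic linkage on the same terminal set. Your setup is also sound: tidiness ($T\cap D_{\cal U}=\emptyset$ and $D_{\cal U}\cap{\cal L}=\cupall{\cal Z}$) correctly forces $C$ to traverse the disk $D_{\cal U}$ as the four vertical threads $Z_1,\dots,Z_4$, with the horizontal paths $X_i$ available as terminal-free detours. However, there is a genuine gap exactly at the point you yourself flag as the main obstacle, and the specific choice you propose to resolve it is the one that fails. The step you are missing is the one the paper does first: determining how the eight endpoints of the threads on the perimeter are paired up by the arcs of $C$ \emph{outside} the grid. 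Since these four arcs live in the complementary disk and must, together with the four threads, close into a single cycle, planarity forces (up to symmetry) the pairing top$_1$--top$_2$, top$_3$--top$_4$, bottom$_2$--bottom$_3$, bottom$_1$--bottom$_4$ (this is the paper's unique contraction type $H^*$). In particular, the bottom endpoints of the two interior threads $Z_2$ and $Z_3$ are already joined by an arc of $C$ outside the grid. Your proposed ``symmetric crossover'' replaces the middle portions of $Z_2$ and $Z_3$ by a top connector (along $X_2$) joining the upper halves and a bottom connector (along $X_3$) joining the lower halves; the bottom connector together with the external bottom$_2$--bottom$_3$ arc then closes into a short cycle of its own, and the result is two disjoint cycles rather than one.

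Moreover, this cannot be repaired merely by picking a different pair of columns for a symmetric crossover: for the forced external pairing, the only column pairs not directly joined by an external arc are $\{1,3\}$ and $\{2,4\}$, and a symmetric crossover between non-adjacent columns is not realizable by vertex-disjoint paths in the plane (the connectors would have to cross the intermediate thread). What is actually needed is an \emph{asymmetric} rerouting that changes the matching of the eight perimeter endpoints realized inside the grid to a different non-crossing matching whose union with the external arcs is again a single $8$-cycle --- for instance keeping column $4$ intact, joining top$_2$ to top$_3$ along the first row, bottom$_1$ to bottom$_2$ along the last row, and routing a path from top$_1$ to bottom$_3$ through the interior of the grid. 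This is the content of the paper's Figure~7, and it is only available once the external pairing has been pinned down; capacity $4$ is what guarantees enough interior rows and columns, and the corner conditions $E(I_{1,1})=E(I_{1,4})=E(I_{4,1})=E(I_{4,4})=\emptyset$ together with the grid-contraction condition guarantee that the rerouted cycle uses at least one edge not in $C$. Without the structure analysis and with the symmetric crossover as written, your argument does not go through.
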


\begin{proof} Assume that ${\cal L}=(C,T)$ is a strongly vital cyclic linkage in $G$ and that $\Gamma$ is an ${\cal L}$-tidy tilted grid of capacity $4$ in $G$. Let also $\Gamma_4$ be the $(4\times 4)$-grid. Observe that $\Gamma_4$ is the graph that  we get after contracting all edges of $\Gamma$ with at least one endpoint of  degree $2$. We contract $\Gamma$ to $\Gamma_4$ in $G$ and let $G'$ be the resulting graph.\\
Let $V(\Gamma_4)=\{v_{ij} \mid i,j\in\{1, \ldots 4\}\}$ and $E(\Gamma_4)=\{\{v_{ij},v_{i'j'}\} \mid |i-i'|+|j-j'|=1\}$. Observe that $\Gamma_4$ is also an ${\cal L}$-tidy tilted grid of capacity $4$ in $G'$ and that ${\cal L}$ is also strongly vital in $G'$ (if not, then it was not strongly vital in $G$).
Let $H=\Gamma\cup C$ and $H'$ be the contraction of $H$ that we get after contracting all edges of $H$ whose ends have both degree $2$. \\
Let also $H^* = \Gamma_4\cup P_1\cup P_2 \cup P_3\cup P_4$, where for every $i\in\{1,2,3,4\}$, each $P_i$ is a path of length $2$ such that $P_1$ connects $v_{11}$ with $v_{12}$, $P_2$ connects $v_{13}$ with $v_{14}$, $P_3$ connects $v_{41}$ with $v_{44}$ and $P_4$ connects $v_{42}$ with $v_{43}$  (i.e. for every cyclic linkage ${\cal L}=(C,T)$ if we contract all edges of $H=\Gamma\cup C$ whose ends have degree $2$, we get a graph isomorphic to $H^*$ which is a $(4 \times 4)$-grid in addition to some paths that are subgraphs of $C$).\\
It is not hard to confirm that for every possible $H$, its corresponding contraction, $H'$, is isomorphic to $H^*$.
It remains to show that there exists a cyclic linkage ${\cal L'}=(C',T)$ in $G'$, where $C'$ is different from $C$. As $H^*$ is a unique graph (up to isomorphism), a way of rerouting $C$ (in order to obtain a different cyclic linkage) is given in Figure~\ref{4x4}.
\end{proof}

\begin{lemma}
\label{lem:lemma2.3}
Let $G$ be a  graph embedded on the sphere $\Bbb{S}_0$ that is the union of $r\geq 2$ concentric cycles  ${\cal C}=\{C_{1},\ldots,C_{r}\}$ 
and  one more cycle $C$ of $G$. Assume that  ${\cal C}$ is tight in $G,$  $T\cap  V(\hat{C}_r)=\emptyset $, the cyclic linkage ${\cal L}=(C,T)$
is strongly vital in $G$, and its penetration in ${\cal C}$ is $r$. Then $r\leq 16\cdot |T|-1$.
\end{lemma}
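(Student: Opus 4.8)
The plan is to combine the structural extraction machinery already set up in the section with the non-existence result of Lemma~\ref{lem_4x4grid}. Concretely, suppose toward a contradiction that $r \geq 16\cdot|T|$. I would first arrange that the hypotheses of Proposition~\ref{prop_cd} apply to a suitable CGL-configuration built from $({\cal C},{\cal L})$. We are given that ${\cal C}$ is tight in $G$, that $T\cap V(\hat C_r)=\emptyset$, and that ${\cal L}=(C,T)$ is a strongly vital cyclic linkage with penetration exactly $r$ in ${\cal C}$. Strong vitality implies weak vitality, and weak vitality of ${\cal L}$ should give that $C$ is ${\cal C}$-weakly cheap (a cheaper equivalent linkage would be a different equivalent graph linkage, contradicting weak vitality, after checking the edge-count bookkeeping hidden in the definition of $c$); so the ``$\cal C$-weakly cheap'' hypothesis of Proposition~\ref{prop_cd} is met. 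The one hypothesis that needs care is \emph{touch-freeness}: a priori a path of the linkage could intersect the outermost penetrated cycle in a single connected component. I would handle this by passing to an appropriate subfamily of ${\cal C}$ — discarding a bounded number of outermost cycles so that the retained innermost cycles are all ``properly crossed'' — at the cost of decreasing the available penetration by a small additive constant. (This is the standard touch-free-cleanup step; the precise constant lost here is what ultimately fixes the factor $16$ and the $-1$.)

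Next I would invoke Proposition~\ref{prop_cd}: since $({\cal C}',{\cal L})$ is a touch-free CGL-configuration with ${\cal C}'$ tight, ${\cal L}$ weakly cheap, and penetration at least $r' \geq r - O(1)$, the graph $G$ contains an ${\cal L}$-tidy tilted grid of capacity at least $r'/(4\cdot |{\cal P}({\cal L})|)$. Because ${\cal L}=(C,T)$ is cyclic, its path set ${\cal P}({\cal L})$ consists of exactly $|T|$ subpaths of the cycle $C$, so $|{\cal P}({\cal L})| = |T|$. Hence the capacity is at least roughly $r/(4|T|)$. If $r \geq 16|T|$ then, after accounting for the constant lost in the touch-free cleanup, this capacity is at least $4$, i.e., $G$ contains an ${\cal L}$-tidy tilted grid of capacity $4$.

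Finally, this contradicts Lemma~\ref{lem_4x4grid}: a graph containing a strongly vital cyclic linkage ${\cal L}=(C,T)$ cannot contain an ${\cal L}$-tidy tilted grid of capacity $4$. Therefore $r \leq 16|T| - 1$, as claimed. I expect the main obstacle to be the touch-free reduction and the exact arithmetic that turns ``capacity $\geq r'/(4|T|)$'' into ``$r \leq 16|T|-1$'': one must verify that passing from ${\cal C}$ to the subfamily ${\cal C}'$ preserves tightness and the disjointness condition $T\cap V(\hat C_{r'})=\emptyset$, that ${\cal L}$ stays strongly vital and weakly cheap with respect to ${\cal C}'$, that $|{\cal P}({\cal L})| = |T|$ uses that the pattern of ${\cal L}$ is a single cycle (so $|T|\geq 3$ and the count of maximal $T$-free subpaths equals $|T|$), and that the tilted grid produced by Proposition~\ref{prop_cd} is genuinely ${\cal L}$-tidy in the precise sense Lemma~\ref{lem_4x4grid} requires. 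A subtle point is the $-1$: it should come from the fact that with penetration exactly $r$ one may only guarantee a touch-free configuration of depth $r-1$ (shaving the outermost penetrated cycle), so that $16|T| - 1$ cycles already force capacity-$4$, while $16|T|$ would be needed if penetration $r$ translated losslessly.
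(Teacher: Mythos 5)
Your overall strategy is the paper's: assume $r\geq 16\cdot|T|$, derive cheapness from vitality, apply Proposition~\ref{prop_cd} to extract an ${\cal L}$-tidy tilted grid of capacity $4$, and contradict Lemma~\ref{lem_4x4grid}. The genuine gap is in the one step you yourself flag as the main obstacle, the touch-free cleanup, and the fix you propose would not work. Nothing bounds by a ``small additive constant'' the number of truncated configurations that fail to be touch-free: each failure is witnessed by a path of ${\cal P}({\cal L})$ that meets the current outermost cycle in a single component and does not reach the next cycle inward, so up to $|{\cal P}({\cal L})|=|T|$ consecutive levels can fail. Discarding up to $|T|$ cycles while keeping the denominator $4\cdot|{\cal P}({\cal L})|=4|T|$ fixed in the capacity bound only yields something like $r\leq 17|T|$, not $16|T|-1$. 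The missing idea is that the denominator shrinks in tandem with the numerator: whenever a level fails to be touch-free, the witnessing path can be stripped of a terminal, so after $j$ peeling steps one reaches a touch-free configuration with penetration $r-j$ and at most $|T|-j$ paths. Then Proposition~\ref{prop_cd} together with Lemma~\ref{lem_4x4grid} forces $(r-j)/\bigl(4(|T|-j)\bigr)<4$, i.e.\ $r<16|T|-15j\leq 16|T|$, whence $r\leq 16|T|-1$. Note also that the $-1$ comes from the strictness of ``capacity $<4$'', not, as you suggest, from shaving one extra cycle; your accounting of that point would give $r\leq 16|T|$, off by one.

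Two secondary points. Proposition~\ref{prop_cd} is stated for a \emph{linkage} (pattern of maximum degree one), not for a cyclic graph linkage, so it cannot be applied to ${\cal L}$ directly: the paper first passes to ${\cal L}^{*}$ in $G_{\cal L}$ (splitting each terminal into subdivision vertices) and checks that weak vitality, tightness and weak cheapness survive this translation. Your derivations of ${\cal C}$-weak cheapness from weak vitality (uniqueness makes ${\cal L}$ trivially the cheapest equivalent linkage) and of $|{\cal P}({\cal L})|=|T|$ for a cyclic pattern are correct and match the paper.
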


\begin{proof}
Let $\sigma: {\cal P}({\cal L})\rightarrow T$ be such
that $\sigma$ is a bijection that maps each path of ${\cal P}({\cal L})$ to one of its endpoints.
For every $i\in\{1,\ldots,r\},$ we 
define ${\cal Q}^{(i)}=({\cal C}^{(i)},{\cal L}^{(i)})$
where ${\cal C}^{(i)}=\{C_{1},\ldots,C_{i}\}$
and ${\cal L}^{(i)}=(C,T^{(i)})$ where \[T^{(i)}=T\setminus \{\sigma(P)\mid P\in P({\cal L}) \mbox{ and } P\cap\hat{C}_{i}=\emptyset\}.\] 

Notice that if some ${\cal Q}^{(i+1)}$ is not touch-free, then $T^{(i)} \leq T^{(i+1)} -1$ (as, by the definition of touch-free configurations, there exists at least one path $P$ in ${\cal P}({\cal L})$ such that $P \cap \hat{C}_{i+1} \neq \emptyset$ but $P \cap \hat{C}_i = \emptyset$).
In the trivial case where every ${\cal Q}^{(i)}$ is not touch-free we derive easily that $r\leq|T|$ and we are done. Otherwise, let ${\cal Q}'=({\cal C}',{\cal L}')$ be the 
touch-free CGL-configuration in $\{{\cal Q}^{(1)},\ldots,{\cal Q}^{(r)}\}$ of the 
highest index, say $i$ (as we excluded the trivial case we have that $i \geq 1$). 
Certainly, ${\cal C}'={\cal C}^{(i)}$ and ${\cal Q}'$ is tight in $G$. Moreover,  ${\cal L}'$ is strongly vital in $G$.
From Lemma~\ref{lem_4x4grid}, $G$ does not contain an ${\cal L}'$-tidy
tilted grid of capacity $4$.  Thus,  $G_{\cal L}$ as well does not  contain an ${\cal L}'^{*}$-tidy (remember how a linkage ${\cal L}^{*}$ is created from a graph linkage ${\cal L}$ after the ``duplication" of the terminals of ${\cal L}$)
tilted grid of capacity $4$.
Recall now that, as ${\cal L}'$ is strongly vital in $G$ it is also weakly vital in $G$ and therefore 
${\cal L}'^{*}$ is weakly vital in ${G}_{{\cal L}'}$. 
Notice  also that ${\cal Q}'^{*}=({\cal C}',{\cal L}'^*)$ is a CGL-configuration of $G_{{\cal L}'}$ where 
${\cal C}'$ is  tight in $G_{{\cal L}'}$. As ${\cal L}'^{*}$ is weakly vital in ${G}_{{\cal L}'},$ then,  by its uniqueness,  ${\cal L}'^{*}$ is ${\cal C}'$-weakly cheap.
Recall that  the penetration of ${\cal L}'$ in ${\cal C}'$ is $r-(i-1)$ and so is the penetration of 
${\cal L}^{\prime *}$ in ${\cal C}'$.
 As ${\cal Q}',$ and therefore ${\cal Q}'^{*}$ as well,  is touch-free we can apply Proposition~\ref{prop_cd} and obtain that  $G_{{\cal L}'}$ contains some ${\cal L}'^{*}$-tidy tilted grid of capacity at least $(r-(i-1))/(4\cdot |{\cal P}({\cal L}'^{*})|) \leq (r-(i-1))/(4 (|{\cal P}({\cal L})|-(i-1))$. 
We derive that $(r-(i-1))/(4 (|{\cal P}({\cal L})|-(i-1))< 4,$ therefore  
$r\leq 16\cdot |{\cal P}({\cal L})|-15(i-1)$
which implies that $r \leq 16\cdot |T| -1$.
\end{proof}
\medskip

\begin{figure}[ht]
    \centering
    \includegraphics[width=0.598\textwidth]{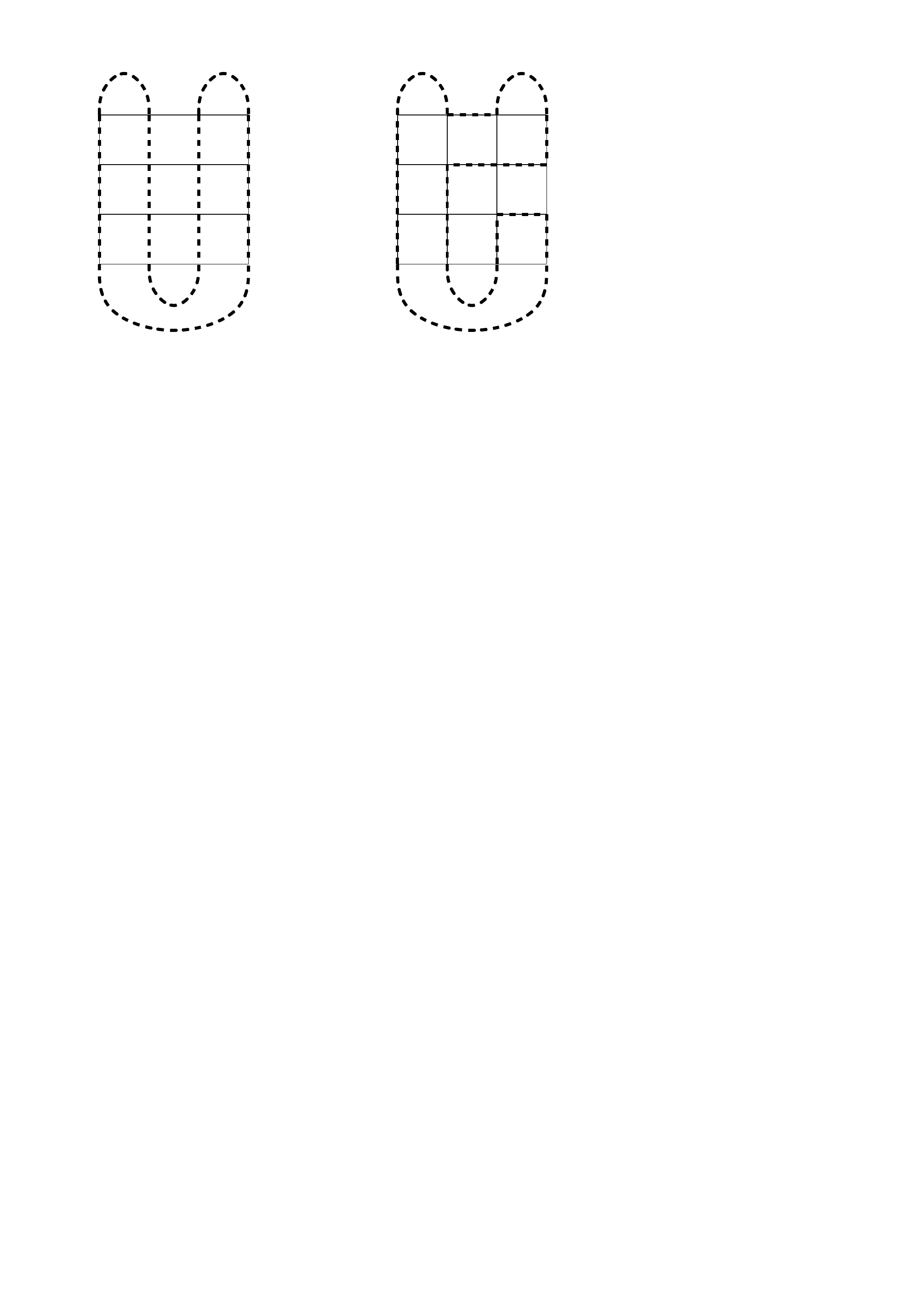}
    \caption{On the left, a simplified ${\cal L}$-tidy $(4\times4)$-grid (corresponding to graph $H^*$) and on the right, a rerouting of the cycle of ${\cal L}$ in the grid.}
    \label{4x4}
\end{figure}

\noindent A corollary of Lemma~\ref{lem:lemma2.3},
with independent combinatorial interest, is the following.

\begin{corollary}
If a plane graph $G$ contains a strongly vital  cyclic linkage  ${\cal L}=(C,T),$
then $\tw(G)=O(|T|^{3/2})$.
\end{corollary}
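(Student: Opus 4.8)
The plan is to bound $\tw(G)$ by exhibiting, inside any large-treewidth plane graph $G$ containing the strongly vital cyclic linkage ${\cal L}=(C,T)$, a tight concentric cycle sequence whose depth is penetrated by $C$, and then to invoke Lemma~\ref{lem:lemma2.3} to get a contradiction once the depth exceeds $16|T|-1$. First I would recall, via Lemma~\ref{prop:propositionone}, that $\tw(G)=O(\gw(G))$, so it suffices to bound the largest subdivided wall height $h$ in $G$ by $O(|T|^{3/2})$. From a subdivided wall of height $h$ one extracts its layer set ${\cal J}_W$, a sequence of $\Theta(h)$ concentric cycles; by a standard ``uncrossing'' / surface-minimality argument one can replace these by a \emph{tight} concentric sequence ${\cal C}=\{C_1,\ldots,C_r\}$ with $r=\Omega(h)$ living in the compass of $W$. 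The issue is that the terminals $T$ must lie outside $\hat{C}_r$ and that $C$ must actually \emph{penetrate} all $r$ cycles; I would handle the first point by discarding the $O(|T|)$ layers that come close to a terminal (each terminal can ``spoil'' only boundedly many consecutive layers), keeping $r=\Omega(h)-O(|T|)$ cycles, all contained in a terminal-free disk.

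The second point — forcing $C$ to penetrate deeply — is where the real work lies. Here I would argue by contradiction on the structure: if $C$ penetrates only $p$ of the cycles of the tight sequence, then inside the innermost $r-p$ cycles there is a large nested family of cycles of $G$ that $C$ avoids entirely; but since ${\cal L}$ is strongly vital, $V(C)=V(G)$, so every vertex of $G$ — in particular every vertex strictly inside $C_1$ — lies on $C$, and one shows this is incompatible with having many concentric cycles of $G$ enclosing a nonempty region untouched by $C$ unless $p$ is already large. More carefully, I expect the clean route is: take the tight terminal-free sequence ${\cal C}$ of depth $r$; if $p_{\cal C}(C)\ge r' := r$ we are in the hypothesis of Lemma~\ref{lem:lemma2.3} applied to the subgraph $G' = (\cupall {\cal C})\cup C$ (noting ${\cal C}$ is tight in $G'$ and the penetration there is still $r$), giving $r\le 16|T|-1$; hence $h=O(|T|)$ and in fact $\tw(G)=O(|T|)$, which is even stronger. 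The $|T|^{3/2}$ in the statement must therefore come from a weaker but more elementary extraction where one cannot guarantee the tight sequence is entirely penetrated — instead one only gets that $C$ crosses $\Omega(r/|T|)$ of the concentric cycles (roughly, the $|T|$ ``strands'' of the linkage can each be made to avoid a $1/|T|$ fraction), so Lemma~\ref{lem:lemma2.3} yields $r/|T| = O(|T|)$, i.e.\ $r=O(|T|^2)$ and $h=O(|T|^2)$, giving $\tw(G)=O(|T|^2)$; squeezing this down to $O(|T|^{3/2})$ requires balancing the number of layers sacrificed to terminals against the penetration fraction.

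Concretely, the steps I would carry out, in order: (1) assume $\tw(G)$ large, get a subdivided wall $W$ of height $h=\Omega(\tw(G))$ and its compass $K_W$; (2) from the layers of $W$ produce, by successive disk-uncrossing, a tight concentric sequence ${\cal C}=\{C_1,\ldots,C_m\}$ in $K_W$ with $m=\Omega(h)$; (3) delete the $O(|T|)$ layers near terminals to obtain a terminal-free tight subsequence of depth $m-O(|T|)$, so that ${\cal L}$ restricted there satisfies $T\cap V(\hat C_m)=\emptyset$; (4) show the cyclic linkage $(C,T)$ is still strongly vital in $G'=(\cupall {\cal C})\cup C$ — the delicate ``restriction preserves vitality'' step, using that any rerouting in $G'$ extends to one in $G$; (5) argue $C$ penetrates at least a $\Omega(1/|T|)$-fraction, say $r$, of the remaining cycles (a counting argument on how $|{\cal P}({\cal L})|\le |T|$ paths can evade nested cycles), and apply Lemma~\ref{lem:lemma2.3} to conclude $r\le 16|T|-1$; (6) unwind: $h = O(|T|\cdot r)=O(|T|^2)$, or with the sharper balancing $O(|T|^{3/2})$, hence $\tw(G)=O(|T|^{3/2})$ by Lemma~\ref{prop:propositionone}. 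The main obstacle is step (4)–(5): ensuring that passing to the subgraph spanned by the concentric cycles and $C$ preserves strong vitality (a rerouting in the subgraph is a rerouting in $G$, but one must check $V$ of the subgraph is still covered and that the subgraph is the ``right'' ambient graph), and quantifying exactly how shallowly a strongly vital $C$ can penetrate a tight nested family — the penetration-versus-$|T|$ tradeoff is what pins down the exponent $3/2$ rather than $2$.
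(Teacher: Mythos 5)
You have the right ingredients (reduce to wall height via Lemma~\ref{prop:propositionone}, find a terminal-free tight concentric family, use $V(C)=V(G)$ from strong vitality to force full penetration, restrict to $\cupall{\cal C}\cup C$ while preserving strong vitality, and apply Lemma~\ref{lem:lemma2.3}), but the step that actually produces the exponent $3/2$ is missing, and the substitute you offer in step (3) is broken. You propose to take the layers of a single wall and ``delete the $O(|T|)$ layers near terminals'' to get a terminal-free nested subsequence of depth $m-O(|T|)$. The hypothesis of Lemma~\ref{lem:lemma2.3} is that $T\cap V(\hat{C}_r)=\emptyset$, i.e.\ \emph{no terminal lies anywhere in the closed disk bounded by the outermost cycle} -- not merely on the cycles themselves. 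A terminal sitting in the central region of the wall lies inside \emph{every} layer, so no subsequence of the layers of that wall can ever satisfy this condition; discarding layers cannot help. This is exactly why your first route, which would give the stronger bound $\tw(G)=O(|T|)$, cannot be carried out, and why you are left guessing where the $3/2$ comes from (your fallback via a ``penetration fraction'' yielding $O(|T|^2)$ and then ``balancing'' is not an argument).

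The correct source of the exponent is two-dimensional packing: a wall of height $h$ contains $\Omega((h/s)^2)$ vertex-disjoint subwalls of height $s$. If $(h/s)^2>|T|$, some subwall has a terminal-free compass; its $\lfloor s/2\rfloor$ layers give (after the standard tightening) a tight concentric sequence with $T\cap V(\hat{C}_r)=\emptyset$, and since $C$ is Hamiltonian it visits a vertex in $\mathring{C}_1$, so its penetration is the full depth $r$. Your restriction step (4) is fine as stated: any isomorphic rerouting in $G'=\cupall{\cal C}\cup C$ is one in $G$, and $V(G')=V(C)$ because $C$ is Hamiltonian, so strong vitality passes to $G'$; your step (5) is then unnecessary. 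Lemma~\ref{lem:lemma2.3} gives $s=O(|T|)$, and combining with $(h/s)^2\le |T|$ yields $h=O(|T|\cdot\sqrt{|T|})=O(|T|^{3/2})$, whence the claimed treewidth bound. Without replacing your step (3) by this packing argument the proof does not go through.
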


\noindent Notice that, according to what is claimed in~\cite{AdlerKKLST11tigh}, we cannot restate 
the above corollary for weakly vital linkages, unless we change the bound to be an exponential one.
That way, the fact that treewidth is (unavoidably, due to~\cite{AdlerKKLST11tigh}) exponential to the number of terminals for 
(weakly) vital linkages  is caused by the fact that the ordering of the terminals is predetermined.

\begin{lemma}
\label{lem:cheap2vital}
Let $G$ be a  graph embedded on the sphere $\Bbb{S}_0$ that is the union of $r$ concentric cycles  ${\cal C}=\{C_{1},\ldots,C_{r}\}$ 
and  a Hamiltonian cycle $C$ of $G$. 
Let also   $T\cap  V(\hat{C}_r)=\emptyset $.
If ${\cal L}=(C,T)$ is ${\cal C}$-strongly cheap
then ${\cal L}$ is a  strongly  vital cyclic linkage in $G$.
\end{lemma}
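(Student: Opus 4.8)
\emph{Setup.} Lemma~\ref{lem:cheap2vital} claims two things: that $V(C)=V(G)$, and that no cyclic linkage $(C',T)$ of $G$ with $C'\neq C$ is isomorphic to ${\cal L}=(C,T)$. The first is immediate, since $C$ is by hypothesis a Hamiltonian cycle of $G$. For the second I would argue by contradiction. Suppose $(C',T)$ is a cyclic linkage of $G$, isomorphic to ${\cal L}$, with $C'\neq C$; we may take $C'$ to be a single (subdivided) cycle of $G$ through all of $T$, and among all such competitors we fix one with $c((C',T))$ minimum. Since ${\cal L}$ is ${\cal C}$-strongly cheap, $c((C',T))\geq c({\cal L})$. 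First note $E(C')\setminus E(C)\neq\emptyset$: otherwise $E(C')\subseteq E(C)$, and since a connected $2$-regular subgraph of a cycle is the whole cycle, $C'=C$, a contradiction. Fix $e\in E(C')\setminus E(C)$; as $G=C\cup C_1\cup\cdots\cup C_r$ there is an index $j$ with $e\in E(C_j)$, and since $\bar C_j\subseteq\bar C_r$ while $T\cap V(\hat C_r)=\emptyset$, the disk $\bar C_j$ contains no terminal of ${\cal L}$.

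\emph{Main step.} The idea is to build, from $C$ and $C'$, a third cyclic linkage $(C'',T)$ of $G$ which is again isomorphic to ${\cal L}$ but with $c((C'',T))<c({\cal L})$, contradicting ${\cal C}$-strong cheapness; note that ``isomorphic'' only asks that $C''$ be some cycle through all of $T$, imposing no constraint on the cyclic order, which gives considerable freedom. Here planarity enters: the Hamiltonian cycle $C$ bounds two closed disks of $\mathbb{S}_0$, and every edge of $G$ off $C$ --- in particular every edge of each $C_i$ not on $C$, and the chord $e$ --- is a chord lying inside one of those two disks. Drawing $C\cup C'$ on the sphere, one isolates an \emph{innermost} bigon $\Delta$, bounded by an arc $A$ of $C$ and an arc $A'$ of $C'$, whose interior contains no terminal and no further part of $C\cup C'$; the fact that $C'\neq C$ guarantees such a bigon exists. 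Swapping $A$ for $A'$ in $C$ (or $A'$ for $A$ in $C'$) produces a cycle through $T$, and the change in $c(\cdot)$ is the difference between the numbers of off-${\cal C}$ edges on $A'$ and on $A$ (respectively its negative). If those numbers differ, one of the two re-routings is a strictly cheaper competitor and we are done; if they are equal, one uses that $\bar C_j$ carries no terminal and that $C$, being Hamiltonian, spans the interior of every disk $\bar C_i$, so as to push the detour additionally along $C_j$, whose edges do not count towards $c(\cdot)$, thereby strictly decreasing the count.

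\emph{Main obstacle.} The delicate part is precisely this last topological argument: selecting the correct innermost bigon, controlling how $C$, $C'$ and a given concentric cycle $C_j$ can interleave on the sphere (note that $\bar C_j$ need not lie inside one of the two disks of $C$), and verifying that the re-routing does not create new off-${\cal C}$ edges elsewhere, disconnect the cycle, or drop a terminal. The reduction in the first paragraph and the final appeal to the definitions of ${\cal C}$-strong cheapness and strong vitality are routine; I would therefore concentrate the effort on a clean ``innermost bigon'' argument, phrasing it if necessary as an induction on $|E(C')\setminus E(C)|$ or on $c((C',T))-c({\cal L})$.
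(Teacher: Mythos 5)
There is a genuine gap: your ``Main step'' is a plan rather than a proof, and you flag this yourself --- the innermost-bigon surgery, the control of how $C$, $C'$ and $C_j$ interleave on the sphere, and the final ``push the detour along $C_j$'' are all left unverified, and none of them is routine. In particular, since $C$ is Hamiltonian every internal vertex of the arc $A'$ bounding your bigon already lies on $C$, possibly on $C\setminus A$, so replacing $A$ by $A'$ need not yield a cycle at all; the arc $A$ may carry terminals that $A'$ misses, so the swap may drop terminals; and the balanced case, where the two arcs carry equally many off-${\cal C}$ edges, is precisely where a strict decrease in $c$ must be manufactured, which your one-sentence appeal to rerouting along $C_j$ does not do.

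The deeper problem is that the surgery is unnecessary, because you never exploit the hypothesis that $G$ is \emph{exactly} $C\cup C_1\cup\cdots\cup C_r$. This is what the paper's short proof uses: since $E(G)=E(C)\cup\bigcup_{i=1}^{r}E(C_i)$, every edge of $E(C')\setminus E(C)$ lies on some concentric cycle and is therefore free under $c$, so $E(C')\setminus\bigcup_i E(C_i)\subseteq E(C)$ and the competitor $C'$ is already at least as cheap as $C$; the paper concludes that $C'$ uses strictly more concentric-cycle edges than $C$ and hence $c({\cal L}')<c({\cal L})$, contradicting ${\cal C}$-strong cheapness outright. No third cycle $C''$, no planarity beyond the given decomposition, and no bigon argument is needed --- the cost comparison is made between $C$ and $C'$ as they stand. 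Your opening reductions (that $V(C)=V(G)$, that $E(C')\setminus E(C)\neq\emptyset$, and that such an edge lies on some $C_j$) do agree with the paper's; it is the step from there to the contradiction that is both missing and misdirected.
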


\begin{proof}Assume that ${\cal L}$ is not  strongly  vital in $G,$ i.e., there is a different, isomorphic to ${\cal L}=(C,T),$ cyclic linkage ${{\cal L}'}=(C',T)$ in $G$.
As ${\cal L} \neq {\cal L}'$ we have that $C' \neq C$, therefore there exists an edge $e\in E(C')\setminus E(C)$ (this is because $V(C) = V(C')$ which follows from the strong vitality of ${\cal L}$ in $G$).\\
But, as $E(G)=E(C)\cup \bigcup_{i=1}^{r} E(C_i),$ we derive that $e\in \bigcup_{i=1}^{r} E(C_i)$ (observe that the only way $C'$ can be different from $C$ is by using extra edges from the cycles of ${\cal C}$).\\
 Thus, $|E(C')\cap \bigcup_{i=1}^{r} E(C_i)|> |E(C)\cap \bigcup_{i=1}^{r} E(C_i)|$ and, by the definition of cheap graph linkages,
$c({\cal L})>c({\cal L}'),$ which  contradicts the assumption that ${\cal L}$ is ${\cal C}$-strongly cheap. Therefore, ${\cal L}=(C,T)$ is a strongly vital cyclic linkage in $G,$ as claimed.
\end{proof}

\noindent We are now able to prove the main combinatorial result of this paper.

\begin{lemma}
\label{lem:lemma2.5}
Let $G$ be a plane graph containing some sequence of
concentric cycles $\mathcal{C}=\{C_{1},\ldots,C_{r}\}$.
Let also ${\cal L}=(C,T)$ be a cyclic linkage of $G$ where
$T\cap V(\hat{C}_{r})=\emptyset$. 
If ${\cal L}$ is ${\cal C}$-strongly cheap then the penetration of ${\cal L}$ in ${\cal C}$ is at most  $r\leq 16\cdot |T|-1$.
\end{lemma}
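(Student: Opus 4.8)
The plan is to reduce the statement to Lemma~\ref{lem:lemma2.3}, the crux of the reduction being that $\mathcal C$-strong cheapness is first upgraded to strong vitality via Lemma~\ref{lem:cheap2vital}. Throughout, write $p=p_{\mathcal C}(\mathcal L)$ for the penetration; if $p\le 1$ the asserted bound is immediate, so assume $p\ge 2$. Each reduction below will be checked to preserve the three features that matter: $\mathcal L$ remains strongly cheap (and later strongly vital) with respect to the current concentric sequence, all terminals remain outside the outermost disk, and the penetration remains equal to the number of cycles and is not decreased.

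First I would discard the cycles that $\mathcal L$ does not reach. The disks of $\mathcal C$ are nested and no point lies on two of their boundaries, so $C_1,\dots,C_r$ are pairwise vertex-disjoint; since every terminal lies outside $\bar{C}_r$, a path of $\mathcal L$ touching $C_i$ must also touch $C_{i+1},\dots,C_r$, so the cycles touched by $\mathcal L$ are exactly $C_{r-p+1},\dots,C_r$. Let $\mathcal C'$ be this subsequence. As $C$ shares no vertex, hence no edge, with the discarded cycles, $c(\cdot)$ takes the same value computed with respect to $\mathcal C$ or to $\mathcal C'$, and a linkage cheaper with respect to $\mathcal C'$ is a fortiori cheaper with respect to $\mathcal C$; thus $\mathcal L$ is $\mathcal C'$-strongly cheap, its penetration in $\mathcal C'$ equals $|\mathcal C'|=p$, and the terminal condition persists. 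Next I would restrict the host graph to $G'=(\cupall\mathcal C')\cup C$, which only removes possible competitors and so preserves strong cheapness, and then dissolve every vertex not on $C$: in $G'$ such a vertex lies on exactly one cycle of $\mathcal C'$ and is incident with no other edge, hence has degree $2$. The resulting (possibly multi-)graph $G''$ satisfies $E(G'')=E(C)\cup\bigcup_{D\in\mathcal C''}E(D)$, where $\mathcal C''$ are the images of the cycles of $\mathcal C'$ (still concentric, with the same bounding disks), and now $C$ is a Hamiltonian cycle of $G''$. A routine check — edges lying on cycles of $\mathcal C''$ contribute nothing to $c$, so a cheaper isomorphic linkage of $G''$ re-expands along those cycles into a cheaper one of $G'$ — shows $\mathcal L$ stays $\mathcal C''$-strongly cheap, with penetration $|\mathcal C''|=p$. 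Hence Lemma~\ref{lem:cheap2vital} applies to $G''$ and gives that $\mathcal L=(C,T)$ is \emph{strongly vital} in $G''$.

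Finally I would make the concentric sequence tight and invoke Lemma~\ref{lem:lemma2.3}. Strong vitality of $\mathcal L$ in $G''$ depends only on the pair $(\mathcal L,G'')$, hence is unaffected by any change of the concentric sequence. Moreover, the only edges of $G''$ strictly inside $\bar{C}''_1$, or strictly between $\bar{C}''_i$ and $\bar{C}''_{i+1}$, are edges of $C$, and a cycle built solely from edges of $C$ would have to equal $C$ — impossible, since $C$ reaches outside $\bar{C}_r$. So the ``between consecutive disks'' part of tightness already holds for $\mathcal C''$, and the only possible defect is that $C''_1$ is not surface-minimal; inserting, as a new innermost cycle, the boundary of a smaller disk (which necessarily uses an edge of some $C''_j$ and is therefore met by the Hamiltonian cycle $C$) and repeating finitely often yields a tight concentric sequence $\mathcal C^*$ with $\bar{C}^*_{|\mathcal C^*|}=\bar{C}_r$, penetration $m:=|\mathcal C^*|\ge p$, and $G''=(\cupall\mathcal C^*)\cup C$. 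Lemma~\ref{lem:lemma2.3} now gives $m\le 16|T|-1$, whence the penetration $p\le m\le 16|T|-1$, as claimed. The step I expect to be the real obstacle is getting this order right and verifying that each reduction keeps the relevant invariant: in particular, tightening the sequence is legitimate only \emph{after} cheapness has been turned into vitality (tightening destroys $\mathcal C$-strong cheapness but not strong vitality), and one must carry out the — routine but necessary — bookkeeping that cycle edges are free for $c$ (used both when dissolving and when checking that passing to the penetrated subsequence preserves strong cheapness).
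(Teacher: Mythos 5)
Your overall strategy is exactly the paper's: restrict to the $p$ penetrated (outermost) cycles, pass to the subgraph formed by those cycles together with $C$, dissolve the vertices off $C$ so that $C$ becomes Hamiltonian, upgrade $\mathcal{C}$-strong cheapness to strong vitality via Lemma~\ref{lem:cheap2vital}, and finish with Lemma~\ref{lem:lemma2.3}. (The paper phrases this as a contradiction starting from a hypothetical penetration of $16|T|$, while you argue directly; that difference is cosmetic, and your bookkeeping of cheapness through the successive reductions is done correctly and in more detail than in the paper.)

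The one step that fails is your tightness argument. You assert that the ``no disk strictly between $\bar{C}''_i$ and $\bar{C}''_{i+1}$'' half of tightness already holds for $\mathcal{C}''$, on the grounds that the only edges of $G''$ lying strictly in the open annulus between $C''_i$ and $C''_{i+1}$ are edges of $C$, so that a violating cycle would be built solely from edges of $C$ and would therefore equal $C$. This is not so: a cycle bounding a disk $D$ with $\bar{C}''_i \subset D \subset \bar{C}''_{i+1}$ only has to lie in the \emph{closed} annulus, so it may freely combine edges of $C''_i$ and of $C''_{i+1}$ with the edges of $C$ that cross the annulus. This mixed situation is precisely the generic way tightness fails (it is what the dashed edges of Figure~\ref{tight_cycles} create), and since $C$ crosses every annulus of $\mathcal{C}''$ (its penetration is full), such shortcut cycles will typically exist. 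Consequently your repair --- inserting new innermost cycles only --- does not in general produce a tight sequence, and Lemma~\ref{lem:lemma2.3} cannot be invoked as written. The fix is the one the paper (tersely) uses: replace the entire sequence $\mathcal{C}''$ by a tight concentric sequence of the same length, observing that strong vitality of $\mathcal{L}$ in $G''$ does not depend on the choice of sequence, that the terminal condition persists because the new disks can be taken inside the old ones, and that full penetration is automatic since $C$ is Hamiltonian. (Your instinct to preserve the hypothesis $G''=(\cupall\mathcal{C}^{*})\cup C$ of Lemma~\ref{lem:lemma2.3} by only adding cycles is a legitimate concern that the paper glosses over, but the insertion procedure you propose does not achieve tightness, so it cannot be the whole answer.) With the tightening step replaced, your argument goes through.
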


\begin{proof}
Suppose that some path $P\in {\cal P}({\cal L})$ intersects at least
$16\cdot |T|$ cycles of ${\cal C}$. Then, $P$ intersects all cycles in
${\cal C}^*=\{C_{r-16\cdot |T|+1},\ldots,C_{r}\}$.\\
Let $G'$ be the graph obtained by $C\cup \cupall {\cal C}^*$ after dissolving all  vertices of degree $2$ that do not belong to $T$ and let ${\cal L}'=(C',T)$ be the linkage of $G'$ 
obtained from ${\cal L}$ if we dissolve  the same vertices in the paths of ${\cal L}$.  Similarly, by dissolving vertices of degree 2 in  the cycles of ${\cal C}^*$ we obtain a  new sequence of
concentric cycles which, for notational convenience, we denote by $\mathcal{C}'=\{C_{1},\ldots,C_{r'}\},$ where $r'=16\cdot |T|$.\\
The cyclic linkage ${\cal L}'$ is ${\cal C}'$-strongly cheap because 
${\cal L}$ is ${\cal C}$-strongy cheap (it is easy to observe that no edge of $\bigcup_{i=1}^r E(C_r) \setminus E(C)$ belongs to $E(C')$).
Notice that $C'$ is a Hamiltonian cycle of $G'$ and, from Lemma~\ref{lem:cheap2vital}, ${\cal L}'$ is a strongly vital cyclic linkage of $G'$. We also assume that ${\cal C}'$ is tight (otherwise we can replace it by a tight one and observe that, by its uniqueness, ${\cal L}'$ will be cheap to this new one as well). As ${\cal L}'$ is 
${\cal C}'$-strongly cheap and ${\cal C}'$ is tight, from Lemma~\ref{lem:lemma2.3}, $r'\leq 16\cdot |T|-1$; a contradiction.
\end{proof}

\section{The algorithm}
\label{profsalg}

This section is devoted to the proof of Theorem~\ref{mainalgs}.
We consider the following, slightly more general, problem.

\begin{center}
\fbox{\begin{minipage}{11cm}
\noindent{\sc Planar Annotated Cyclability}\\
{\sl Input}:  A plane graph $G,$ a set $R\subseteq V(G),$  and a non-negative integer $k$. \\
{\sl Question}: Does there exist, for every set $S$ of $k$ vertices in $R,$ a cycle $C$ of $G$ such that $S\subseteq V(C)$?
\end{minipage}}
\end{center}

\noindent In this section, 
for simplicity, we refer to {\sc Planar Annotated  Cyclability} as problem {\sc PAC}. 
Theorem~\ref{mainalgs} follows directly from the following lemma.

\begin{lemma}
\label{mainalg}
There is an algorithm that solves  {\sc PAC}   in   $2^{2^{O(k^2\log k)}}\cdot n^2$ steps.
\end{lemma}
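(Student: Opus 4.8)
The plan is to follow the two-step irrelevant-vertex scheme sketched in the introduction, recursing on the quantity $|V(G)| + |R|$ so that each recursive call either strips off a vertex or uncolors a vertex of $R$, giving $O(n)$ levels of recursion, each of which performs a single treewidth-bounded dynamic programming computation plus some structural bookkeeping.

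First I would handle the bounded-treewidth base case. Fix a function $f(k) = 2^{O(k^2\log k)}$ (made precise below) and check whether $\tw(G) \le f(k)$; this can be done in time $2^{O(f(k))}\cdot n$ by Bodlaender-type approximation. If so, I invoke the dynamic programming algorithm of Section~\ref{DP}, which solves {\sc PAC} on graphs of treewidth $t$ in time $2^{O(t\log t)}\cdot n$ (or whatever bound that section gives); plugging in $t = f(k)$ yields the claimed running time for this leaf. If instead $\tw(G) > f(k)$, then by Lemma~\ref{prop:propositionone} we have $\gw(G) = \Omega(f(k))$, so $G$ contains a large subdivided wall $W$, and by planarity its compass $K_W$ has treewidth bounded in terms of the wall's height. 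Inside $K_W$ I extract, using the wall's layers, a sequence $\mathcal{C}$ of concentric cycles together with a set of paths crossing all of them — i.e.\ an $(r,q)$-railed annulus with $r,q$ both growing like a suitable polynomial/exponential in $k$ — and I may assume (by retightening) that $\mathcal{C}$ is tight. This is the structure that the combinatorial lemmas of Section~\ref{linkagelems} are designed to exploit.

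Next comes the case analysis on how $R$ sits inside $\mathcal{C}$. If $R$ is not $q_0$-dense in $\mathcal{C}$ for an appropriate threshold $q_0 = \Theta(k)$, then there is a block of $\ge q_0$ consecutive cycles whose ``between'' annulus $\hat A_{i,i+q_0-1}$ contains no vertex of $R$; I then argue that a suitably central vertex $v$ of this $R$-free annular region is \emph{problem-irrelevant} — any cycle through a given $k$-set $S\subseteq R$ that uses $v$ can be rerouted around $v$ using the concentric cycles and rails, because $S$ lies outside the region, so $(G,R,k)$ and $(G\setminus v, R, k)$ are equivalent; recurse on the latter. If, on the other hand, $R$ \emph{is} $q_0$-dense, then I build, for each of the $O(n)$ candidate ``annulus'' positions, the instance obtained by uncoloring all of $R$ in a wide annulus of $\mathcal{C}$; each such graph is a subgraph of $K_W$ hence has bounded treewidth, so I solve it by the DP of Section~\ref{DP}. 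If any of these auxiliary instances is a \no-instance, I return \no\ for $(G,R,k)$ (a monotonicity argument: uncoloring vertices can only make a \no-instance stay \no, and an annular \no-witness survives in the original). Otherwise I pick a colored vertex $v$ deep inside the innermost cycle of $\mathcal{C}$ and claim it is \emph{color-irrelevant}: $(G,R,k)$ is equivalent to $(G, R\setminus v, k)$. The proof of color-irrelevance is the crux — here one takes any $k$-set $S$ with $v\in S$ that is claimed to be a counterexample in $(G, R\setminus v,k)$ (meaning $S\setminus v$ plus maybe other vertices is uncyclable…), more precisely one shows that a cycle through $S\setminus\{v\}$ together with $v$ always exists by taking a cheapest such cyclic linkage, applying Lemma~\ref{lem:lemma2.5} to bound its penetration into $\mathcal{C}$ by $16|T|-1 = O(k)$, and observing that since the annulus is wider than this bound, the cycle must stay clear of the central region where $v$ is, whence $v$ can be spliced in via the rails and the innermost cycles. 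This is exactly where strong vitality (via strong cheapness, Lemma~\ref{lem:cheap2vital}) is needed rather than mere weak vitality, since the cyclic order of the terminals on the rerouted cycle is not fixed in advance.

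The main obstacle, as in all incarnations of the irrelevant-vertex technique, is making the two rerouting arguments airtight: one must verify that the chosen railed annulus is genuinely wide enough that a cyclic linkage of penetration $O(k)$ through it leaves an uncrossed ``collar'' around the central vertex, and that the rails themselves can be used to reconnect a cut cycle without disturbing the terminals $S$ — this forces the quantitative choice of $r$ and $q$, and hence of $f(k)$, and is where the double-exponential $2^{2^{O(k^2\log k)}}$ enters (the wall height must be large enough to survive the bounded-treewidth threshold \emph{and} yield $\Omega(k)$-many cycles/rails after all the extractions and tightenings). Once the irrelevance claims are established, the recursion terminates after $O(n)$ steps, each costing $2^{2^{O(k^2\log k)}}\cdot n$ for the DP calls (there are $O(n)$ auxiliary annular instances per level, giving the extra factor of $n$), for a total of $2^{2^{O(k^2\log k)}}\cdot n^2$, as required. \qed
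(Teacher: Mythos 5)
Your overall architecture matches the paper's: a recursion on $|V(G)|+|R|$ that at each level either solves the instance by bounded-treewidth DP, deletes a problem-irrelevant vertex found in an $R$-free block of concentric cycles, or (after checking a family of cropped annular instances by DP) uncolors a color-irrelevant vertex inside the innermost cycle, with Lemma~\ref{lem:lemma2.5} (via strong cheapness and Lemma~\ref{lem:cheap2vital}) doing the rerouting work. However, there are concrete errors in how you quantify and justify the steps. First, your treewidth threshold $f(k)=2^{O(k^2\log k)}$ is wrong: the threshold must be \emph{polynomial} in $k$ (the paper uses width $18q$ with $q=O(k^2)$, dictated by the wall height needed to extract a $(98k^2+2k,\,2k+1)$-railed annulus). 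Since the DP of Section~\ref{DP} runs in $2^{2^{O(w\log w)}}\cdot n$, plugging in a width that is already singly exponential in $k$ would make the algorithm \emph{triply} exponential; the double exponential in the theorem comes from running the doubly-exponential-in-width DP at width $O(k^2)$, not from an exponentially tall wall. Second, your running-time accounting is off: the number of cropped annular instances per recursion level is $O(k^2)$ (one per index $i\le r-98k-2$ with $r=O(k^2)$), not $O(n)$. If there really were $O(n)$ DP calls per level, each costing $2^{2^{O(k^2\log k)}}\cdot n$, the total over $O(n)$ levels would be cubic in $n$. The second factor of $n$ in the theorem comes solely from the $O(n)$ recursion depth.

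The more substantive gap is your justification for answering \no\ when some cropped instance $(\hat{C}_{i+b},R_i,k)$ fails. You call this ``a monotonicity argument,'' but subgraph monotonicity points the wrong way: a $k$-set $S\subseteq R_i$ that is not cyclable in the cropped subgraph $\hat{C}_{i+b}$ could a priori still be cyclable in $G$ by a cycle that escapes through $C_{i+b}$ and uses material outside the cropped region. Ruling this out is exactly where the penetration bound is needed again: all of $S$ lies well inside $C_{i+b}$, so a ${\cal C}$-cheap cycle through $S$ in $G$ has penetration at most $16k-1$ into the intervening cycles and hence cannot leave $\hat{C}_{i+b}$ (this is why the paper separates $R_i$ from $C_{i+b}$ by $b=98k+2$ cycles). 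Without this argument the \no-branch of Lemma~\ref{lem:color_irrelevant} is unjustified, and your algorithm could incorrectly reject \yes-instances. Relatedly, your sketch of color-irrelevance omits the actual splicing mechanism: the paper splits $S$ into the part inside and the part outside an $S$-free band of $98k+1$ cycles, obtains two cycles of bounded penetration from opposite sides, and uses Lemma~\ref{lem:rails} twice to convert each into a path between two fixed vertices $x,y$ of a middle cycle, whose union is the desired cycle; ``$v$ can be spliced in via the rails'' glosses over this two-sided construction.
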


The rest of this section is devoted to the proof of Lemma~\ref{mainalg}.

\paragraph{Problem/color-irrelevant vertices.} Let $(G,k,R)$ be an instance of {\sc PAC}. We call a vertex $v \in V(G)\setminus R$ {\em problem-irrelevant}  if $(G,k,R)$ is a \yes-instance if and only if  $(G \setminus v,k,R)$ is a \yes-instance. We call a vertex $v\in R$ {\em color-irrelevant}  when $(G,k,R)$ is a \yes-instance 
 if and only if  $(G,k,R\setminus\{v\})$ is a \yes-instance.

Before we present the algorithm of Lemma~\ref{mainalg}, we need to introduce  three algorithms that are used in it as subroutines.\medskip

\noindent{\bf Algorithm DP}$(G,R,k,q,{\cal D})$\\
\noindent{\sl Input:} A graph $G,$ a vertex set $R\subseteq V(G),$ two non-negative integers $k$ and $q,$ where $k\leq q,$ and a tree decomposition ${\cal D}$  of $G$ of width $q$.\\
\noindent{\sl Output:} An answer whether $(G,R,k)$ is a \yes-instance of  {\sc PAC} or not.\\
\noindent{\sl Running time:} $2^{2^{O(q\cdot \log q)}}\cdot n$.
\medskip

Algorithm {\bf DP}  is based on dynamic programming on tree decompositions of  graphs. The  technical details are presented in Section~\ref{DP}.
\medskip

\noindent{\bf Algorithm Compass}$(G,q)$\\
\noindent{\sl Input:} A planar graph $G$ and a non-negative integer $q.$\\
\noindent{\sl Output:} Either a tree decomposition of $G$ of width at most $18q$
or  a subdivided wall $W$ of $G$ of height  $q$  and a tree decomposition ${\cal D}$  of  the compass $K_{W}$ of $W$ 
of width at most $18q$. \\
\noindent{\sl Running time:} $2^{q^{O(1)}}\cdot n$.
\medskip

We describe algorithm  {\bf Compass}   in Subsection~\ref{compassproof}.\medskip

\noindent{\bf Algorithm concentric\_cycles}$(G,R,k,q,W)$\\
\noindent{\sl Input:} A planar graph $G,$ a set $R\subseteq V(G),$ a non-negative integer $k,$ and a subdivided wall $W$ of $G$ of height at least $392k^2+40k$.\\
\noindent{\sl Output:} Either  a problem-irrelevant vertex $v$ 
or a sequence  ${\cal C}=\{C_{1},C_{2},\ldots ,C_{98k+2}\}$ of concentric cycles of $G,$  with the following properties:

\begin{enumerate}
\item[(1)]  $\bar{C}_1\cap R\neq \emptyset$.
\item[(2)] The set $R$ is $32k$-dense in ${\cal C}$.
\item[(3)] There exists a sequence  ${\cal W}$ of $2k+1$ paths in  $K_W$ such that $({\cal C},{\cal W})$ is a $(98k+2, 2k+1)$-railed annulus.
\end{enumerate}
\noindent{\sl Running time:} $O(n)$.\medskip

We describe Algorithm  {\bf  concentric\_cycles}  in Subsection~\ref{accproof}.
We now use the above three algorithms to describe the main algorithm of this paper which is the following.\medskip

\noindent{\bf Algorithm Planar\_Annotated\_Cyclability}$(G,R,k)$\\
\noindent{\sl Input:} A planar graph $G,$ a set $R\subseteq V(G),$ and  a non-negative integer $k$.\\
\noindent{\sl Output:} 
An answer whether $(G,R,k)$ is a \yes-instance of {\sc PAC} or not. \\
\noindent{\sl Running time:} $2^{2^{O(k^2\log k)}}\cdot n^2$.

\begin{itemize}
\item[]\hspace{-6mm}[{\sl Step {\it 1}.}]  Let  $r=98k^2+2k,$ $y=16k,$  and $q=2y+4r$.
If {{\bf Compass}($G,q)$} returns a tree decomposition of $G$ 
of width $w=18q,$  then  return {\bf DP}$(G,R,k,w)$ and stop. Otherwise, the algorithm {{\bf Compass}($G,q)$} returns a  subdivided wall $W$
of $G$  of height $q$ and a tree decomposition  ${\cal D}$  of  the compass $K_{W}$ of $W$ of width at most $w$.
\medskip

\item[]\hspace{-6mm}[{\sl Step {\it 2}.}]  If the algorithm {\bf concentric\_cycles}$(G,R,k,q,W)$ returns a problem-irrelevant vertex $v,$ then return {\bf Planar\_Annotated\_Cyclability}$(G\setminus v,R\setminus v,k)$ and stop.  Otherwise, it returns a  
sequence  ${\cal C}=\{C_{1},C_{2},\ldots ,C_r\}$ of concentric cycles of $G$ with the properties (1)--(3).\\

\item[]\hspace{-6mm}[{\sl Step {\it 3}.}]  For every $i\in\{1,\ldots, r-98k-2\}$ let $w_{i}$ be a vertex in $\hat{A}_{i+k,i+33\cdot k}\cap R$ (this vertex exists as, from property (2),  $R$ is $32 k$-dense in ${\cal C}$), let $R_{i}=(R\cap V(\hat{C}_{i}))\cup\{w_i\},$ and 
let ${\cal D}_{i}$ be a tree decomposition of $\hat{C}_{i}$ of width at most $w$ -- this tree decomposition 
can be constructed in linear time from ${\cal D}$  as each $\hat{C}_{i}$ is a subgraph of $K_{W}$.
\medskip

\item[]\hspace{-6mm}[{\sl Step {\it 4}.}] 
If, for some $i\in\{1,\ldots, r-98k-2\},$ the algorithm ${\bf DP}(\hat{C}_{i},R_{i},k,q,{\cal D}_{i})$ returns a negative answer, then return a negative answer
and stop. Otherwise  return {\bf Planar\_Annotated\_Cyclability}$(G,R\setminus v,k)$  where $v\in V(\hat{C_1})\cap R$ (the choice of $v$ is possible due to property (1)).
\end{itemize}

\noindent {\it Proof of Lemma~\ref{mainalg}.}
The only non-trivial step in the above algorithm is Step 4. Its correctness
follows  from Lemma~\ref{lem:color_irrelevant}, presented in Subsection~\ref{lemlem}.

We now proceed to the analysis of the running time of the algorithm.
Observe first that the call of {\bf Compass}($G,q)$ in Step 1 takes $2^{k^{O(1)}}\cdot n$
steps and, in the case that a tree decomposition is returned, the ${\bf DP}$ requires 
$2^{2^{O(k^2\log k)}}\cdot n$ steps. For Step 2, the algorithm {\bf concentric\_cycles} takes 
$O(n)$ steps and if it returns a problem-irrelevant vertex, then the whole algorithm is applied again for 
a graph with one  vertex less. Suppose now that 
Step 2 returns a  sequence  ${\cal C}$ of concentric cycles of $G$ with the properties (1)--(3).
Then the algorithm ${\bf DP}$ is called $O(k^2)$ times and this takes in total $2^{2^{O(k^2\log k)}}\cdot n$ steps.
After that, the algorithm either concludes to a negative answer or is called again with one vertex less in the set $R$.
In both cases where the algorithm is called again we have that the quantity $|V(G)|+|R|$ is becoming smaller. 
This means that the recursive calls of the algorithm cannot be more than $2n$. 
Therefore the total running time is bounded by $2^{2^{O(k^2\log k)}}\cdot n^2$  as required.

\subsection{The algorithm {\bf Compass}}
\label{compassproof}

Before we start the description of algorithm \noindent{\bf  Compass}
we present a result that  follows from Proposition~\ref{prop:propositionone}, the algorithms in~\cite{PerkovicR00anim} and~\cite{Bodlaender96alin}, 
and the fact that finding a subdivision of a planar $k$-vertex graph $H$ that has maximum degree 3 in a graph $G$  can be done, using dynamic programming,
in $2^{O(k\cdot \log k)}\cdot n$ steps (see also~\cite{AdlerDFST10fast}).

%

\begin{lemma}
\label{big_grid_small_width}
There exists an algorithm $A_{1}$ that, given a graph $G$ and an integer $h,$ 
outputs either a tree decomposition of $G$ of width at most $9h$
or a subdivided wall of $G$ of height $h$. This algorithm runs in $2^{h^{O(1)}}\cdot n$ steps.
\end{lemma}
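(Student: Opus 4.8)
The statement to prove is Lemma~\ref{big_grid_small_width}: an algorithm $A_1$ that, given $G$ and $h$, either outputs a tree decomposition of $G$ of width at most $9h$ or a subdivided wall of $G$ of height $h$, running in $2^{h^{O(1)}}\cdot n$ steps. The paragraph preceding the lemma already tells us which ingredients to combine, so the proof is essentially an assembly argument. First I would invoke Proposition~\ref{prop:propositionone}, which says that for a planar graph $G$ we have $\tw(G)\le 9\cdot\gw(G)+1$; equivalently, if $\tw(G)>9h$ then $\gw(G)\ge h$, i.e.\ $G$ contains a subdivided wall of height $h$ as a subgraph. So the algorithm naturally splits into two tasks: (a) decide, roughly, whether $\tw(G)$ is small or large, producing a decomposition in the former case; (b) when it is large, actually \emph{find} a subdivided wall of height $h$.

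For task (a), I would run a constant-factor (or FPT-approximation) treewidth algorithm. The reference to Perković--Reed~\cite{PerkovicR00anim} and Bodlaender~\cite{Bodlaender96alin} is exactly for this: one can, in time $2^{O(h)}\cdot n$ (linear in $n$ for fixed $h$), either conclude $\tw(G)\le 9h$ and output a witnessing tree decomposition, or conclude $\tw(G) > 9h$ (more precisely, obtain a tree decomposition of bounded width or a certificate that the width exceeds the threshold). In the first case we are done, returning that decomposition. In the second case, by Proposition~\ref{prop:propositionone} we know a height-$h$ subdivided wall exists in $G$ — but we still need to produce it, which is task (b).

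For task (b), since $\tw(G)\le 9h$ fails we now have a tree decomposition of $G$ of width $O(h)$ (from the bounded-width branch of the approximation, applied with a slightly larger threshold, or from the standard fact that we can compute one of width $O(h)$ once we know $\tw$ is $\Theta(h)$). On a graph of treewidth $O(h)$, finding a fixed topological minor is FPT: a subdivided wall of height $h$ is a subdivision of the wall $W_h$, which is a planar graph on $O(h^2)$ vertices of maximum degree $3$. The last sentence of the cited paragraph gives precisely the tool — testing for a subdivision of a planar, maximum-degree-$3$, $k$-vertex graph $H$ can be done by dynamic programming over a tree decomposition in $2^{O(k\log k)}\cdot n$ steps (cf.~\cite{AdlerDFST10fast}). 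Applying this with $H=W_h$, so $k=O(h^2)$, finds the subdivided wall in $2^{O(h^2\log h)}\cdot n = 2^{h^{O(1)}}\cdot n$ time. Combining the running times of (a) and (b) gives the claimed $2^{h^{O(1)}}\cdot n$ bound, and correctness follows because in the ``large treewidth'' branch Proposition~\ref{prop:propositionone} guarantees the wall exists, so the subdivision test cannot fail.

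\textbf{Main obstacle.} The genuinely delicate point is the interface between the approximate-treewidth step and the wall-finding step: one must make sure that when the first step reports ``width not at most $9h$'' it simultaneously hands us a tree decomposition of width $O(h)$ (not merely a non-constructive guarantee), since the dynamic-programming subdivision test in (b) needs such a decomposition as input. This is handled by running the linear-time approximation of Perković--Reed / Bodlaender with the appropriate threshold so that it always returns \emph{some} decomposition of width $O(h)$ whenever $\tw(G)=O(h)$, and only in that case do we proceed to look for the wall; if instead $\tw(G)$ is genuinely huge the approximation already certifies a decomposition of width $9h$ is impossible only after the cheaper check, so the logic must be arranged to first try the $9h$ bound and, upon failure, fall back to the $O(h)$-width decomposition for the subdivision search. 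Everything else is bookkeeping of constants and of running times.
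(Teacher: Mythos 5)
Your assembly uses exactly the ingredients the paper itself invokes for this lemma (the bound $\tw(G)\leq 9\cdot\gw(G)+1$ of Lemma~\ref{prop:propositionone}, the linear-time treewidth algorithms of Bodlaender and Perkovi\'c--Reed, and dynamic programming for finding a subdivision of a bounded-degree planar pattern), and the paper offers no more detail than that one-sentence citation, so the skeleton matches. The genuine gap is in your task (b). After the width-$9h$ test fails you assert that ``we now have a tree decomposition of $G$ of width $O(h)$\,\ldots\,from the standard fact that we can compute one of width $O(h)$ once we know $\tw$ is $\Theta(h)$.'' But certifying $\tw(G)>9h$ gives no upper bound whatsoever on $\tw(G)$: a planar graph can have treewidth $\Theta(\sqrt{n})$, in which case no width-$O(h)$ tree decomposition of $G$ exists and the subdivision-finding dynamic program has nothing to run on. Your ``main obstacle'' paragraph correctly names this difficulty but then proposes to ``fall back to the $O(h)$-width decomposition,'' i.e., to an object that need not exist; the circularity is not resolved.

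There are two standard ways to close the hole. (i) Use the constructive form of the planar excluded-grid theorem (the results of~\cite{RS91} and~\cite{GuT10impr} underlying Lemma~\ref{prop:propositionone}): when the width is large these algorithms output a large grid minor together with its model, and since $W_h$ has maximum degree $3$ a grid minor model converts directly into a subdivided-wall subgraph, with no dynamic programming needed. (ii) Keep the DP but run it on a subgraph of bounded treewidth: order $V(G)=\{v_1,\ldots,v_n\}$ and locate the first index $i$ with $\tw(G[\{v_1,\ldots,v_i\}])>9h$; since adding a vertex raises treewidth by at most one, this subgraph has treewidth at most $9h+1$, it still contains a subdivided wall of height $h$ by Lemma~\ref{prop:propositionone}, and Bodlaender's algorithm supplies the decomposition the DP needs --- though a naive search for $i$ costs an extra factor of $n$ (or $\log n$), so keeping the advertised $2^{h^{O(1)}}\cdot n$ bound requires route (i) or the self-reduction machinery of~\cite{AdlerDFST10fast}. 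The remainder of your argument, including the bookkeeping $2^{O(h^2\log h)}\cdot n=2^{h^{O(1)}}\cdot n$, is fine.
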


\paragraph{Description of algorithm \noindent{\bf  Compass}}
%
%
%
%
We use a routine, call it $A_{2}$, that receives as input a subdivided wall $W$ of $G$ with height equal to some even number $h$ and outputs a subdivided wall $W'$ of $G$  such that $W'$ has height $h/2$ and $|V(K_{W'})|\leq |V(G)|/4$. 
$A_{2}$ uses the fact that, in $W,$ there are  4 vertex-disjoint 
subdivided subwalls of $W$ of height $h/2$. Among them, $A_{2}$ outputs the  one with the minimum number of vertices and this can be done  in $O(n)$ steps. 
The algorithm  
{\bf  Compass} uses as subroutines the routine $A_{2}$ and the algorithm $A_{1}$ of Lemma~\ref{compassproof}.

\begin{tabbing}
\noindent{\bf Algorithm Compass}$(G,q)$\\
\mbox{[}{\sl Step {\it 1}.}\mbox{]}  \=  if \= $A_{1}(G,2q)$ outputs a tree decomposition ${\cal D}$ of $G$ with\\
\> \> width at most $18q$  then return ${\cal D},$\\
\> \> otherwise it outputs   a subdivided wall $W$ of $G$ of height $2q$\\
\mbox{[}{\sl Step {\it 2}.}\mbox{]}   \=  Let $W' = A_{2}(W)$\\
\>  \> if \=$A_{1}(K_{W'},2q)$ outputs  a tree decomposition ${\cal D}$ of \\
\> \> \> $K_{W'}$ of width at most $18q$  then return $W'$ and  ${\cal D},$\\
\>  \> \> otherwise $W\leftarrow {W'}$ and  go to Step 2.
\end{tabbing}


{\noindent}Notice that, if $A$ terminates after the first execution of Step 1, then it outputs a tree decomposition of $G$ of width at most $18q$. 
Otherwise, the output is a subdivided wall $W'$ of height $q$ in $G$ and a tree decomposition of $K_{W'}$ of width at most 
$18q$ (notice that as long as this is not the case, the algorithm keeps returning to step 2).
 The application of routine $A_2$ ensures that the number  of vertices of every new $K_{W}$ is at least four times smaller than the one of the previous one.
Therefore,  the $i$-th call of the algorithm $A_{1}$ 
requires $O(2^{h^{O(1)}}\cdot \frac{n}{2^{2(i-1)}})$ steps. As $\sum_{i=0}^{\infty} \frac{1}{2^{2i}}=O(1),$
algorithm {\bf Compass} has the same running time as algorithm $A_1$.

\subsection{The Algorithm {\bf  concentric\_cycles}}
\label{accproof}

We need to introduce two lemmata.
The first one is strongly based on  the combinatorial Lemma~\ref{lem:lemma2.5} that is the main result of Section~\ref{linkagelems}.

\begin{lemma}
\label{lem:problem_irrelevant}
Let $(G,R,k)$ be an instance of {\sc PAC} and let ${\cal C}=\{C_1, \ldots, C_r\}$ be a sequence of concentric cycles in $G$ such that  $V(\hat{C}_r) \cap R=\emptyset$. If $r\geq 16\cdot k,$ then all vertices in $V(\hat{C}_{1})$  are problem-irrelevant.
\end{lemma}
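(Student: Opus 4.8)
\textbf{Proof plan for Lemma~\ref{lem:problem_irrelevant}.}

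The plan is to show that for any vertex $v\in V(\hat C_1)$, the instances $(G,R,k)$ and $(G\setminus v,R,k)$ are equivalent. One direction is trivial: if $(G,R,k)$ is a \yes-instance then every $k$-set $S\subseteq R$ lies on a cycle of $G$; since $R\cap V(\hat C_r)=\emptyset$ and $v\in V(\hat C_1)\subseteq V(\hat C_r)$, we have $v\notin R$, so the only thing to check is that we can ``reroute'' a cycle through $S$ avoiding $v$. The nontrivial direction — and the main content — is exactly this rerouting claim, and it is where Lemma~\ref{lem:lemma2.5} enters. The converse direction, that a \yes-instance of $(G\setminus v,R,k)$ yields a \yes-instance of $(G,R,k)$, is immediate since any cycle of $G\setminus v$ is a cycle of $G$.

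So fix a $k$-set $S\subseteq R$ and suppose $C$ is a cycle of $G$ with $S\subseteq V(C)$; I must produce a cycle $C'$ of $G$ with $S\subseteq V(C')$ and $v\notin V(C')$. Consider the cyclic linkage ${\cal L}=(C,S)$, viewing the vertices of $S$ as terminals. The idea is to replace $C$ by a ${\cal C}$-strongly cheap cyclic linkage that is isomorphic to ${\cal L}$: among all cyclic linkages of $G$ isomorphic to ${\cal L}$ (equivalently, all cycles of $G$ through $S$ meeting $S$ in the same cyclic order as $C$), pick one, say with cycle $C^\ast$, minimizing the cost $c(\cdot)=|E(\cdot)\setminus\bigcup_{i}E(C_i)|$ — this is precisely the $\mathcal C$-strongly cheap condition, noting $S\cap V(\hat C_r)=\emptyset$ since $S\subseteq R$ and $R\cap V(\hat C_r)=\emptyset$. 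By Lemma~\ref{lem:lemma2.5}, the penetration of ${\cal L}^\ast=(C^\ast,S)$ in ${\cal C}$ is at most $16\cdot|S|-1 = 16k-1 < 16k \le r$. Hence some cycle $C_j$ of ${\cal C}$ is not intersected by $C^\ast$ at all, so $C^\ast$ lies entirely in one of the two regions bounded by $C_j$; since the terminals $S$ are all outside $\bar C_r \supseteq \bar C_j$, the cycle $C^\ast$ must lie in the outer region, i.e., $V(C^\ast)\cap V(\mathring C_j)=\emptyset$, and in particular $V(C^\ast)\cap V(\hat C_1)=\emptyset$ because $\hat C_1\subseteq \hat C_j \subseteq$ the closed disk $\bar C_j$ and $C^\ast$ avoids its interior. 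Actually a little care is needed: $C^\ast$ could still touch $C_j$ itself; but since $C^\ast$ avoids $\mathring C_j$, it certainly avoids $\mathring C_1 \supseteq$ — wait, the clean statement is $V(\hat C_1)\subseteq \mathring C_j$ whenever $1 \le j-1$, and if $j=1$ we instead note $C^\ast$ avoids $\mathring C_1$ hence avoids the interior, and any $v\in V(\hat C_1)$ lies in $\bar C_1$; the boundary case $v\in V(C_1)$ when $j=1$ needs the penetration bound to give $j\ge 2$, which follows since $r\ge 16k \ge 2$ and we may take $j\le 16k-1 < r$, but to kill $v\in V(C_1)$ we want $j\ge 2$, available because the penetration is $\le 16k-1$ so among $C_1,\dots,C_{16k}$ at least one with index $\ge 2$ is missed. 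Thus $C'=C^\ast$ is the desired cycle: it passes through all of $S$ and avoids every vertex of $V(\hat C_1)$, in particular $v$.

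The main obstacle is the bookkeeping in the last step: extracting from ``penetration $\le 16k-1$'' the conclusion that a specific vertex $v\in V(\hat C_1)$ is avoided, which requires choosing the un-penetrated cycle $C_j$ with $j\ge 2$ and arguing via the Jordan-curve structure of the concentric disks that $C^\ast$, having no vertex strictly inside $C_j$ and all terminals outside $\bar C_r$, in fact has no vertex in $\bar C_{j-1}\supseteq \hat C_1$. I expect this to be routine given the concentric-cycle formalism of Section~\ref{defakl} (using $\hat C_1 \subseteq \hat C_{j-1}$ and $\bar C_{j-1}\subseteq \mathring C_j$), but it is the step that genuinely uses planarity and the hypothesis $r\ge 16k$. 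Everything else is a direct invocation of Lemma~\ref{lem:lemma2.5} together with the trivial observation that shrinking a cycle away from $v$ only helps.
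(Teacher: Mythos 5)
Your proposal is correct and takes essentially the same route as the paper: replace the witnessing cycle by a $\mathcal{C}$-strongly cheap cyclic linkage through $S$, invoke Lemma~\ref{lem:lemma2.5} to bound its penetration by $16k-1<r$, and conclude from the concentric-disk structure that the resulting cycle avoids $V(\hat{C}_1)$ (the paper phrases this as the direct contradiction: a cheap cycle through a vertex of $\hat{C}_1$ with all terminals outside $\bar{C}_r$ would penetrate all $r\ge 16k$ cycles). One caveat: your assertion that an unpenetrated cycle $C_j$ with $j\ge 2$ must exist does not follow from the penetration bound alone ($C_1$ could be the unique missed cycle among $C_1,\dots,C_{16k}$), but this is harmless, since if $C_1$ is the missed cycle then $C^{\ast}$ contains no vertex of $C_1$ and, by the Jordan-curve argument applied to the terminals lying outside $\bar{C}_r$, no vertex of $\mathring{C}_1$ either, so it still avoids all of $V(\hat{C}_1)$.
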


\begin{proof} We observe that for every vertex $v\in V(G),$ if $(G\setminus v,R,k)\in \Pi$ then $(G,R,k)\in \Pi$ because $G\setminus v$ is a subgraph of $G$ and thus every cycle that exists in $G\setminus v$ also exists in $G$.\\
Assume now that $(G,R,k)\in \Pi,$ let $v\in V(\hat{C}_1),$ and let $S\subseteq R,$ $|S|\leq k$. 
We will prove that there exists a cycle in $G\setminus v$ containing all vertices of $S$.\\
As $(G,R,k)\in \Pi,$ there is a cyclic linkage ${\cal L}=(C,S)$ in $G$. If $v\notin V(C),$ then $C$ is a subgraph of $G\setminus v$  and we are done. Else, if $v\in V(C),$ let ${\cal L}'=(C',S)$ be a ${\cal C}$-weakly cheap cyclic linkage in the graph $H=G[V(C)\cup \lp\bigcup_{i=1}^{r} V(C_i)\rp],$ and assume that $v\in V(C')$ too. Then $C'$ meets all cycles of ${\cal C}$ and its penetration in ${\cal C}$ is more than $16\cdot|S|,$ which contradicts Lemma \ref{lem:lemma2.5}.\\
 Thus, $v\notin V(C')$ implying that there exists a cyclic linkage with $S$ as its set of terminals that does not contain $v$. As $S$ was arbitrarily chosen, vertex $v$ is problem-irrelevant.
\end{proof}

\begin{lemma}
\label{lem:railed_annulus}
Let $y,r,q,z$ be positive integers such that $y+1\leq z\leq r,$ $G$ be a graph embedded on $\Bbb{S}_{0}$ and let $R\subseteq V(G)$ be the set of annotated vertices of $G$.
Given a subdivided wall $W$ of height $h=2\cdot \max\{y,\lceil \frac{q}{8} \rceil\}+4r$ in $G$
then either $G$ contains a sequence  ${\cal C}'=\{C'_{1},C'_{2},\ldots ,C'_{y}\}$ of concentric cycles
such that $V(\hat{C}'_{y})\cap R=\emptyset$ or a sequence ${\cal C}=\{C_{1},C_{2},\ldots ,C_{r}\}$ of concentric cycles such that: 
\begin{enumerate}
\item  $\bar{C}_1\cap R\neq \emptyset$. 

\item $R$ is $z$-dense in ${\cal C}$.

\item  There exists a collection ${\cal W}$ of $q$ paths in $K_W,$ such that $({\cal C}, {\cal W})$ is a $(r,q)$-railed annulus in $G$.
\end{enumerate}  

Moreover, a sequence ${\cal C}'$ or ${\cal C}$ of concentric cycles as above can be constructed in $O(n)$ steps.\smallskip
\end{lemma}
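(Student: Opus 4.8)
The plan is to extract the desired structures from the layer set ${\cal J}_W=\{J_1,\ldots,J_{\lfloor h/2\rfloor}\}$ of the subdivided wall $W$, using the fact that the layers are a naturally concentric family of cycles sitting inside the compass $K_W$, together with the horizontal/vertical ``brick paths'' of $W$ that cross all of them. First I would fix the innermost part of the wall: since $h=2\cdot\max\{y,\lceil q/8\rceil\}+4r$, the layer set has at least $\max\{y,\lceil q/8\rceil\}+2r$ layers. I would peel off the $y$ innermost layers $J_{\lfloor h/2\rfloor-y+1},\ldots,J_{\lfloor h/2\rfloor}$ and call them $C'_1,\ldots,C'_y$ (reindexed so $C'_1$ is innermost); they form a concentric sequence, and $\hat C'_y\subseteq K_W$. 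Now there is a clean dichotomy: either $V(\hat C'_y)\cap R=\emptyset$, in which case we are in the first alternative of the lemma and we are done; or $\hat C'_y$ contains an annotated vertex, which will serve as the ``seed'' forcing property~(1) for the second alternative.

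In the second case I would build ${\cal C}=\{C_1,\ldots,C_r\}$ as follows. Let $C_1$ be (a cycle of $W$ lying inside) the outermost layer of the block $\{J_{\lfloor h/2\rfloor-y+1},\ldots\}$ that still has an annotated vertex on or inside it; shrinking $C_1$ to be as small as possible guarantees $\bar C_1\cap R\neq\emptyset$ while $C_1$ is still interior enough that there remain at least $r$ further layers going outward to host $C_2,\ldots,C_r$. To get property~(2), $z$-density, I would \emph{not} simply take consecutive layers. Instead, among every window of $z$ consecutive available layers I would choose a representative cycle so that either the window already contains an annotated vertex in the corresponding annulus $\hat A_{i,i+z-1}$, or — if some long run of $z$ consecutive layers is entirely $R$-free — that run of $z$ layers is itself a concentric sequence $C'_1,\ldots,C'_y$ (here one uses $y+1\le z$ to fit $y$ layers strictly inside such an $R$-free annulus, and then the first alternative of the lemma applies after all). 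So the real content is: either some window of $z$ layers is $R$-free, giving the first alternative, or \emph{every} window of $z$ consecutive layers meets $R$ in its annulus, and then taking $C_1,\ldots,C_r$ to be (suitable cycles inside) $r$ appropriately spaced layers makes $R$ automatically $z$-dense in ${\cal C}$. For property~(3), the rails ${\cal W}$ come from the wall itself: $W$ contains $q$ vertex-disjoint ``vertical'' paths running monotonically from the perimeter inward, each crossing every layer of $W$ in a connected subpath (by the brick structure of a subdivided wall), and $h\ge 2\lceil q/8\rceil+4r$ is chosen exactly so that there are enough columns to produce $q\ge$ the required number of such rails while still leaving $r$ layers for ${\cal C}$; restricting these paths to the annular region between $C_r$ and $C_1$ and truncating them gives a collection ${\cal W}$ of $q$ paths in $K_W$ with $\cupall{\cal W}\subseteq A_{1,r}$ and connected cycle-rail intersections, i.e.\ an $(r,q)$-railed annulus.

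For the running-time claim: a subdivided wall of the required height is given, and computing its layer set, locating the innermost $R$-free window or the seed annotated vertex, selecting the $r$ representative layers, and truncating the $q$ vertical paths are all straightforward linear-time traversals of $W$ and $G$, so the whole construction runs in $O(n)$ steps. The main obstacle I anticipate is the bookkeeping that simultaneously secures all three properties with one choice of the cycles $C_1,\ldots,C_r$: one must space the chosen layers so that (a) the innermost one still has an annotated vertex in $\bar C_1$, (b) no window of $z$ of the \emph{selected} annuli is accidentally $R$-free unless a genuine $R$-free run of length $z$ among the \emph{original} layers existed (which would have triggered the ${\cal C}'$ alternative), and (c) enough layers and columns are left over for the $q$ rails — and verifying that the arithmetic $h=2\max\{y,\lceil q/8\rceil\}+4r$ leaves exactly enough room. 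The layer-crossing property of the rails (that each vertical path of $W$ meets each layer in a connected subpath) is intuitively clear from Figure~\ref{s-wall} but needs a careful statement about subdivided walls; I would isolate it as a small preliminary observation before assembling the railed annulus.
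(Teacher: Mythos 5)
Your overall strategy (layers of $W$ as the concentric cycles, rails from the vertical paths of the wall, and a dichotomy on whether some window of layers is $R$-free) is the same as the paper's, and your treatment of property (1), of the rails, and of the running time is essentially fine. The genuine gap is in the density-failure case. The first alternative of the lemma requires $V(\hat{C}'_{y})\cap R=\emptyset$, where $\hat{C}'_{y}=G\cap D'_{y}$ is the \emph{entire} closed disk bounded by the outermost cycle $C'_{y}$ of the new sequence, not merely an annulus. If a window of $z$ consecutive layers of $W$ is $R$-free only in the annulus sense, the $y$ layers you propose to ``fit strictly inside'' that window are still full concentric cycles around the centre of the wall: the closed disk of the outermost of them contains everything inside the window's innermost layer, and in the branch you are in (the seed annotated vertex survived your first test, so $\bar{C}_1\cap R\neq\emptyset$) that region does contain vertices of $R$. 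So these layers do not witness the first alternative, and the inequality $y+1\leq z$ cannot be used in the way you describe.

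The paper's repair is the step you are missing: it takes the chosen cycles to be every \emph{other} layer, so that a failed density window $\hat{A}_{i,i+z-1}$ spans $2z-1>2y$ layers of $W$; these layers, together with at least $2y$ of the rails crossing them, form a subdivided wall of height $2y$ lying \emph{entirely inside} the annulus. The layer set of \emph{that} sub-wall supplies $y$ concentric cycles whose outermost closed disk $\bar{C}'_{y}$ is contained in the $R$-free annulus (it is a small disk inside the annular region, not one containing the hole), which is exactly what $V(\hat{C}'_{y})\cap R=\emptyset$ demands. This is also where the hypothesis $y+1\leq z$ is genuinely used, via $2z-1\geq 2y+1$. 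Once this step replaces your direct selection of layers, the rest of your plan goes through.
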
 

\begin{proof} 
Let $p=\max\{y,\lceil\frac{q}{8}\rceil\}$. We are given a 
subdivided wall $W$ of height $h=2p+4r$ and we define ${\cal C}=\{C_{1},\ldots,C_{r}\}$
such that $C_{i}=J_{\frac{h}{2}-p-2i+2}, i\in\{1,\ldots,r\}$.
Notice that there is a collection ${\cal W}$ of $8p$ vertex disjoint paths in $W$ such that 
$({\cal C},{\cal W})$ is a $(r,q)$-railed annulus. 
If $\bar{C}_1\cap R= \emptyset,$ then ${\cal C}'=\{J_{\frac{h}{2}},\ldots,J_{\frac{h}{2}+y-1}\}$
is a sequence of concentric cycles where $\bar{J}_{\frac{h}{2}+y-1}\subseteq \mathring{C_1}$ and we are done. Otherwise, we have that ${\cal C}$ satisfies property {\it 1}.\\
Suppose now that Property {\it 2} does not hold for ${\cal C}$.
Then, there exists some $i\in\{1,\ldots,r\}$ such that $A_{i,i+z-1}\cap R=\emptyset$.
Notice that $A_{i,i+z-1}$ contains $2z-1> 2y$ layers of $W$ which are crossed by at least $2y$
of the paths in ${\cal W}$  (these paths certainly exist as $2y<8p$). This implies the existence of a wall of height $2y$ in  $A_{i,i+z-1}$
which, in turn contains a sequence ${\cal C}'=\{C_{1}',\ldots,C_{y}'\}$ of concentric cycles.
As  $\bar{C}_{y}'\subseteq A_{i,i+z-1}$ we have that $V(\hat{C}'_{y})\cap R=\emptyset$ and we are done.
It remains to verify property {\it 3} for ${\cal C}$. This follows directly by including in ${\cal W}'$
any $q\leq 8p$ of the disjoint paths of ${\cal W}$. Then $({\cal C},{\cal W}')$
is the required  $(r,q)$-railed annulus. It is easy to verify that all steps of this proof can be 
turned to an algorithm that runs in linear, on $n,$ number of steps.
\end{proof}

\paragraph{Description of algorithm
 {\bf  concentric\_cycles} }
 This algorithm first applies the algorithm 
 of Lemma~\ref{lem:railed_annulus} for $y=16k,$ $r=98k^2+2k,$ $q=2k+1,$ and $z=32k$.
 If the output is a sequence  ${\cal C}'=\{C'_{1},C'_{2},\ldots ,C'_{y}\}$ of concentric cycles
such that $V(\hat{C}'_{y})\cap R=\emptyset$, then it returns 
a vertex $w$ of $\hat{C}_{1}'$. As $V(\hat{C}_r) \cap R=\emptyset,$ 
Lemma~\ref{lem:problem_irrelevant} implies that $w$ is problem-irrelevant. 
If the  output is a sequence ${\cal C}$ the it remains to observe that 
conditions {\em 1}--{\em 3} match the specifications of  algorithm {\bf  concentric\_cycles}.

\subsection{Correctness of algorithm {\bf Planar\_Annotated\_Cyclability}}
 \label{lemlem}
%
%

As mentioned in the proof of Lemma~\ref{mainalg}, the main step -- [{\it step 4}] -- of 
algorithm {\bf Planar\_Annotated\_Cyclability} is based on Lemma~\ref{lem:color_irrelevant} below.

\begin{lemma}
\label{lem:color_irrelevant}
Let $(G,R,k)$ be an instance of problem {\sc PAC} and let  $b=98k+2$ and $r=98k^2+2k$. Let also $({\cal C},{\cal W})$ be a $(r,2 k+1)$-railed annulus in $G,$ where ${\cal C}=\{C_1, \ldots C_r\}$ is a
sequence of concentric cycles such that 
$\hat{C}_1$ contains some vertex $v\in R$ and that $R$ is $32k$-dense in ${\cal C}$. For every $ i\in\{1, \ldots, r-b\}$ let $R_{i}=(R\cap V(\hat{C}_{i}))\cup\{w_i\},$ where $w_i\in V(\hat{A}_{i+k+1,33k+i+1})\cap R$.
If $(\hat{C}_{i+b},R_i,k)$ is a \no-instance of ${\rm \Pi},$ for some  $i\in\{1, \ldots, r-b\},$ then $(G,R,k)$ is a \no-instance of {\sc PAC}. Otherwise  vertex $v$ is {\em color-irrelevant}.
\end{lemma}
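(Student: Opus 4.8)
The statement has two directions. For the first (soundness of the negative answer), I would argue the contrapositive: assume $(G,R,k)$ is a \yes-instance and fix an index $i\in\{1,\dots,r-b\}$; I must produce a cycle of $\hat C_{i+b}$ through any given $k$-subset $S$ of $R_i$. Since $R_i\subseteq V(\hat C_i)\subseteq V(\hat C_{i+b})$ (note $w_i$ also lies inside $\hat C_{i+b}$ because $33k+i+1<i+b$), the global \yes-hypothesis gives a cycle $C$ of $G$ with $S\subseteq V(C)$. The task is to ``push'' this cycle inside $\hat C_{i+b}$ without disturbing $S$. Here is where the railed annulus enters: the $2k+1$ rails of ${\cal W}$ together with the concentric cycles $C_i,\dots,C_{i+b}$ supply a thick enough grid-like region that any cycle passing through the annulus $\hat A_{i,i+b}$ can be rerouted to stay within a bounded number of layers. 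Concretely, $C$ crosses the annulus some number of times; since $S$ lies entirely in $\hat C_i$, every excursion of $C$ out of $\hat C_{i+b}$ can be intercepted and short-circuited along the cycles $C_j$ using the rails to switch ``tracks'', because $b=98k+2$ gives many more layers than the at most $2k$ crossing strands of a cycle through $2k$ terminals can block. This produces a cycle $C'\subseteq \hat C_{i+b}$ still containing $S$, so $(\hat C_{i+b},R_i,k)$ is a \yes-instance.

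For the second direction I want to show: if \emph{all} the $(\hat C_{i+b},R_i,k)$ are \yes-instances, then $v$ is color-irrelevant, i.e.\ $(G,R,k)$ is a \yes-instance iff $(G,R\setminus\{v\},k)$ is. One implication is trivial since $R\setminus\{v\}\subseteq R$. For the other, assume $(G,R\setminus\{v\},k)$ is a \yes-instance and take $S\subseteq R$ with $|S|=k$ and $v\in S$ (the case $v\notin S$ is covered by the hypothesis directly). I need a cycle of $G$ through $S$. The idea is to locate an index $i$ so that all of $S\setminus\{v\}$ lies inside $\hat C_{i+b}$ while $w_i$ can be used as a ``substitute'' for $v$: apply the \yes-instance $(\hat C_{i+b},R_i,k)$ to the set $S' = (S\setminus\{v\})\cup\{w_i\}$ — this is legal provided $S\setminus\{v\}\subseteq R\cap V(\hat C_{i+b})$ and $w_i\in R_i$ — to get a cycle $C_0\subseteq \hat C_{i+b}$ through $S'$. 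Then I reroute $C_0$ inside $G$ so that it additionally passes through $v\in V(\hat C_1)$ (which sits well inside all the $\hat C_{i+b}$): use the railed annulus between $C_1$ and roughly $C_{i+k}$ to splice a detour from $C_0$ down to $v$ and back, sacrificing the passage through $w_i$ (which is why $w_i$ was only a placeholder, chosen in the band $\hat A_{i+k+1,33k+i+1}$ so the detour has room). The $32k$-density of $R$ in ${\cal C}$ is what guarantees such a $w_i$ exists in every band, and the choice $r=98k^2+2k$ versus $b=98k+2$ guarantees the relevant index $i$ with $S\setminus\{v\}\subseteq V(\hat C_{i})$ actually exists — there are more than enough disjoint bands that one of them must ``contain'' the $\le k-1$ vertices of $S\setminus\{v\}$, or else one argues by the pigeonhole/counting used to size $r$.

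The combinatorial heart, and the step I expect to be the main obstacle, is the rerouting lemma implicit in both directions: given a cycle meeting a set $T$ of $\le 2k$ terminals outside $\bar C_{i+b}$ (or anchored at specific interior vertices), one can reroute it through the railed annulus so that it uses only a controlled portion of the annulus while keeping $T$ on the cycle. This is exactly the kind of statement that Lemma~\ref{lem:lemma2.5} and the notion of ${\cal C}$-strongly cheap cyclic linkages are built for: passing to a cheapest isomorphic cyclic linkage, its penetration is at most $16|T|-1$, so with $b=98k+2$ layers (comfortably exceeding $16\cdot 2k$) the cheap cycle cannot reach the innermost cycles it would need to in order to leave $\hat C_{i+b}$; the $2k+1$ rails are then used to reconnect the pieces, invoking the touch-free/railed-annulus machinery of Proposition~\ref{prop_cd} and Lemma~\ref{lem_4x4grid}. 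I would carry this out by first reducing to the union graph $H$ of the relevant cycle and the concentric cycles (as in the proof of Lemma~\ref{lem:problem_irrelevant}), replacing the cycle by a ${\cal C}$-strongly cheap equivalent, applying Lemma~\ref{lem:lemma2.5} to bound its penetration, and finally doing the explicit track-switching along the rails to realize the detour through $v$ (respectively to confine the cycle to $\hat C_{i+b}$). The bookkeeping — matching the constants $392k^2+40k$, $98k^2+2k$, $98k+2$, $32k$, $16k$ so that every counting inequality closes — is tedious but routine once the rerouting step is in place.
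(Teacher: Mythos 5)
Your first direction (propagating a \no-answer from a cropped instance to the global one) is in the right spirit: confine a hypothetical cycle of $G$ through $S\subseteq R_i$ to $\hat C_{i+b}$ by passing to a cheap equivalent and invoking the penetration bound of Lemma~\ref{lem:lemma2.5}. No track-switching along the rails is even needed there, since $S\subseteq V(\hat C_{33k+i+1})$, and a cycle escaping $\hat C_{i+b}$ would have to cross all $65k+1$ concentric cycles separating $C_{33k+i+1}$ from $C_{i+b}$, exceeding the bound $16k-1$.

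The gap is in the color-irrelevance direction, and it is not mere bookkeeping. You propose to find an index $i$ with $S\setminus\{v\}\subseteq V(\hat C_i)$ and then run the cropped \yes-instance on $(S\setminus\{v\})\cup\{w_i\}$. No such $i$ need exist: $S$ is an arbitrary $k$-subset of $R$, and its vertices may lie entirely outside $C_r$ — the pigeonhole on $r=k(98k+2)$ only yields a band $\hat A_{i,i+98k}$ that \emph{avoids} $S$, not one whose interior \emph{contains} $S\setminus\{v\}$. This is precisely why the paper's proof splits $S$ along the empty band into $S_1\subset\mathring{C_i}$ (which contains $v$, since $v\in V(\hat C_1)$) and $S_2$ outside $\bar C_{i+98k+1}$, and covers the two halves with \emph{different} hypotheses: $S_1\cup\{v_1\}$ is cyclable in $\hat C_{i+b}$ by the cropped \yes-instance (legal because $S_1\cup\{v_1\}\subseteq R_i$), while $S_2\cup\{v_2\}$ is cyclable in $G$ by the \yes-instance on $(G,R\setminus\{v\},k)$ (legal because $v\notin S_2$). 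After bounding the penetration of cheap representatives of these two cyclic linkages, Lemma~\ref{lem:rails} is applied twice — once per half of the band — to turn each cycle into a path between two fixed vertices $x,y$ of the middle cycle $C_{i+49k}$ that still meets its terminal set; the union $P_1\cup P_2$ is the desired cycle through $S$. Your alternative — using $w_i$ as a placeholder for $v$ and then ``splicing a detour down to $v$'' — is also unsupported by the machinery you cite: Lemma~\ref{lem:rails} converts a cyclic linkage of prescribed penetration into a path between two boundary vertices of the annulus, but nothing in the paper lets you insert a specific interior vertex into an existing cycle while keeping all other terminals on it. The decomposition of $S$ and the gluing of the two cycles via the rails is the missing heart of the argument, not a routine rerouting step.
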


We first prove the following lemma, which reflects the use of the rails of a railed annulus and is crucial for the proof of Lemma \ref{lem:color_irrelevant}.
\medskip

\begin{figure}[ht]
    \centering
    \includegraphics[width=.7\textwidth]{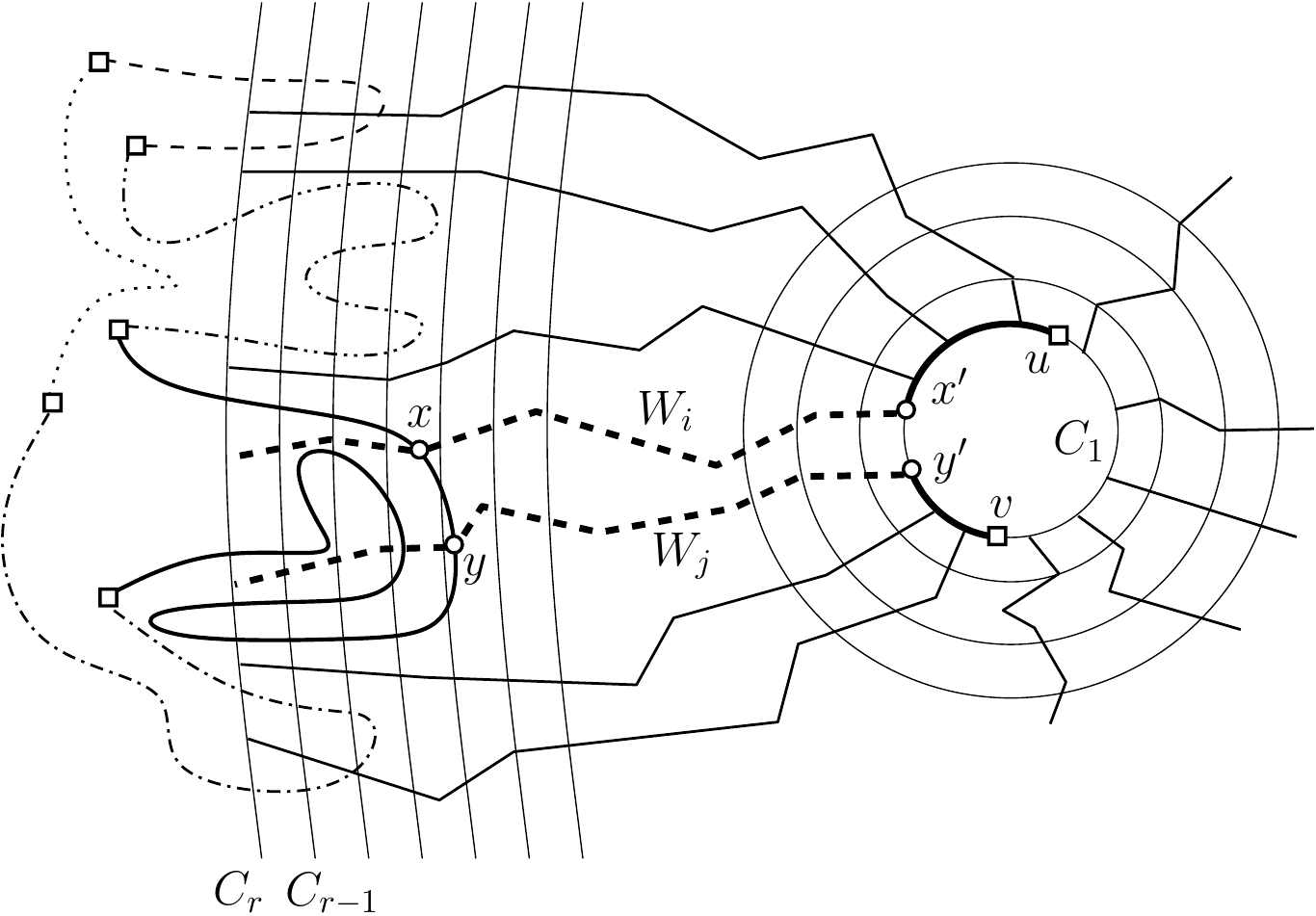}
    \caption{Visualization of proof of Lemma~\ref{lem:rails}, case 1. The different lining on the parts of the cycle at the left indicates the different colors of these paths.  
    }
    \label{lemma11_1}
\end{figure}

\medskip

\begin{lemma}
\label{lem:rails}
Let $G$ be a graph embedded on the sphere $\Bbb{S}_0,$ $r,k$ be two positive integers such that $r\geq16 k$,  and $({\cal C},{\cal W})$ be an $(r,2 k+1)$-railed annulus of $G$ with ${\cal C}=\{C_1, \ldots, C_r\}$ being its sequence of concentric cycles, ${\cal W}=\{W_1, \ldots, W_{2k+1}\}$ its rails. Let also $S\subseteq V(G)$ such that $S\cap \hat{C}_r=\emptyset$ and $|S|=k$. Then for every two vertices $u,v\in V(C_1),$ if there exists a cyclic linkage ${\cal L}=(C,S),$ with penetration $k+1\leq p_{\cal C}({\cal L})\leq r-1,$ in $G,$ then there exists a path $P_{u,v}$ with ends $u$ and $v$ that meets all vertices of $S$.
\end{lemma}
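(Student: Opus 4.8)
Fix two vertices $u, v \in V(C_1)$ and a cyclic linkage ${\cal L} = (C, S)$ with penetration $p := p_{\cal C}({\cal L})$ satisfying $k+1 \le p \le r-1$. The cycle $C$ passes through all $k$ vertices of $S$, all of which lie outside $\hat{C}_r$, yet $C$ does penetrate to depth at least $k+1$. The goal is to extract from $C$ (together with the rails) a $u$--$v$ path through all of $S$. The idea is: since $C$ reaches inside $C_{k+1}$ but all its terminals are far outside, the portion of $C$ lying inside the railed annulus consists of several ``excursions'' — subpaths that enter and leave the annulus — and we will reroute them using the $2k+1$ rails so that what remains is an open path from $u$ to $v$.

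First I would set up the structure of $C \cap \hat{A}_{1,r}$. Because $C$ is a cycle and $S \cap \hat{C}_r = \emptyset$, the intersection of $C$ with $\hat{C}_r$ is a disjoint union of subpaths; group those that go ``deep'' (reaching $C_1$) versus those that stay in the outer layers. Since the penetration is at least $k+1$, at least one subpath of $C$ reaches $C_1$. I would then argue that by contracting/rerouting along the cycles $C_i$ and using the fact that $C$ separates $\hat{C}_1$'s interior, $C$ actually meets $C_1$ in a controlled way; the two vertices $u,v$ on $C_1$ give us two ``attachment points.'' The key combinatorial move: the $2k+1$ rails $W_1, \ldots, W_{2k+1}$ cross every cycle $C_i$ in a connected subpath, so each rail gives a ``vertical'' connection from $C_r$ down to $C_1$. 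With $2k+1$ rails and at most $k$ terminals plus the two special vertices $u,v$ to accommodate, a pigeonhole/routing argument lets us choose rails avoiding conflicts. Concretely, I would walk along $C$ starting from the terminals; each time $C$ dips into the annulus and comes back out, I reroute that excursion along a cycle $C_i$ (staying outside $\hat{C}_i$ where there are no terminals) so that the excursion is ``flattened'' — this preserves the terminals visited but reduces penetration. Repeating, one reduces to a linkage of penetration $\le k$, or directly to the case where we can splice in $u$ and $v$: use two rails to connect $u$ and $v$ up to where $C$'s deepest excursion sits, cut $C$ open there, and close it up through $u, \ W_{i}, \ \text{(part of $C$)}, \ W_{j}, \ v$. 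The figure referenced (Figure~\ref{lemma11_1}, ``case 1'') suggests the authors split into cases by how $C$ interacts with $C_1$ — likely whether $C$ itself already touches $C_1$ or not — and in each case do an explicit rerouting.

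The main obstacle I anticipate is the bookkeeping of the rerouting: one must guarantee that after splicing in pieces of the rails and arcs of the concentric cycles, the result is genuinely a \emph{path} (no repeated vertices, no accidental cycles), that it still visits \emph{all} of $S$, and that it has the prescribed endpoints $u$ and $v$. The disjointness is where the bound $2k+1$ on the number of rails and the bounds $k+1 \le p \le r-1$, $r \ge 16k$ must all be used: each of the (at most $k$) terminals can be ``served'' by a bounded number of rail segments, the deep excursions of $C$ that we must reroute are limited because each one ``uses up'' penetration that the terminals outside cannot supply, and the two endpoints $u, v$ need their own dedicated rails. I would formalize ``excursion'' as a maximal subpath of $C$ contained in $\hat{C}_1 \cup (\text{some annulus region})$ and count them against the terminals; then invoke the connectivity of each rail–cycle intersection to perform the planar rerouting without crossings, which in a plane graph automatically yields vertex-disjointness of the new pieces from the untouched part of $C$. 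The remaining cases in the case analysis (presumably a ``case 2'' dual to the figure's ``case 1'') should be symmetric, handled by the same rerouting template with the roles of interior/exterior or of $u$ and $v$ swapped.
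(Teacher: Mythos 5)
Your high-level picture — open the cycle $C$ at two points and close it back up through $u$ and $v$ via two rails — is the right one, but the proposal is missing the single idea that makes this work, and it substitutes for it a step that does not. When you ``cut $C$ open'' at two points $a,b$ and keep only one of the two resulting arcs, the discarded arc must contain no vertex of $S$; equivalently, $a$ and $b$ must lie on the \emph{same} path of ${\cal P}({\cal L})$, i.e.\ on the same terminal-free segment of $C$ between two consecutive terminals. This is exactly what the count $2k+1$ buys: $C$ decomposes into only $k$ such segments, so if at least $k+1$ rails meet $C$, two of them first meet $C$ on the same segment $P$, and one replaces the subpath $P(x',y')$ by the two rail stubs plus two disjoint arcs of $C_1$ reaching $u$ and $v$. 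Your pigeonhole (``$2k+1$ rails versus $k$ terminals plus $u,v$'') is aimed at the wrong target — conflict avoidance rather than landing on a common terminal-free segment — and so it does not yield the property you need for the spliced object to still visit all of $S$.

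The proposal also has no real answer for the complementary case, which is not ``symmetric'': if at least $k+1$ rails \emph{miss} $C$ entirely, you cannot connect $u$ and $v$ to $C$ by rails alone. Here the hypothesis $p_{\cal C}({\cal L})\geq k+1$ is used to force $C$ to meet every one of the $k+1$ outermost cycles $C_{r-k},\dots,C_r$, and a second pigeonhole (over these $k+1$ levels and the $k$ segments of $C$) produces two rail--cycle intersection points from which short arcs of some $C_j$ reach the same segment $P$ of $C$; the splice then goes rail--arc of $C_j$--$C\setminus P(\cdot,\cdot)$--arc of $C_j$--rail. Finally, your intermediate step of ``flattening excursions'' of $C$ along concentric cycles to reduce its penetration is both unjustified (an arc of $C_i$ you reroute onto may be used by other excursions of $C$, so disjointness fails) and counterproductive: the argument needs the penetration to stay large, not to shrink. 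As written, the proposal does not constitute a proof.
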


\begin{proof} Let $\{s_1, \ldots, s_k\}$ be an ordering of the set $S$ and let $f_{\cal L}:{\cal L} ({\cal P}) \rightarrow \{1, \ldots, k\}$ be a function such that  for every $i\in\{1, \ldots k-1\}$, $f_{\cal L}(P)= i$ if the endpoints of $P$ are $s_{i}$ and $s_{i+1}$ and $f_{\cal L}(P^*)=k$ for the unique path $P^* \in {\cal P}({\cal L})$ whose endpoints are $s_k$ and $s_1$. \\
 Moreover, as $W_i$ is a path with endpoints $w'_i\in V(C_1)$ and $w''_i \in V(C_r),$ we define the ordering $\{w'_i, \ldots, w''_i\}$ of $V(W_i)$ and call it the natural ordering of $W_i$. Furthermore, for every $W_i\in {\cal W},$ let $m_{\cal L}(W_i)=f_{\cal L}(P)$ if $P$ is the first path (with respect to the natural ordering of $W_i$) of ${\cal P}({\cal L})$ that $W_i$ meets and $m_{\cal L}(W_i)=0$ if $W_i$ does not meet $C$.  \\
Let $C_j\in {\cal C}$. We pick an arbitrary vertex $v_{1}^{j}\in V(C_j)$ and order $V(C_j)$ starting from $v_{1}^{j}$ and continuing in clockwise order. Let $\{v_1^j, \ldots, v_{|V(C_j)|}^j\}$ be such an ordering of the vertices of $C_j$. We assign to each vertex of $v_i^j\in C_j$ a ``color" from the set $\{0, \ldots k\}$ as follows: $c_{\cal L}(v_i^j)=0$ if $v_i^j\notin V(C_j)\cap V(C)$ and $c_{\cal L}(v_i^j)=f_{\cal L}(P)$ if $v_i^j\in V(C_j)\cap V(P),$ where $P\in {\cal P}({\cal L})$.\\
For the rest of the proof, if $P_0$ is a path, $P_0(v,w)$ is the subpath of $P_0$ with endpoints $v$ and $w$. We examine two cases: 

\begin{enumerate}
\item At least $k+1$ paths of ${\cal W}$ (i.e. rails of the railed annulus) meet $C$. 
Then, as $|{\cal P}({\cal L})|=k,$ there exist two rails $W_i, W_j\in {\cal W}$ and a path $P\in{\cal P}({\cal L})$ such that 
$m_{\cal L}(W_i)=m_{\cal L}(W_j)=f_{\cal L}(P)$. 
Let $V(C_1)\cap V(W_i)$ be the vertices of path $Q_{1,i}$ and $V(C_1)\cap V(W_j)$ the vertices of path $Q_{1,j}$. 
Then, we let $x\in V(C_1)$ be the endpoint of $Q_{1,i}$ that is not $w'_i$ and $y\in V(C_1)$ be the endpoint of $Q_{1,j}$ that is not $w'_j$ (notice that $x$ and $y$ can coincide with $u$ and $v$). 
Let also $x'$ be the vertex of $V(P)\cap V(W_i)$ with the least index in the natural ordering of $W_i$  and $y'$ be the vertex of $V(P)\cap V(W_j)$ with the least index in the natural ordering of $W_j$. 
We observe that there exist two vertex disjoint paths $P_1$ and $P_2$ with endpoints either $v,x$ and $u,y$ or $v,y$ and $u,x,$ respectively. 
We define path $P_{u,v}=(C\setminus P(x',y')) \cup W_{i}(x,x') \cup W_{j}(y,y') \cup P_1\cup P_2$. Path $P_{u,v}$ has the desired properties. See also Figure~\ref{lemma11_1}.

\item There exist $k'= k+1$ paths, say ${\cal W}'=\{W_1, \ldots, W_{k'}\},$  of ${\cal W}$ that do not meet $C$. As the penetration of $C$ is at least $k+1,$ 
for every $j\in \{r-k, \ldots, r\},$ $V(C_j\cap C)\neq \emptyset$. For every $i\in \{1, \ldots, k'\}$ and every $j\in \{r-k, \ldots, r\}$ we assign to the vertex $w_i^j$ of $V(W_i\cap C_j)$ with the least index in the natural ordering of $W_i,$ a ``color" from the set 
$\{1, \ldots, k\}$ as follows: $c_{\cal L}(w_i^j)=c_{\cal L}(v)$
if there exists a $v\in V(C)$ and a subpath $C_j(w_i^j,v)$ (starting from $w_i^j$ and following $C_j$ in counter-clockwise order) such that it does not contain any other vertices of $V(C)$ as internal vertices.
For every $W_i\in {\cal W}',$ we assign to $W_i$ a set of colors, ${\chi}_i=\bigcup_{j=1}^{k+1} c_{\cal L}(w_i^j)$. Let ${\cal P}$ be the set of all maximal paths of $C_r$ without internal vertices in $C$. Certainly, any $W_i\in {\cal W}'$ intersects exactly one path of ${\cal P}$. We define the equivalence relation $\sim$ on the set of rails ${\cal W}'$ as follows: $W_i\sim W_{l}$ if and only if $W_i$ and $W_{l}$ intersect the same path of ${\cal P}$. We distinguish two subcases:

\begin{itemize}

\item The number of equivalence classes of $\sim$ is $k'$. Then, there exist two rails $W_i,W_{l} \in {\cal W}'$ and $j_i,j_l\in \{r-k, \ldots, r\}$ such that $c_{\cal L}(w_i^{j_i})=c_{\cal L}(w_{l}^{j_i})=c_{\cal L}(P)$ for some path $P\in {\cal P}({\cal L})$.

\item The number of equivalence classes of $\sim$ is strictly less than $k'$. Then, there exist two rails $W_i,W_{l} \in {\cal W}'$ such that $c_{\cal L}(w_i^j)=c_{\cal L}(w_{l}^j)$ for every $j\in \{r-k, \ldots, r\}$. Therefore, there exist $j_i,j_l\in \{r-k, \ldots, r\}$ with $j_i\neq j_l$ such that $c_{\cal L}(w_i^{j_i})=c_{\cal L}(w_{l}^{j_l})=c_{\cal L}(P)$ for some path $P\in {\cal P}({\cal L})$ (this holds because $ |\{r-k, \ldots, r\}|=k+1$ -- see also Figure~\ref{lemma11_2}).\end{itemize}

For both subcases, as $c_{\cal L}(w_i^{j_i})=c_{\cal L}(P),$ there exist a $v_j\in V(P)$ and a subpath $C_j(w_i^{j_i},v_j)$ of $C_j$ and, similarly, as $c_{\cal L}(w_{l}^{j_l})=c_{\cal L}(P),$ there exist a $v_{j_l}\in V(P)$ and a subpath $C_j(w_{l}^{j_l},v_{j_l})$ of $C_j$. These two subpaths do not contain any other vertices of $C$ apart from $v_{j_i}$ and $v_{j_l},$ respectively. Moreover, let $x$ be the vertex of $V(W_i\cap C_1)$ of the least index in the natural ordering of $W_i$ and $y$ the vertex of $V(W_{l}\cap C_1)$ of the least index in the natural ordering of $W_{l}$. As in case 1, observe that there exist two vertex disjoint paths $P_1$ and $P_2$ with endpoints either $v,x$ and $u,y$ or $v,y$ and $u,x,$ respectively. We define path $P_{u,v}=(C\setminus P(v_{j_i},v_{j_l}))\cup C_j(w_i^{j_i},v_{j_i}) \cup C_j(w_{l}^{j_l},v_{j_l}) \cup W_i(w_i^{j_i},x) \cup W_{l}(w_{l}^{j_l},y) \cup P_1 \cup P_2$. Path $P_{u,v}$ has the desired properties

\end{enumerate}
\end{proof}

\begin{figure}[ht]
    \centering
    \includegraphics[width=.7\textwidth]{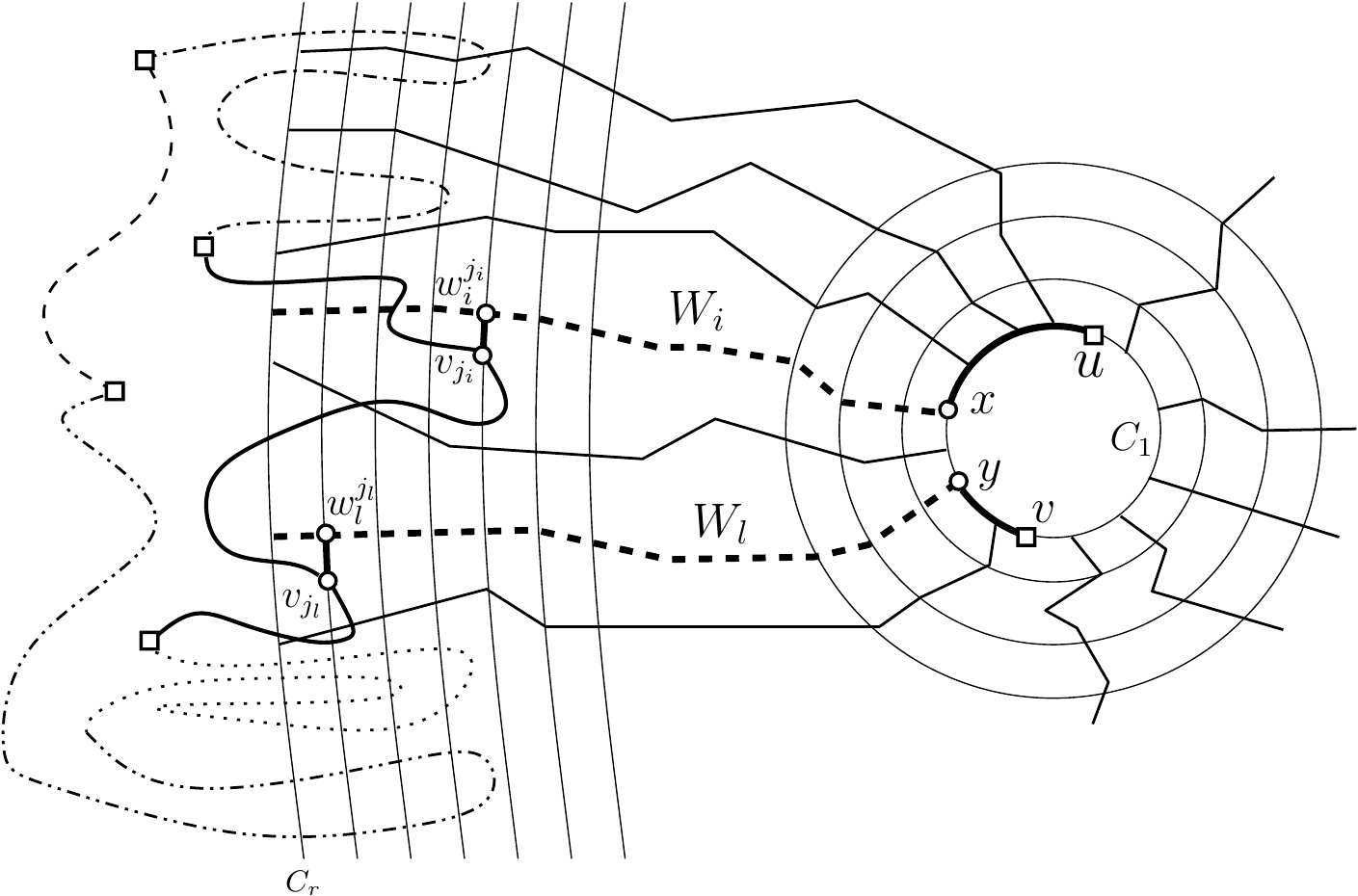}
    \caption{Visualization of proof of Lemma~\ref{lem:rails}, case 2, subcase 2.}
    \label{lemma11_2}
\end{figure}

\begin{proof}[Proof of Lemma~\ref{lem:color_irrelevant}] 
We first prove that if $(\hat{C}_{i+b},R_i,k)$ is a \yes-instance of {\sc PAC} for every $i\in\{1, \ldots, r-b\},$ then $(G,R,k)$ is a \yes-instance of {\sc PAC} iff $(G,R\setminus v,k)$ is a \yes-instance of {\sc PAC}.\\
For the non-trivial direction, we assume that $(G,R\setminus v,k)$ is a \yes-instance of {\sc PAC} and we have to prove that $(G,R,k)$ is also a \yes-instance of {\sc PAC}. Let $S\subseteq R$ with $|S|\leq k$. We have to prove that $S$ is cyclable in $G$. We examine two cases:
\begin{enumerate}

\item $v\notin S$. As $(G,R\setminus v,k)$ is a \yes-instance of ${\rm \Pi},$ clearly there exists a cyclic linkage ${\cal L}=(C,S)$ in $G,$ i.e., $S$ is cyclable in $G$. 

\item $v\in S$. As $r\geq k(98k+1)$ and $S\leq k,$ there exists $i$ such that $A_{i,i+98k}\cap S=\emptyset$.
We distinguish two sub-cases:\medskip

\noindent{\em Subcase 1.} $S\subseteq \bar{C}_{i+98k+1}$. Then, as $(\hat{C}_{i+98k+1},R_{i+98k+1},k)$
is a \yes-instance of ${\rm \Pi},$ then $S$ is cyclable in $\hat{C}_{i+98k+1}$ and therefore also in $G$.
\medskip

\noindent{\em Subcase 2.} There is a partition $\{S_{1},S_{2}\}$ of $S$ into two non-empty sets, such that 
$S_{1}\subset \mathring{C_{i}}$ and $S_{1}\cap  \bar{C}_{i+98k+1}=\emptyset$.
As $R$ is $32k$-dense in ${\cal C},$ there exists a vertex $v_{1}\in S\cap A_{i+k+1,i+33k+1}$
and a vertex $v_{2}\in S\cap A_{50+k+1,i+82k+1}$. For $i\in\{1,2\},$ let $S_{i}'=S_{i}\cup\{v_{i}\}$ and observe that $|S_{i}|\leq k$. 
Let ${\cal C}_{1}=\{C_{i+49k},\ldots,C_{i}\}$ and ${\cal C}_{2}=\{C_{i+49k},\ldots,C_{98k}\}$.
As $(\hat{C}_{i+98k+1},R_{98k+1},k)$ is a \yes-instance of ${\rm \Pi},$ $S_{1}'$ is cyclable in $\hat{C}_{i+98k+1}$. Also,  $(G,R\setminus v,k)$ is a \yes-instance, $S_{2}'$ is cyclable in $G$. For each $i\in\{1,2\},$
 there exists a cyclic linkage ${\cal L}_i=(C_{i},S_{i}')$ that has penetration 
at least $k+1$ in ${\cal C}_{i}$. We may assume that ${\cal L}_{i}$ is ${\cal C}_{i}$-cheap. 
Then, By Lemma~\ref{lem:problem_irrelevant}, the penetration of ${\cal L}_{i}$ in ${\cal C}_{i}$ 
is at most $49k$. Let ${\cal L}_{i}'=(C_{i},S_{i}), i\in\{1,2\}$.
For notational convenience we rename ${\cal C}_{1}$ and ${\cal C}_{2}$ where
${\cal C}_{1}=\{C^{1}_{1},\ldots,C^{1}_{49k+1}\}$ and ${\cal C}_{2}=\{C^{2}_{1},\ldots,C^{2}_{49k+1}\}$ 
(notice that $C_{49k+1}^{1}=C_{1}^{2}$).
Let $x,y$ be two distinct vertices in $C_{i+49k}$. 
For $i\in\{1,2\},$ we apply Lemma~\ref{lem:rails},
for $r=49k+1,$ $k,$ ${\cal C}_{i},$ ${\cal W},$ and $x$ and $y$
and obtain two paths $P_{i}, i\in\{1,2\},$ such that $S_{i}\subseteq V(P_{i})$
and whose endpoints are $x$ and $y$. Clearly, $P_{1}\cup P_{2}$ is a cycle
whose vertex set contains $S$ as a subset. Therefore $S$ is cyclable in $G,$ as required (see Figure~\ref{lemma10}).
\end{enumerate}
\phantom{.}
\vspace{-12.06mm}
\end{proof}

\begin{figure}[ht]
    \centering
    \includegraphics[width=.7\textwidth]{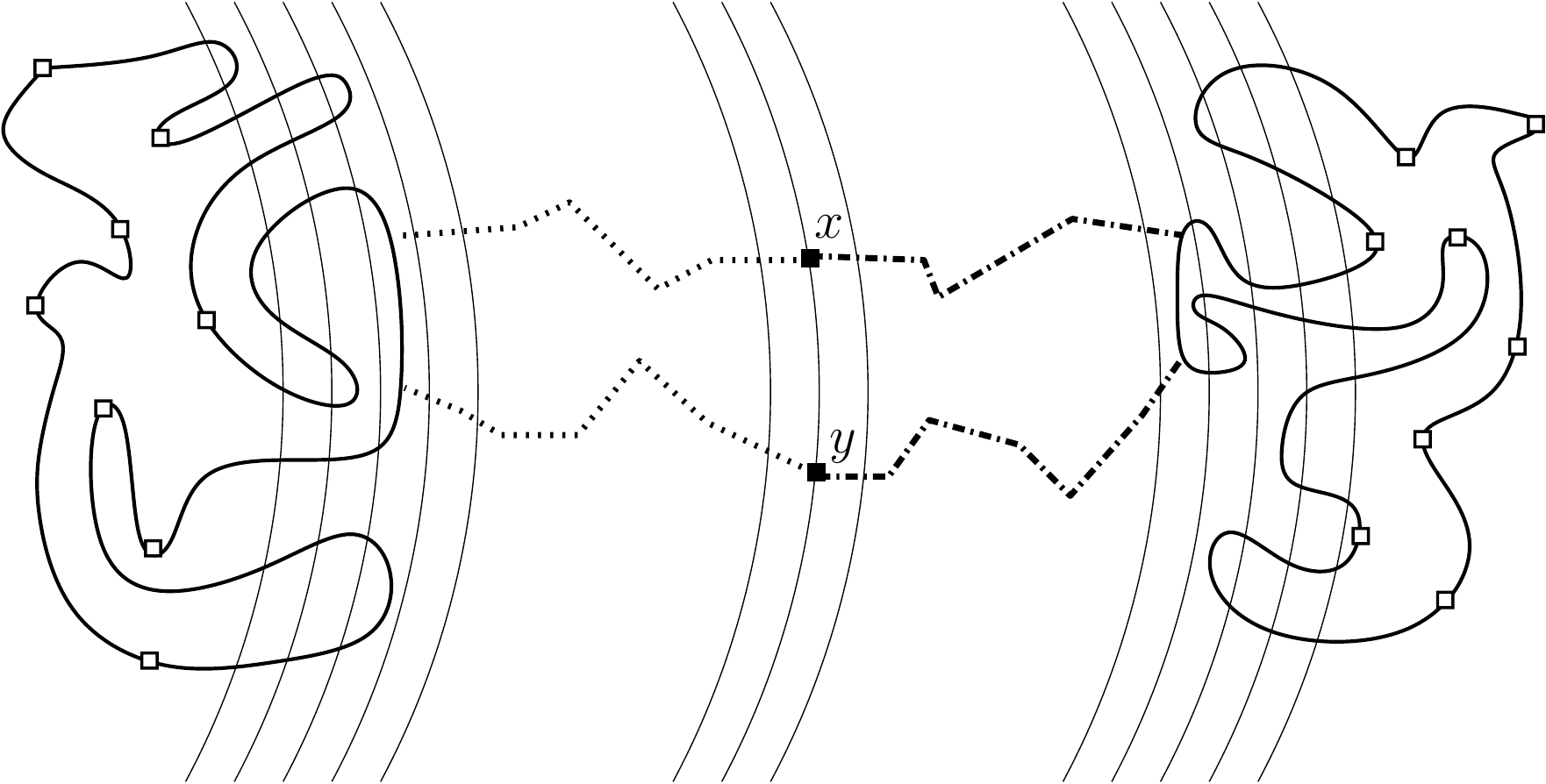}
    \caption{The squares of the right (resp. left) part represent the vertices of $S_1$ (resp. $S_2$). The connection between two cycles via rails and through $x$ and $y$ is derived from a double application of Lemma~\ref{lem:rails}. } 
    \label{lemma10}
\end{figure}

\section{Dynamic Programming for Planar Cyclability}
\label{DP}

\noindent In this section we present a dynamic programming algorithm for solving {\sc Cyclability} on graphs of bounded treewidth. We obtain the following algorithm.
\medskip

\noindent{\bf Algorithm DP}$(G,R,k,q,{\cal D})$\\
\noindent{\sl Input:} A graph $G,$ a vertex set $R\subseteq V(G),$ two non-negative integers $k$ and $q,$ where $k\leq q,$ and a tree decomposition ${\cal D}$  of $G$ of width $q$.\\
\noindent{\sl Output:} An answer whether $(G,R,k)$ is a \yes-instance of {\sc Planar Annotated  Cyclability} problem, or not.\\
\noindent{\sl Running time:} $2^{2^{O(q\cdot \log q)}}\cdot n$.
\medskip

\noindent We observe that the question of {\sc Planar Annotated  Cyclability} can be expressed in  monadic second-order logic (MSOL). It is sufficient to notice that 
an instance $(G,R,k)$ is a \yes-instance of {\sc Planar Annotated  Cyclability} if and only if for any (not necessarily distinct) $v_1,\ldots,v_k\in R,$ there are sets $X\subseteq V(G)$ and $S\subseteq E(G)$ such that $v_1,\ldots,v_k\in X$ and $C=(X,S)$ is a cycle. The property of  $C=(X,S)$ being a cycle is equivalent to asking whether
\begin{itemize}
\item[{\it i})] for any $x\in X,$ there are two distinct $e_1,e_2\in S$ such that $x$ is incident to $e_1$ and $e_2,$ 
\item[{\it ii})] for any $x\in X$ and any three pairwise distinct $e_1,e_2,e_3\in S,$ $e_1$ is not incident to $x$ or $ e_2$ is not incident to $x$  or 
$e_3$ is not incident to $x,$ and 
\item[{\it iii})] for any $Z_1,Z_2\subseteq X$ such 
that  $Z_1\cap Z_2=\emptyset,$ $Z_{1}\neq\emptyset,$ $Z_{2}\neq \emptyset$ and $Z_1\cup Z_2=X,$ 
there is $\{x,y\}\in S$ such that $x\in Z_1$ and $y\in Z_2$.
\end{itemize}
By the celebrated  Courcelle’s theorem (see, e.g., \cite{CourcelleE12,Courcelle90}), any 
problem that can be expressed in MSOL can be solved in linear time for 
graphs of bounded treewidth.\\
\noindent As we saw, {\sc Planar Annotated Cyclability}  can be solved in $f(q,k)\cdot n$ steps   if the treewidth of an input graph is at most $q$, for some computable function $f$.\\ 
As the general estimation of $f$ provided by  Courcelle’s theorem is immense, we give below a dynamic programming algorithm in order to achieve a  more reasonably running time.\medskip

\noindent First we introduce some notation.
\medskip

\noindent For every two integers $a$ and $b$, with $a<b$, we denote by $\intv{a}{b}$ the set of integers $\{a, a+1, \ldots, b\}$.
\noindent Let $S$ be a set and $i\in \Bbb{N}$. We define  $S^{[i]}=\{A\subseteq S \mid |A|= i\}$.

\paragraph{Sub-cyclic pairs.} Let $G$ be a graph, $C$ a cycle in $G$, and $\{A,X,B\}$ a partition of $V(G)$ such that no edge of $G$ has one endpoint in $A$ and the other in $B$. 
 The restriction of $C$ in $G[A \cup X]$ is called a {\em sub-cyclic} pair of $G$ (with respect to $A$, $X$ and $C$). We denote such a  sub-cyclic pair  by $({\cal Q}, Z)$, where ${\cal Q}$ contains the  connected components of the restriction of $C$ in $G[A \cup X]$ (observe that ${\cal Q}$ can contain isolated vertices, a unique cycle, and disjoint paths) and $Z=V(C)\cap X$.

\paragraph{Nice tree decompositions.} Let $G$ be a graph. A tree decomposition ${\cal D}=(T,{\cal X})$ of $G$ is called a {\em nice tree decomposition} of $G$ if $T$ is rooted to some leaf $r$ and:

\begin{enumerate}

\item for any leaf $l\in V(T)$ where $l\neq r$, $X_l=\emptyset$ (we call $X_l$ {\em leaf node} of ${\cal D}$, except from $X_r$ which we call {\em root} node)

\item the root and any non-leaf $t\in V(T)$ have one or two children

\item if $t$ has two children $t_1$ and $t_2$, then $X_t=X_{t_1}=X_{t_2}$ and $X_t$ is called {\em join} node

\item if $t$ has one child $t'$, then

\begin{itemize}

\item either $X_t=X_{t'}\cup \{v\}$ (we call $X_t$ {\em insert} node and $v$ {\em insert} vertex)

\item or $X_{t}=X_{t'}\setminus \{v\}$ (we call $X_t$ {\em forget} node and $v$ {\em forget} vertex).

\end{itemize}

\end{enumerate}

\paragraph{Pairings.} Let $W$ be a set. A {\em pairing} of $W$ is an undirected graph $H$ with vertex set $V(H)\subseteq W$ and where each vertex has degree at most 2 (a loop contributes 2 to the degree of its vertex) and if $H$ contains a cycle then this cycle is unique and all vertices not in this cycle have degree 0. Moreover, $H$ may also contain the vertex-less loop. 
We denote by ${\cal P}(W)$ the set of all pairings of $W$. It is known that if $|W|=w$ then $|{\cal P}(W)|=2^{O(w\cdot \log w)}$. 

\paragraph{Edge lifts.} Let $G$ be a graph and $v\in V(G)$ such that $\deg_G(v)=2$. Let also  $N_G(v)=\{u,w\}$. We say that the operation of deleting edges $\{v,u\}$ and $\{v,w\}$ and adding edge $\{u,w\}$ (if it does not exist, i.e. we do not allow double edges) is the {\em edge lift} from vertex $v$. We denote by {\sf lift}$(G,v)$ the graph resulting from $G$ after the edge lift from $v$.\\
For a vertex set $L\subseteq V(G)$ and a vertex $v\in V(G)$ we say that graph $H$={\sf lift}$(G,v)$ is the result of an $L${\em -edge-lift} if $v\in L$.

\medskip\medskip

\noindent Let $(G,R,k)$ be an instance of $p$-{\sc  Annotated Cyclability}. 
 Let also ${\cal D}=(T,{\cal X}, r)$ be a nice tree decomposition of $G$ of width $w$, where $r$ is the root of $T$. For every $x\in V(T)$ let $T_t$ be the subtree of $T$ rooted at $t$ (the vertices of $T_t$ are $t$ and its descendants in $T$). Then for every $t\in V(T)$, we define
$$G_t=G\Bigl{[}\bigcup_{t'\in V(T_t)}X_{t'}\Bigr{]} \mbox{~and~} V_{t}=V(G_{t}).$$
For every $i\in\Bbb{Z}_{>0}$, we  set ${\cal R}_t^i=(V(G_t)\cap R)^{[i]}$. 
We also denote ${\cal R}_{t}=\bigcup_{i=1}^{k}{\cal R}_{t}^{i}$.
\medskip

If  $({\cal Q},Z)$ is a  sub-cyclic pair  of $G_{t}$ where $X_t$ is thought of as the separator and $Z\subseteq X_{t}$,
we simply say that  $({\cal Q},Z)$ is a  {\em  sub-cyclic pair on $t$}.
Notice that each sub-cyclic pair $({\cal Q},Z)$ on $t$
corresponds to a  pairing in ${\cal P}(X_t)$, which we denote by  $P_{{\cal Q},Z}$ (just dissolve all vertices of ${\cal Q}$ that do not belong to $X_{t}$).

Let $P$ be a pairing of $X_t$ and $S$ be a subset of $V(G_t)$. We say that vertex set $S$ {\em realizes}  $P$ in $G_t$ if there exists a sub-cyclic pair $({\cal Q},Z)$ on $t$
such that $P_{{\cal Q},Z}=P$ and $S \subseteq V(\cupall {\cal Q})$.

We also define the {\em signature} of $S$ in $G_t$ to be the set of all pairings of $X_t$ that $S$ realizes and we denote it by $\sig_t(S).$ Notice that $\sig_t(S)\subseteq {\cal P}(X_t)$, therefore $|\sig_t(S)|=2^{O(w\cdot \log w)}$.
\vskip0.5cm

\paragraph{Tables.} We describe the tables of the dynamic programming algorithm.
For each $t\in V(T)$, we define  ${\cal C}_{t}=\intv{0}{k}\times X_{t}^{[i]}\times {\cal P}(X_t)$ and
 \begin{eqnarray*}
 {\cal F}_t & = & \{(i,K,{\cal P})\in{\cal C}_{t}\mid  \exists S\in {\cal R}_t^{i} \text{ such that } K=X_{t}\cap S  \mbox{ and } \sig_{t}(S)={\cal P} \}
 \end{eqnarray*}
We call ${\cal F}_t$ the {\em table} at node $t\in V(T)$. As $|{\cal P}(X_t)|={2^{O(w\cdot \log w)}}$, it follows that $|{\cal F}(t)|=2^{2^{O(w\cdot \log w)}}$. 
\medskip

\noindent Observe that $(G,R,k)$ is a \yes-instance of $p$-{\sc Annotated Cyclability} if and only if ${\cal F}_r=\{(0,\emptyset,P_r), (1, \emptyset,P_r), \ldots, (k,\emptyset,P_r)\}$, where $P_r$ is the unique pairing of ${\cal P}(X_r)$, i.e.,  the pairing that is the vertex-less loop (i.e., 
contains no vertices and a single edge with no endpoints).

\paragraph{New pairings from old.}
Before we describe the dynamic programming algorithm we need some definitions.
Suppose that $t$ is an insert node of ${\cal D}$ and $X_t=X_s\cup \{v\}$, where $s$ is the only child of $t$ 
in $T$ and $v\in V(G)$. Let $E_v^t = \{\{v,u\}\in E \mid u\in X_t\}$.
We denote by ${\cal P}_v^{\rm aux}$ the set of all graphs $(V, E)$ where $V\subseteq N_{G_{t}}(v)\cup \{v\}$ and $E \subseteq E_v^t $. For any $P \in {\cal P}(X_s)$, $\widetilde{P}\in {\cal P}_v^{\rm aux}$, and $L\subseteq X_{t}$, we define:
$$P \oplus_{L} \widetilde{P}= \{ P' \in {\cal P}(X_t) \mid 
P \text{ results from } P \cup \widetilde{P} \text{ after a sequence of $L$-edge-lifts} \}.$$  
\medskip
\begin{figure}[ht]
    \centering
    \includegraphics[width=.9 \textwidth]{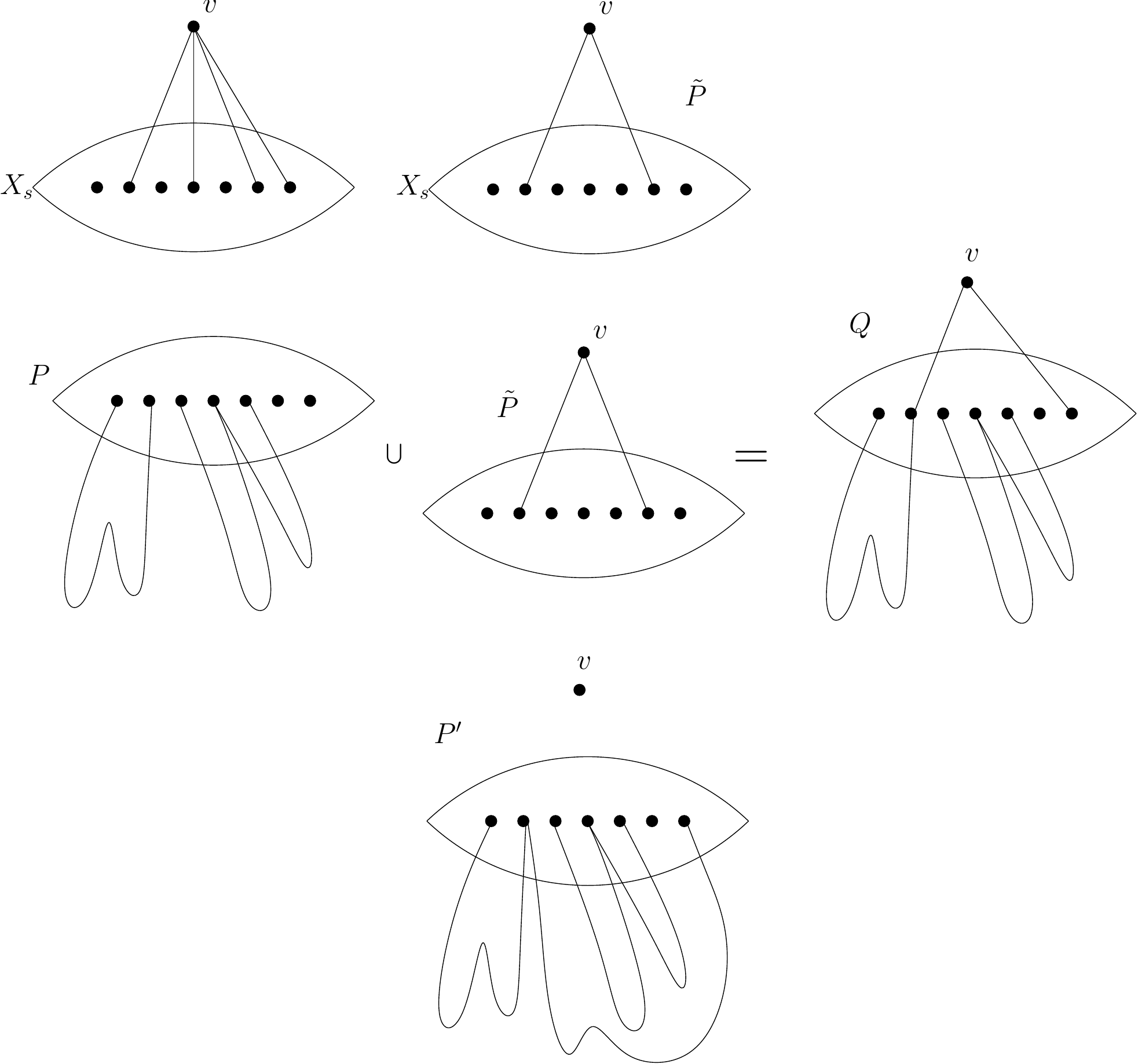}
        \caption{At the top we depict the neighborhood of node $v$ in $X_s$ (at the left) and an element, $\tilde{P}$ of $P_{v}^{aux}$ at the right. In the middle we depict the result, $Q$, of the union $P \cup \tilde{P}$, where $P \in {\cal P}(X_s)$. At the bottom we have the result, $P' \in {\cal P}(X_s \cup \{v\}) = {\cal P}(X_t)$, of lifting $v$ in $Q$.}
    \label{waslsl}
\end{figure}
See Figure~\ref{waslsl} for a visualization of the above definitions.
For every $P'\in {\cal P}(X_t)$ and $L\subseteq X_{t}$, we define

$$ \zeta_{L}(P')=\{ P \in {\cal P}(X_s) \mid \exists \widetilde{P}\in {\cal P}_v^{\rm aux} \text{ such that } P' \in {P \oplus_{L} \widetilde{P}} \}$$

\medskip

\noindent We are now ready to describe the  dynamic programming algorithm.
We distinguish the following  cases for the computation of ${\bf table}(t)$, $t\in V(G)$:

\paragraph{Node $t$ is a {\bf leaf node}:}\! as $X_{t}=\emptyset$,  we have that ${\cal F}(t)=\{(0,\emptyset,G_{\emptyset})\}$ where $G_{\emptyset}$ is the void graph.

\paragraph{Node $t$ is an {\bf insert node}:} Let $X_t=X_s\cup \{v\}$, where $s$ is the unique child of $t$ in $T$. 
We construct $\tab(t)$ by using the following procedure:

\begin{tabbing}
Procedure {\bf make\_join}\\
{\sl Input:} a subset ${\cal A}$ of ${\cal C}_{s}$\\
{\sl Output:} a subset ${\cal B}$ of ${\cal C}_{t}$\\
let ${\cal B}=\emptyset$\\
for \=  $(i,K,{\cal P})\in {\cal A}$ \\
    \> if \= $v\in R$ and $i<k$ then 	\\
    \> \> let \=${\cal B}={\cal B}\cup\{(i+1,K\cup\{v\},{\cal P}')\}$\\
    \> \> \> where ${\cal P}'=\{P\in {\cal P}(X_{t})\mid \zeta_{X_s\setminus K}(P)\cap {\cal P}\neq \emptyset\}$ \\
\> let \=${\cal B}={\cal B}\cup\{(i,K,{\cal P}'')\}$\\
 \> \> \> where ${\cal P}''=\{P\in {\cal P}(X_{t})\mid \zeta_{X_t\setminus K}(P) \cap {\cal P}\neq \emptyset\}$\\
 
\end{tabbing}

\begin{lemma}
\label{insert}
${\cal F}_{t}=\mbox{\bf make\_join}({\cal F}_{s})$.
\end{lemma}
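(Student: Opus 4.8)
The plan is to verify both inclusions ${\cal F}_t\subseteq\mbox{\bf make\_join}({\cal F}_s)$ and $\mbox{\bf make\_join}({\cal F}_s)\subseteq{\cal F}_t$, using the structural correspondence between sub-cyclic pairs on $t$ and on $s$. First I would unpack the key observation: since $t$ is an insert node with $X_t=X_s\cup\{v\}$ and $v$ was never in any earlier bag, the only edges of $G_t$ not present in $G_s$ are those in $E_v^t$, i.e. the edges incident to $v$ whose other endpoint lies in $X_t$. Consequently every sub-cyclic pair $({\cal Q},Z)$ on $t$ restricts, after discarding $v$ and its incident edges and dissolving, to a sub-cyclic pair on $s$; conversely, from a sub-cyclic pair on $s$ one obtains those on $t$ precisely by adding $v$ together with some subset of $E_v^t$ (an element $\widetilde P\in{\cal P}_v^{\mathrm{aux}}$) and then performing edge lifts. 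The definition of $P\oplus_L\widetilde P$ and $\zeta_L$ is exactly engineered to record this: lifting is allowed only at vertices in $L$, and here $L$ must be the set of bag vertices that are \emph{not} forced to stay on the cycle, namely $X_s\setminus K$ (when $v$ is newly put on the cycle and thus added to $K$) or $X_t\setminus K$ (when $v$ is not on the cycle).

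The main case analysis I would carry out is on whether $v\in V(\cupall{\cal Q})$ for the witnessing sub-cyclic pair, which corresponds exactly to the two branches of the procedure. In the branch where $v$ lies on the cycle: given $(i,K,{\cal P})\in{\cal F}_s$ witnessed by some $S\in{\cal R}_s^i$ with $K=X_s\cap S$ and $\sig_s(S)={\cal P}$, and assuming $v\in R$ and $i<k$, I would set $S'=S\cup\{v\}$, note $S'\in{\cal R}_t^{i+1}$ and $X_t\cap S'=K\cup\{v\}$, and show $\sig_t(S')={\cal P}'$ where ${\cal P}'=\{P:\zeta_{X_s\setminus K}(P)\cap{\cal P}\ne\emptyset\}$. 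The forward direction of this signature equality: any sub-cyclic pair on $t$ containing $S'$ must have $v$ on it, so deleting $v$'s two incident edges and dissolving yields a sub-cyclic pair on $s$ containing $S$ whose pairing lies in ${\cal P}$ — and the lifts used to reconstruct happen only at vertices of $X_s$ that are dissolved, i.e. not in $S$, hence in $X_s\setminus K$. The backward direction: given $P$ with some $P\in\zeta_{X_s\setminus K}(P)\cap{\cal P}$, unwind the definition to build an explicit sub-cyclic pair on $t$ realizing $P$ and containing $S'$. The branch where $v$ is not on the cycle is analogous but cleaner ($i$ and $K$ unchanged, lifts permitted over all of $X_t\setminus K$ since $v\notin S$ too); here one must also handle the subtlety that $\widetilde P$ may legitimately be the empty graph on $v$, i.e. $v$ contributes nothing.

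The step I expect to be the main obstacle is proving the signature equalities precisely — in particular making sure the set $L$ of liftable vertices is exactly right. The delicate point is that an edge lift dissolves a degree-2 vertex, and we are only allowed to dissolve vertices that are \emph{not} required to be on the final cycle as witnesses; a vertex in $K$ (equivalently in $S\cap X_t$) must survive, while a bag vertex outside $K$ may or may not be dissolved depending on the pairing. I would need to check carefully that $\zeta$ ranges over \emph{sequences} of lifts (so that chains of degree-2 bag vertices collapse correctly) and that the auxiliary graph ${\cal P}_v^{\mathrm{aux}}$ — allowing $V\subseteq N_{G_t}(v)\cup\{v\}$ and multiple incident edges at $v$, including a loop — captures every way $v$ can sit inside a cycle or path of $G_t$. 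A secondary technical nuisance is bookkeeping when $K\cup\{v\}$ would exceed size $k$ or when $i=k$ already, which is why the procedure guards the first branch with $i<k$; I would confirm this guard loses nothing, since a sub-cyclic pair witnessing membership in ${\cal R}_t^{i+1}$ needs $i+1\le k$. Once these correspondences are pinned down, both inclusions follow by transcribing them, and the lemma is complete.
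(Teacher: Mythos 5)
Your proposal is correct and follows essentially the same route as the paper: both inclusions are verified by taking the witnessing set $S$ for a triple in ${\cal F}_s$ (resp.\ ${\cal F}_t$), adding or removing $v$, and proving the signature equality $\sig_t(S\cup\{v\})={\cal P}'$ via the correspondence between sub-cyclic pairs on $t$ and on $s$ given by deleting/reattaching $v$ with an element of ${\cal P}_v^{\rm aux}$ and performing $L$-edge-lifts; the paper organizes the same argument as a case analysis on $\deg_P(v)\in\{0,1,2\}$, which subsumes your branch on whether $v$ lies on the cycle. The subtleties you flag (the exact liftable set $X_s\setminus K$ versus $X_t\setminus K$, and the guard $i<k$) are exactly the points the paper's proof also relies on.
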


\begin{proof} We first prove that $\mbox{\bf make\_join}({\cal F}_{s})\subseteq {\cal F}_t$. 
Let $(i+1,K \cup \{v\},{\cal P})\in \mbox{\bf make\_join}({\cal F}_{s})$ with $v \in R$ and $i<k$ (the other case is similar).
We prove that $(i+1,K \cup \{v\},{\cal P})\in {\cal F}_t$. \\
By the operation of the procedure {\bf make\_join} we have that there exists a triple $(i, K, {\cal P}) \in {\cal F}_s$ such that ${\cal P}'=\{P\in {\cal P}(X_{t})\mid \zeta_{X_s\setminus K}(P)\cap {\cal P}\neq \emptyset\}$. 
Let $S \subseteq {\cal R}_s^i$ be the annotated vertex set which justifies the existence of $(i,K,{\cal P})$ in ${\cal F}_s$, i.e. $X_s \cap S = K$ and $\sig_s(S) = {\cal P}$.
Now, let $S'=S \cup \{v\}$. Clearly, $S' \subseteq {\cal R}_t^{i+1}$ (where $i+1 \leq k$) and $X_t \cap S' = K \cup \{v\}$.\\
It remains to show that $\sig_t(S') = {\cal P'}$ or, equivalently,
 $\forall P\in {\cal P}(X_t)$ it holds that $P \in \sig_t(S') \Leftrightarrow \zeta_{X_s \setminus K}(P) \cap {\cal P} \neq \emptyset$. 
Let $P \in \sig_t(S')$. We distinguish three cases:
\begin{itemize}
\item {\bf Case 1:} $\deg_P(v) = 0$.
 Then, $P^* = P \setminus \{v\} \in X_s$ and $P = P^* \cup (\{v\}, \emptyset)$ (notice that $(\{v\}, \emptyset) \in {\cal P}_v^{aux}$), 
which means that $P^* \in \zeta_{X_s \setminus K}(P)$. 
It is not hard to confirm that $P^* \in {\cal P}$ because $S$ realizes $P^*$ in $G_s$. It follows that $P \in {\cal P}'$.
\item {\bf Case 2:} $\deg_P(v) = 1$. 
Let $u$ be the only neighbor of $v$ in $P$. 
Then, $P^* = P \setminus \{v\} \in X_s$ and $P = P^* \cup (\{v\}, \{v,u\})$, which means that $P^* \in \zeta_{X_s \setminus K}(P)$. 
Again, $P^* \in {\cal P}$ because $S$ realizes $P^*$ in $G_s$, thus $P \in {\cal P}'$.
\item {\bf Case 3:} $\deg_P(v) = 2$. 
Let $N_P(v)=\{u,w\}$.
Then,  $P^* = P \setminus \{v\} \in X_s$ and $P = P^* \cup (\{v\}, \{\{v,w\}\{v,u\}\})$,
which means that $P^* \in \zeta_{X_s \setminus K}(P)$. 
As before, $S$ realizes $P^*$ in $G_s$, thus $P \in {\cal P}'$.
\end{itemize}
We have showed that $\sig_t(S') \subseteq {\cal P}'$. The converse, ${\cal P}' \subseteq \sig_t(S')$, is clear from the definition of ${\cal P}'$.\\
\noindent To conclude the proof, we have to show that ${\cal F}_{t} \subseteq \mbox{\bf make\_join}({\cal F}_{s})$. 
Let $(i, K, {\cal P})\in {\cal F}_t$. From the definition of ${\cal F}_t$, there exists a vertex set $S \subseteq {\cal R}_t^i$ that realizes every pairing of ${\cal P}$ and $X_t \cap S = K$. Let $P\in {\cal P}$ and assume that $v \notin R$. We consider three cases and the arguments are similar to the previous ones:
\begin{itemize}
\item {\bf Case 1:} $\deg_P(v) = 0$. Then, $P^* = P \setminus \{v\} \in X_s$ and $P = P^* \cup (\{v\}, \emptyset)$, 
which means that $P^* \in \zeta_{X_s \setminus K}(P)$. 
\item {\bf Case 2:} $\deg_P(v) = 1$. 
Let $u$ be the only neighbor of $v$ in $P$. 
Then, $P^* = P \setminus \{v\} \in X_s$ and $P = P^* \cup (\{v\}, \{v,u\})$, which means that $P^* \in \zeta_{X_s \setminus K}(P)$. 
\item {\bf Case 3:} $\deg_P(v) = 2$. 
Let $N_P(v)=\{u,w\}$.
Then,  $P^* = P \setminus \{v\} \in X_s$ and $P = P^* \cup (\{v\}, \{\{v,w\}\{v,u\}\})$,
which means that $P^* \in \zeta_{X_s \setminus K}(P)$. 
\end{itemize}

\noindent Let ${\cal P}^*=\{P^* \in {\cal P}(X_s) \mid P \in {\cal P}\}$. Clearly, for $S^*=S\setminus \{v\} \subseteq {\cal R}_t^i$ and $K^*=X_s \cap S^*$, we have that $\sig(S^*)={\cal P}^*$ and thus $(i-1, K^*, {\cal P}^*)\in {\cal F}_s$.

\noindent The case where $v\in R$ is similar. We conclude that ${\cal F}_{t} \subseteq \mbox{\bf make\_join}({\cal F}_{s})$, which completes the proof. 
\end{proof}

\paragraph{Node $t$ is a {\bf forget node}:} Let $X_t=X_s\setminus \{v\}$, where $s$ is the unique child of $t$ in $T$. 
Then 
$${\cal F}_t=\{(i,K\setminus \{v\},{\cal P})\mid \exists (i,K,{\cal P}')\in {\cal F}_s: \forall P\in {\cal P}(X_t),\ P\in {\cal P}\Leftrightarrow {\sf lift}(P,v)\in {\cal P}'\}$$

The proof that the right part of the above equality is ${\cal F}_t$, is similar to the one of Lemma ~\ref{insert}.

\paragraph{Node $t$ is a {\bf join node}:} Let $s_1$ and $s_2$ be the children of $t$ in $T$. Thus, $X_t = X_{s_1} = X_{s_2}$ and clearly ${\cal P}(X_t)={\cal P}(X_{s_1})={\cal P}(X_{s_2})$. Given a pairing $P\in {\cal P}(X_{t})$, we define 
$$\xi(P)=\{(P_{1},P_{2})\in {\cal P}(X_{t})\times {\cal P}(X_{t})\mid P_{1}\cup P_{2}=P\}$$
Then, ${\cal F}_t$  can be derived from ${\cal F}_{s_1}$ and ${\cal F}_{s_2}$ as follows:
\begin{eqnarray*}
{\cal F}_t & = & \{ (i,K,{\cal P})\mid \exists (i_{1},K_{1},{\cal P}_{1})\in {\cal F}_{s_1}\ \exists (i_{2},K_{2},{\cal P}_{2})\in {\cal F}_{s_2}:\\
&  & \ \ \ \ \ \ \ \ \ \ \  \ \ \  \ i= i_{1}+i_{2}-|K_{1}\cap K_{2}|, \\
&  & \ \ \ \ \ \ \ \ \ \ \  \ \ \  \ K_{1}\cup K_2=K\\
&  & \ \ \ \ \ \ \ \ \ \ \  \ \ \  \ \forall P\in{\cal P}\ \exists (P_{1},P_{2})\in\xi(P): P_{1}\in {\cal P}_{1},P_{2}\in {\cal P}_{2}\}.
\end{eqnarray*}

\begin{lemma} 
In the case where $t$ is a join node with children $s_1$ and $s_2$, ${\cal F}_t$ is computed as described above, given ${\cal F}_{s_1}$ and ${\cal F}_{s_2}$.
\end{lemma}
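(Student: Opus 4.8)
The plan is to establish the claimed formula for $\mathcal{F}_t$ at a join node $t$ by proving the usual two inclusions between the right-hand side (call it $\mathcal{R}$) and $\mathcal{F}_t$, exactly as in the proof of Lemma~\ref{insert}. The key geometric fact is that $X_t = X_{s_1} = X_{s_2}$ is a separator: $G_t = G_{s_1} \cup G_{s_2}$ with $V(G_{s_1}) \cap V(G_{s_2}) = X_t$ and no edges between $V_{s_1}\setminus X_t$ and $V_{s_2}\setminus X_t$. Consequently any sub-cyclic pair $(\mathcal{Q},Z)$ on $t$ restricts to sub-cyclic pairs $(\mathcal{Q}_1,Z_1)$ on $s_1$ and $(\mathcal{Q}_2,Z_2)$ on $s_2$, and after dissolving vertices outside $X_t$ the corresponding pairings satisfy $P_{\mathcal{Q},Z} = P_{\mathcal{Q}_1,Z_1} \cup P_{\mathcal{Q}_2,Z_2}$ (a pairing union in $\mathcal{P}(X_t)$); conversely, gluing two such sub-cyclic pairs along $X_t$ whose pairings union to a pairing $P$ yields a sub-cyclic pair on $t$ realizing $P$. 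This is precisely what the operator $\xi(P)$ records.

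First I would prove $\mathcal{F}_t \subseteq \mathcal{R}$. Take $(i,K,\mathcal{P}) \in \mathcal{F}_t$, witnessed by a set $S \in \mathcal{R}_t^i$ with $X_t \cap S = K$ and $\sig_t(S) = \mathcal{P}$. Set $S_1 = S \cap V_{s_1}$, $S_2 = S \cap V_{s_2}$, $K_j = X_t \cap S_j$, and $i_j = |S_j|$. Since $S \cap X_t = K$ and $S$ splits across the separator with overlap exactly $S \cap X_t$, we get $K_1 \cup K_2 = K$ and $i = i_1 + i_2 - |K_1 \cap K_2|$ by inclusion–exclusion (note $K_1 \cap K_2 = S \cap X_t = K$, so in fact $|K_1\cap K_2| = |K|$, but the stated identity holds regardless). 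Let $\mathcal{P}_j = \sig_{s_j}(S_j)$; then $(i_j, K_j, \mathcal{P}_j) \in \mathcal{F}_{s_j}$ by definition of the table. Finally, for every $P \in \mathcal{P} = \sig_t(S)$, pick a sub-cyclic pair on $t$ realizing $P$ with $S$ on its cycle; its restrictions give $(P_1,P_2) \in \xi(P)$ with $P_j \in \sig_{s_j}(S_j) = \mathcal{P}_j$ (the restricted sub-cyclic pair realizes $P_j$ with $S_j$ on it). This shows $(i,K,\mathcal{P}) \in \mathcal{R}$.

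For the reverse inclusion $\mathcal{R} \subseteq \mathcal{F}_t$, take $(i,K,\mathcal{P})$ arising from $(i_1,K_1,\mathcal{P}_1) \in \mathcal{F}_{s_1}$ and $(i_2,K_2,\mathcal{P}_2) \in \mathcal{F}_{s_2}$ satisfying the three bullet conditions, witnessed by sets $S_1, S_2$. A subtlety here is that $S_1$ and $S_2$ may disagree about which vertices of $X_t$ are in $S$; but the conditions $K_1 \cup K_2 = K$ and the arithmetic on $i$ force $S_1 \cap X_t = K_1$ and $S_2 \cap X_t = K_2$ to be consistent as the traces of a common set $S := S_1 \cup S_2$, with $S \cap X_t = K$ and $|S| = i$. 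Now for each $P \in \mathcal{P}$ choose $(P_1,P_2) \in \xi(P)$ with $P_j \in \mathcal{P}_j$; lift the realizing sub-cyclic pairs on $s_1$ and $s_2$ and glue them along $X_t$ — here one must check the glued object is still a \emph{sub-cyclic} pair, i.e. still the restriction of an actual cycle: since each $P_j$ is a pairing and their union $P$ is a pairing (with at most one cycle), the glued paths and cycles close up into at most one cycle with the rest being paths, so the result is genuinely sub-cyclic and has $S$ on its non-path/cycle part. Hence $S$ realizes $P$ in $G_t$ for all $P \in \mathcal{P}$, so $(i,K,\mathcal{P}) \in \mathcal{F}_t$.

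The \textbf{main obstacle} I anticipate is the gluing direction: verifying that when two sub-cyclic pairs whose dissolved pairings union to a legitimate pairing $P \in \mathcal{P}(X_t)$ are glued along the separator, one really obtains the restriction of a single cycle (not a union of several vertex-disjoint cycles, and not a configuration with a vertex of degree $\geq 3$). The definition of a pairing — unique cycle, all off-cycle vertices of degree $\leq 2$, vertex-less loop allowed — is tailored to make exactly this go through, but the case analysis on how a path-endpoint or a partial cycle in one side matches the corresponding object in the other side (accounting for the vertex-less loop representing a fully-interior closed cycle) is the delicate bookkeeping. I would handle it by the same degree-based case split used in Lemma~\ref{insert}, tracking for each vertex of $X_t$ its degree in $P_1$ and in $P_2$ and checking the sum is $\leq 2$ precisely when $(P_1,P_2)\in\xi(P)$, and confirming connectivity of the resulting cycle via condition (iii) of the MSOL characterization applied locally.
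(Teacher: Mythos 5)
Your proposal is correct and follows essentially the same route as the paper: the forward inclusion is proved exactly as in the paper by restricting a witnessing sub-cyclic pair across the separator $X_t$ to obtain sub-cyclic pairs on $s_1$ and $s_2$, reading off $K_1\cup K_2=K$, the inclusion–exclusion identity for $i$, and $(P_1,P_2)\in\xi(P)$. The only difference is one of emphasis — the paper declares the gluing direction trivial and omits it, whereas you treat it as the delicate step and sketch the degree bookkeeping; your sketch is consistent with what the paper implicitly relies on.
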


\begin{proof} Let ${\cal U}_t = \{(i,K,{\cal P}) \mid  \exists (i_{1},K_{1},{\cal P}_{1})\in {\cal F}_{s_1} \quad \exists (i_{2},K_{2},{\cal P}_{2})\in {\cal F}_{s_2}:
 i= i_{1}+i_{2}-|K_{1}\cap K_{2}|, \quad
K_{1}\cup K_2=K
\text{ and } \forall P\in{\cal P} \quad \exists (P_{1},P_{2})\in\xi(P): P_{1}\in {\cal P}_{1},P_{2}\in {\cal P}_{2}\}$.\\
We will only prove the nontrivial direction: ${\cal F}_t \subseteq {\cal U}_t$. Let $(i,K,{\cal P}) \in {\cal F}_t$. From the definition of ${\cal F}_t$, there exists a vertex set $S \subseteq {\cal R}_t^i$ that realizes every pairing of ${\cal P}$ and $X_t \cap S = K$. Let $P$ be any pairing of ${\cal P}$. Then, there exists a sub-cyclic pair $({\cal Q},Z)$ on $t$ that corresponds to pairing $P$. The restriction of $({\cal Q},Z)$ in $G_{s_1}$ (resp. $G_{s_2}$) is a sub-cyclic pair $({\cal Q}_1,Z_1)$ on $s_1$ (resp. $({\cal Q}_2,Z_2)$ on $s_2$) and clearly $Z_1 \subseteq X_{s_1}$ (resp. $Z_2 \subseteq X_{s_2}$. These sub-cyclic pairs meet some subsets $S_1$ and $S_2$ of $S$ respectively and correspond to parings $P_1\in {\cal P}_1$ and $P_2\in {\cal P}_2$.\\
Let $|S_1|=i_1$ and $|S_2|=i_2$. It is now easy to confirm that $i=i_1+i_2-|K_1 \cap K_2|$, $K_1 \cup K_2 = Z_1 \cup Z_2= K$ and that $(P_1,P_2)\in \xi(P)$.\\
As $P\in {\cal P}$ was chosen arbitrarily we conclude that $(i,K,{\cal P})\in {\cal U}_t$ and we are done. 
\end{proof}
\medskip

The dynamic programming algorithm that we described runs in $2^{2^{O(w\cdot \log w)}}\cdot n$ steps (where $w$ is the width of the tree decomposition) and solves {\sc Cyclability}.

\section{Hardness of the Cyclability Problem}
\label{hardness}
In this section, we examine the hardness of {\sc Cyclability}.

\subsection{Hardness for general graphs}
First, we show that it is unlikely that  {\sc Cyclability} is {\sf FPT} by 
proving Theorem~\ref{thm:W} (mentioned in the introduction).
For this, we first introduce some further notation.

A \emph{matching} is a set of pairwise non-adjacent edges. A vertex $v$ is \emph{saturated} in a matching $M$ if $v$ is incident to an edge of $M$.
By $x_1\ldots x_p$ we denote the path  with the vertices $x_1,\ldots,x_p$ and the edges $\{x_1,x_2\},\ldots,\{x_{p-1},x_p\}$, and we use 
$x_1\ldots x_px_1$ to denote the cycle with the vertices $x_1,\ldots,x_p$ and the edges $\{x_1,x_2\},\ldots,\{x_{p-1},x_p\},\{x_p,x_1\}$.
For a path $P=x_1\ldots x_p$ and a vertex $y,$ $yP$ ($Py$ resp.) is the path $yx_1\ldots x_p$ ($x_1\ldots x_py$ resp.).
If $P_1=x_1\ldots x_p$ and $P_2=y_1\ldots y_q$ are paths such that $V(P_1)\cap V(P_2)=\{x_p\}=\{y_1\},$ then
$P_1+P_2$ is the \emph{concatenation} of $P_1$ an $P_2,$ i.e., the path $x_1\ldots x_{p-1}y_1\ldots y_q$.
\medskip

\noindent We need some auxiliary results. The following lemma is due to Erd{\H{o}}s~\cite{Erdos62}. Define the function $f(n,\delta)$ by
$$f(n,\delta)=
\begin{cases}
\binom{n-\delta}{2}+\delta^2&\mbox{if } n\geq 6\delta-2,\\
\binom{(n+1)/2}{2}+(\frac{n-1}{2})^2&\mbox{if } n\leq 6\delta-3 \mbox{ and } n \mbox { is odd},\\
\binom{(n+2)/2}{2}+(\frac{n-2}{2})^2&\mbox{if } n\leq 6\delta-4 \mbox{ and } n \mbox { is even}.
\end{cases}
$$

\begin{lemma}[\cite{Erdos62}]\label{lem:erdos}
Let $G$ be a graph with $n\geq 3$ vertices. If $\delta(G)\geq n/2$ or $|E(G)|>f(n,\delta(G)),$ then $G$ is Hamiltonian.
 \end{lemma}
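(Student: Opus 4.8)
The plan is to derive the statement from Chv\'atal's degree-sequence criterion, which simultaneously yields the ``$\delta(G)\ge n/2$'' part (this is precisely Dirac's theorem) and makes the extremal edge count work out. Recall Chv\'atal's theorem: if $G$ has $n\ge 3$ vertices and degree sequence $d_1\le d_2\le\cdots\le d_n$ and there is \emph{no} index $i$ with $1\le i<n/2$, $d_i\le i$, and $d_{n-i}\le n-i-1$, then $G$ is Hamiltonian. I would either cite it or recall its short self-contained proof via the Bondy--Chv\'atal closure (iteratively add an edge between two non-adjacent vertices whose degree sum is at least $n$; the closure is Hamiltonian iff $G$ is, and a non-complete closed graph that violates the conclusion is seen to produce such a ``bad index''). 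The case $\delta(G)\ge n/2$ is then immediate, since for every $i<n/2$ we have $d_i\ge\delta(G)\ge n/2>i$, so the forbidden configuration cannot occur.

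For the remaining case I argue the contrapositive: assume $G$ is non-Hamiltonian with $\delta(G)\ge\delta$, and show $|E(G)|\le f(n,\delta)$. By Chv\'atal's theorem there is an index $i$ with $1\le i<n/2$, $d_i\le i$, and $d_{n-i}\le n-i-1$; since $d_i\ge d_1=\delta(G)\ge\delta$, this index satisfies $\delta\le i\le\lfloor (n-1)/2\rfloor$. Splitting the degree sequence into the blocks $d_1,\dots,d_i$, then $d_{i+1},\dots,d_{n-i}$, then $d_{n-i+1},\dots,d_n$, and bounding each of the first $i$ terms by $i$, each of the next $n-2i$ terms by $n-i-1$, and each of the last $i$ terms by $n-1$, gives
\[
2|E(G)|\ \le\ i^2+(n-2i)(n-i-1)+i(n-1)\ =\ 3i^2+(1-2n)i+(n^2-n)\ =:\ \phi(i).
\]

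Now $\phi$ is a convex quadratic in $i$ (its vertex lies near $i=n/3$), so over the interval $\delta\le i\le\lfloor(n-1)/2\rfloor$ it is maximized at one of the two endpoints. A direct computation gives $\tfrac12\phi(\delta)=\binom{n-\delta}{2}+\delta^2$, and $\tfrac12\phi(\lfloor(n-1)/2\rfloor)$ equals $\binom{(n+1)/2}{2}+\bigl(\tfrac{n-1}{2}\bigr)^2$ when $n$ is odd and $\binom{(n+2)/2}{2}+\bigl(\tfrac{n-2}{2}\bigr)^2$ when $n$ is even. Comparing the two endpoint values — the sign of $\phi(\delta)-\phi(\lfloor(n-1)/2\rfloor)$ is governed by whether $3\delta+3\lfloor(n-1)/2\rfloor+1-2n$ is nonnegative — shows that the endpoint $i=\delta$ dominates exactly in the range $n\ge 6\delta-2$ and the endpoint $i=\lfloor(n-1)/2\rfloor$ dominates when $n\le 6\delta-3$, matching the case split in the definition of $f$. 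In every case $2|E(G)|\le 2f(n,\delta)$, which is what we wanted.

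The one place that needs real care — and where I expect most of the work to sit — is this last comparison: Erd\H{o}s's three-case function $f(n,\delta)$ is tuned tightly, so matching it requires exact integer arithmetic at the boundary values $n\in\{6\delta-3,6\delta-2\}$, the parity split that separates the second and third cases of $f$, and a check of the degenerate ranges (e.g.\ $n$ close to $2\delta$, or $\delta$ close to $n/2$, where the interval $[\delta,\lfloor(n-1)/2\rfloor]$ collapses and the edge bound holds trivially). Everything there is elementary, but the heuristic ``$i\approx n/3$'' is not by itself enough to pin down the thresholds.
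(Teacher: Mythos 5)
The paper offers no proof of this lemma at all --- it is quoted directly from Erd\H{o}s's 1962 paper --- so the only comparison available is with Erd\H{o}s's original argument, which is a self-contained P\'osa/Ore-style exchange argument predating Chv\'atal's theorem by a decade. Your route through Chv\'atal's degree-sequence condition is genuinely different, and it is correct; I checked the arithmetic. The contrapositive of Chv\'atal yields an index $i$ with $\delta\le i\le m:=\lfloor(n-1)/2\rfloor$, and this interval is automatically nonempty because the case $\delta(G)\ge n/2$ is already dispatched by the Dirac part. The blockwise degree bound gives $2|E(G)|\le\phi(i)=3i^2+(1-2n)i+n^2-n$, convexity reduces the problem to the endpoints, $\tfrac12\phi(\delta)=\binom{n-\delta}{2}+\delta^2$ and $\tfrac12\phi(m)$ reproduce the second and third branches of $f$, and the identity $\phi(\delta)-\phi(m)=(\delta-m)\bigl(3(\delta+m)+1-2n\bigr)$ confirms that the larger endpoint is selected exactly as in the case split of $f$. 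One small imprecision: for even $n$ the endpoint $i=\delta$ already ties with $i=m$ at $n=6\delta-4$ (not only for $n\ge6\delta-2$), but since the two endpoint values coincide there, $f$ is still the correct maximum and the bound $|E(G)|\le f(n,\delta(G))$ survives. What your approach buys is a short, fully checkable derivation resting on a single classical theorem; what it costs is that Chv\'atal's theorem itself remains a citation (or the sketched closure argument), whereas Erd\H{o}s's proof is elementary and self-contained --- for the purposes of this paper, where the lemma is itself only cited, either is acceptable.
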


\begin{lemma}\label{lem:ham}
Let $k\geq 75$ be an odd integer and let $H$ be a graph such that 
\begin{itemize}
\item[i)] $(k-2)(k-3)/2<|E(H)|\leq k(k-1)/2+1,$
\item[ii)] $\delta(H)\geq (k-1)/2,$
\item[iii)] there is a set $S\subseteq E(H)$ such that $|S|>(k-2)(k-3)/2$ and $G[S]$ has at most $k+2$ vertices.
\end{itemize}
Then $H$ is Hamiltonian.
\end{lemma}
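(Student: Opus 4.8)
The plan is to derive Hamiltonicity of $H$ by a careful case analysis based on the number of vertices $n = |V(H)|$, in each case applying Lemma~\ref{lem:erdos} with an appropriate lower bound on $\delta(H)$ or on $|E(H)|$. First I would observe that since $G[S]$ has at most $k+2$ vertices while $|S| > (k-2)(k-3)/2$, the subgraph $G[S]$ is very dense, and the edges of $S$ are concentrated on few vertices; combined with $\delta(H) \geq (k-1)/2$ this forces $n$ to be relatively small. More precisely, I would note that $H$ has $|E(H)| \leq k(k-1)/2 + 1$ edges and minimum degree at least $(k-1)/2$, so by the handshake inequality $n(k-1)/2 \leq 2|E(H)| \leq k(k-1) + 2$, giving roughly $n \leq 2k + \text{(small term)}$; getting a clean explicit upper bound on $n$ (something like $n \leq 2k+2$ or so, using that $k \geq 75$) is the first concrete step.

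Next I would split into two regimes according to the threshold $n \geq 6\delta - 2$ appearing in the definition of $f(n,\delta)$, where $\delta = \delta(H) \geq (k-1)/2$. If $n$ is small relative to $\delta$ — in particular when $\delta(H) \geq n/2$ — then Lemma~\ref{lem:erdos} applies immediately and $H$ is Hamiltonian, so the interesting case is when $\delta(H) < n/2$, i.e., $n > 2\delta(H)$. In that range I would aim to show $|E(H)| > f(n, \delta(H))$. Here the hypothesis (iii) is the crucial lever: the $> (k-2)(k-3)/2$ edges inside the $\leq k+2$ vertices of $G[S]$ contribute a near-maximal clique-like amount, and I would want to bound $f(n,\delta)$ from above by comparing it to $\binom{n-\delta}{2} + \delta^2$ (or the two small-$n$ variants), then show that the edge count forced by (i) and (iii) strictly exceeds this. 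The arithmetic should work out because $(k-2)(k-3)/2$ is only $O(k)$ below $\binom{k}{2}$, while $f(n,\delta)$ with $n$ around $2k$ and $\delta$ around $k/2$ is of order $\binom{3k/2}{2} + k^2/4$ type — wait, that could actually be larger, so the argument must genuinely use that the edges live on few vertices, not merely count them.

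That tension is exactly where I expect the main obstacle to be: $f(n,\delta)$ for $n \approx 2k$, $\delta \approx k/2$ can be on the order of $k^2$, comparable to $|E(H)|$, so a crude global edge count will not suffice. The resolution I would pursue is to use (iii) to argue that outside the $k+2$ vertices carrying the dense set $S$, the remaining $n - (k+2)$ vertices each have degree $\geq (k-1)/2$, forcing many additional edges, and that the total then overshoots $f(n,\delta)$; alternatively, one shows directly that $n$ cannot in fact be as large as $2k$ — the dense clique on $\leq k+2$ vertices uses up nearly all the edge budget $k(k-1)/2+1$, leaving essentially no edges for vertices outside it, which contradicts $\delta(H) \geq (k-1)/2$ unless $n$ is only slightly more than $k$. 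So the cleanest route is probably: (a) show $n \leq k + O(1)$ using (i) and (iii) together with the minimum-degree bound on vertices outside $G[S]$; (b) with such a small $n$, fall into the small-$n$ branches of $f(n,\delta)$ and verify $|E(H)| > f(n,\delta(H))$ using (i) directly, or observe $\delta(H) \geq (k-1)/2 \geq n/2$ outright. The condition $k \geq 75$ is presumably there to absorb all the additive constants in these estimates, so I would keep the bounds explicit enough to check that $k \geq 75$ is sufficient, but not worry about optimizing it.
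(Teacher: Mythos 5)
Your final plan --- use (iii) together with the edge budget from (i) and the minimum-degree condition on the vertices outside $V(G[S])$ to force $n\leq k+O(1)$ (the paper shows $|V(H)\setminus V(G[S])|\leq 7$, hence $n\leq k+9$), and then verify $|E(H)|>f(n,\delta(H))$ via the small-$n$ branches of Erd\H{o}s's bound --- is exactly the paper's proof. One caveat: your fallback ``$\delta(H)\geq (k-1)/2\geq n/2$ outright'' fails in general, since $n$ can be as large as $k+9$, so the explicit $f(n,\delta)$ computation (which is where $k\geq 75$ is used) is genuinely needed.
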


\begin{proof}
Let $H$ be an $n$-vertex graph that satisfies the above three conditions.
Let $S\subseteq E(H)$ be a set such that $|S|>(k-2)(k-3)/2$ and $G[S]$ has at most $k+2$ vertices. Let also $U=V(H)\setminus V(G[S])$. Denote by $R$ the set of edges of $G$ incident to vertices of $U$. Since 
 $|S|>(k-2)(k-3)/2$ and $|E(H)|\leq k(k-1)/2+1,$ $|R|\leq 2k-3$. Because $\delta(H)\geq (k-1)/2,$ $|R|\geq |U|\delta(H)/2\geq |U|(k-1)/4$. We have that $|U|\leq 7,$ i.e., $H$ has at most $k+9$ vertices. Then because $k\geq 75,$ we obtain that $n\geq 6\delta(G)-3,$  
$$\binom{(n+1)/2}{2}+\Big(\frac{n-1}{2}\Big)^2\leq\frac{(k-2)(k-3)}{2}<|E(H)|$$
and
$$\binom{(n+2)/2}{2}+\Big(\frac{n-2}{2}\Big)^2\leq\frac{(k-2)(k-3)}{2}<|E(H)|.$$
We have that $|E(H)|>f(n,\delta(H)),$ and 
by Lemma~\ref{lem:erdos}, $H$ is Hamiltonian.
\end{proof}

\noindent We are now in the position to prove Theorem \ref{thm:W}:

\begin{proof}[Proof of Theorem~\ref{thm:W}]
We reduce the {\sc Clique} problem.
Recall that {\sc Clique} asks for a graph $G$ and a positive integer $k,$ whether $G$ has a clique of size $k$. This problem is well known to be {\sf W[1]}-complete~\cite{DowneyF13} when parameterized by $k$. Notice that {\sc Clique} remains {\sf W[1]}-complete when restricted to the instances where $k$ is odd. To see it, it is sufficient to observe that if the graph $G'$ is obtained from a graph $G$ by adding a vertex adjacent to all the vertices of $G,$ then $G$ has a clique of size $k$ if and only if $G'$ has a clique of size $k+1$. Hence, any instance of {\sc Clique} can be reduced to the instance with an odd value of the parameter. Clearly, the problem is still {\sf W[1]}-hard if the parameter $k\geq c$ for any constant $c$.

Let $(G,k)$ be an instance of {\sc Clique} where $k\geq 75$ is odd. We construct the graph $G'_k$ as follows.
\begin{itemize}
\item For each vertex $x\in V(G),$ construct $s=(k-1)/2$ vertices $v_x^i$ for $i\in\{1,\ldots,s\}$ and form a clique 
of size $ns$ from all these vertices by joining them by edges pairwise.
\item Construct a vertex $w$ and edges $\{w,v_x^i\}$ for $x\in V(G),$ $i\in\{1,\ldots,s\}$.
\item For each edge $\{x,y\}\in E(G),$ construct the vertex $u_{xy}$ and the edges $\{u_{xy},v_{x}^i\},$
$\{ u_{xy},v_y^i\}$ for $i\in\{1,\dots,s\}$; we assume that $u_{xy}=u_{yx}$.
 \end{itemize}
Let $k'=k(k-1)/2+1$. It is straightforward to see that $G'$ is a split graph.
We show that $G$ has a clique of size $k$ if and only if there are $k'$ vertices in $G'_k$ such that there is no cycle in $G'_k$ that contains these $k'$ vertices.

Suppose that $G$ has a clique $X$ of size $k$. Let $Y=\{u_{xy}\in V(G')|x,y\in X,x\neq y\}$ and $Z=Y\cup \{w\}$. Because $|X|=k,$ $|Z|=k(k-1)/2+1=k'$. Observe that $Y$ is an independent set in $G'_k$ and $|Y|=|N_{G'}(Y)|$. Hence, for any cycle $C$ in $G'_k$ such that $Y\subseteq V(C),$ $V(C)\subseteq Y\cup N_{G'_k}(Y)$. Because $w\notin Y\cup N_{G'_k}(Y),$ $w$ does not belong to any cycle that contains the vertices of $Y$. We have that no cycle in $G'_k$ contains $Z$ of size $k'$.

Now we show that if $G$ has no cliques of size $k,$ then for any $Z\subseteq V(G'_k)$ of size $k',$ there is a cycle $C$ in $G'_k$ such that $Z\subseteq V(C)$. We use the following claim.

\medskip
\noindent
{\bf Claim.} {\it 
Suppose that $G$ has no cliques of size $k$.  Then for any non-empty $Z\subseteq \{u_{xy}|x,y\in V(G)\}$ of size at most  $k(k-1)/2+1,$ there is a cycle $C$ in $G'_k$ such that $Z\subseteq V(C)\subseteq Z\cup N_G(Z)$ and
$C$ has an edge $\{v_x^i,v_y^j\}$ for some $x,y\in V(G)$ and $i,j\in\{1,\ldots,s\}$.
}

\begin{proof}[of Claim]
For a set  $Z\subseteq \{u_{xy}|x,y\in V(G)\},$ we denote by $S(Z)$ the set of edges $\{\{x,y\}\in E(G)|u_{xy}\in Z\},$ and $H(Z)=G[S(Z)]$.

If $Z=\{u_{xy}\},$ then the triangle $u_{xy}v_{x}^1v_x^2u_{xy}$ is a required cycle, and the claim holds. Let $r=|Z|\geq 2$ and assume inductively that the claim is fulfilled for smaller sets. 

Suppose that $H(Z)$ has a vertex $x$ with ${\rm deg}_{H(Z)}(x)\leq (k-3)/2$. Let $N_{H(Z)}(x)=\{y_1,\ldots,y_t\}$. Notice that $t\leq (k-3)/2=s-1$.
 Denote by $Z'$ the set obtained from $Z$ by the deletion of $u_{xy_1},\ldots,u_{xy_t},$ and let $H'=H(Z')$. If $Z'=\emptyset,$ then the cycle $C=v_x^1u_{xy_1}v_x^2\ldots v_x^tu_{xy_t}v_x^{t+1}v_x^1$ satisfies the conditions and the claim holds. Suppose that $Z'\neq \emptyset$. Then, by induction, there is a cycle $C'$ in $G'_k$ such that $Z\subseteq V(C')\subseteq Z\cup N_G(Z)$ and $C'$ has an edge $\{v_a^i,v_b^j\}$ for some $a,b\in V(G)$ and $i,j\in\{1,\ldots,s\}$. We consider the path 
$P=v_x^1u_{xy_1}v_x^2\ldots v_x^tu_{xy_t}v_x^{t+1}$. Then we delete $\{v_a^i,v_b^j\}$ and replace it by the path $v_a^iPv_b^j$. Denote the obtained cycle by $C$. It is straightforward to verify that 
 $Z\subseteq V(C)\subseteq Z\cup N_G(Z)$ and $\{v_a^i,v_x^1\}\in E(C),$ i.e., the claim is fulfilled.

From now we assume that $\delta(H(Z))\geq (k-1)/2$. We consider three cases.

\medskip
\noindent{\bf Case 1.} $r\leq (k-2)(k-3)/2$.  

Consider the graph $G_{k-2}'$. We show that this graph has a matching $M$ of size $r$ such that every vertex of $Z$ is saturated in $M$. By the Hall's theorem (see, e.g., \cite{Diestel10}), it is sufficient to show that for any $Z'\subseteq Z,$ $|Z'|\leq |N_{G_{k-2}'}(Z')|$. Let $p$ be the smallest positive integer such that $|Z'|\leq p(p-1)/2$. 
By the definition of $G_{k-2}',$ $|N_{G_{k-2}'}(Z')|\geq p(k-3)/2$. Because $p\leq k-2,$  $|Z'|\leq p(p-1)/2 \leq p(k-3)/2\leq  |N_{G_{k-2}'}(Z')|$.

Let $M$ be a matching in $G_{k-2}\rq{}$ of size $r$ such that  every vertex of $Z$ is saturated in $M$. Clearly, $M$ is a matching in $G_k\rq{}$ that saturates $Z$ as well.
Let $x_1,\ldots,x_q$ be the vertices of $G$ such that for $i\in\{1,\ldots,q\},$ $\{v_{x_i}^1,\ldots,v_{x_i}^s\}$ contains saturated in $M$ vertices. Because $v_{x_i}^1,\ldots,v_{x_i}^s$ have the same neighborhoods, we assume without loss of generality that for  $i\in\{1,\ldots,q\},$ $v_{x_i}^1,\ldots,v_{x_i}^{t_i}$ are saturated. Observe that since $M$ is a matching in $G_{k-2}\rq{},$ $t_i\leq s-1$.  For $i\in\{1,\ldots,q\}$ and $j\in\{1,\ldots,t_i\},$ denote by $u_i^j$ the vertex of $Z$ such that $\{v_{x_i}^j,u_i^j\}\in M$. We define the path $P_i=v_{x_i}^1u_i^1v_{x_i}^2\ldots u_i^{t_i}v_{x_i}^{t_i+1}$ for $i\in\{1,\ldots,q\}$.   As all the vertices $v_{x_i}^j$ are pairwise adjacent, by adding the edges $\{v_{x_1}^{t_1+1},v_{x_2}^{1}\},\ldots, \{v_{x_{q-1}}^{t_{q-1}+1},v_{x_q}^{1}\},
\{v_{x_q}^{s_q+1},v_{x_1}^{1}\}$, we obtain from the the paths $P_1,\ldots,P_q$ a cycle. Denote it by $C$.  
We have that  $Z\subseteq V(C)\subseteq Z\cup N_G(Z)$ and $\{v_{x_1}^{t_1+1},v_{x_2}^{1}\} \in E(C),$ and we conclude that the claim holds.

\medskip
\noindent{\bf Case 2.} $(k-2)(k-3)/2<r$ and for any $S\subseteq E(H(Z))$ such that $|S|>(k-2)(k-3)/2,$ $H(Z)[S]$ has at least $k+3$ vertices.

We use the same approach as in Case 1 and show that $G_{k-2}'$ has a matching $M$ of size $r$ such that every vertex of $Z$ is saturated in $M$. We have to show that for 
any $Z'\subseteq Z,$ $|Z'|\leq |N_{G_{k-2}'}(Z')|$. If $|Z\rq{}|\leq (k-2)(k-3)/2,$ we use exactly the same arguments as in Case 1. Suppose that  $|Z\rq{}|> (k-2)(k-3)/2$.
Then $|S(Z\rq{})|=|Z'|> (k-2)(k-3)/2$. Hence, $H(Z)[S(Z\rq{})]$ has at least $k+3$ vertices. It implies that $|N_{G_{k-2}'}(Z')|\geq (k+3)(k-3)/2$. Because $k\geq 75$ and $|Z\rq{}|\leq r\leq k(k-1)/2+1,$
$|N_{G_{k-2}'}(Z')|\geq (k+3)(k-3)/2\geq  k(k-1)/2+1\geq |Z\rq{}|$. Given a matching $M$ that saturates $Z,$ we construct a cycle 
that contains $Z$ in exactly the same way as in Case 1 and prove that the claim holds.

\medskip
\noindent{\bf Case 3.} $(k-2)(k-3)/2<r$ and there is $S\subseteq E(H(Z))$ such that $|S|>(k-2)(k-3)/2$ and $H(Z)[S]$ has at most $k+2$ vertices. 

By Lemma~\ref{lem:ham}, $H(Z)$ is Hamiltonian. Let $p=|V(H(Z))|$ and denote by $R=x_1\ldots x_px_1$ a Hamiltonian cycle in $H(Z)$.  Let $U=\{u_{x_1x_2},\ldots,u_{x_{p-1}x_1}\}$ and let
$Z\rq{}=Z\setminus U$.

We again consider $G_{k-2}'$. We show that this graph has a matching $M$ of size $|Z'|$ such that every vertex of $Z'$ is saturated in $M$. We have to prove that  for 
any $Z''\subseteq Z',$ $|Z''|\leq |N_{G_{k-2}'}(Z'')|$. If $|Z''|\leq (k-2)(k-3)/2,$ we use exactly the same arguments as in Case 1. Suppose that  $|Z''|> (k-2)(k-3)/2$. Let $q$ be the smallest positive integer such that $|Z''|\leq q(q-1)/2$. Clearly, $q>k-2$.
We consider the following three cases depending on the value of $q$.

\medskip
\noindent{\bf Case a.} $q=k-1$. Then $H(Z'')$ has at least $k-1$ vertices and at least $(k-2)(k-3)/2+1$ edges.
Because $|Z|\leq k(k-1)/2+1,$  $H(Z)$ has  at most $2k-3$ edges that are not edges of $H(Z'')$.  Because $\delta(H(Z))\geq (k-1)/2$ and $k\geq 75,$ $H(Z)$ has at most 4 vertices that are not adjacent to the edges of $H(Z'')$.  Then at most 8 edges of the Hamiltonian cycle $R$ in $H(Z)$ do not join vertices of $H(Z'')$ with each other. We obtain that at least $k-9$ edges of $R$ join 
vertices of $H(Z'')$ with each other. 

Suppose that $H(Z'')$ has $k-1$ vertices. 
Then $|Z''|\leq (k-1)(k-2)/2-(k-9)\leq (k^2-5k+20)/2$. 
Because $H(Z'')$ has $k-1$ vertices, $|N_{G_{k-2}'}(Z'')|=(k-1)(k-3)/2$.
Since $k\geq 75,$ $|Z''|\leq |N_{G_{k-2}'}(Z'')|$.  

Suppose that $H(Z'')$ has $k$ vertices. If $H(Z)$ has a vertex $x$ that is not adjacent to the edges of $H(Z''),$ then at least $(k-1)/2$ vertices of $Z$ that correspond to the edges incident to $x$ are not in $Z''$. Then 
$|Z''|\leq |Z|-(k-1)/2-(k-9)\leq(k^2-4k+21)/2$. Because $|N_{G_{k-2}'}(Z'')|=k(k-3)/2$ and $k\geq 75,$ $|Z''|\leq |N_{G_{k-2}'}(Z'')|$. If $H(Z)$ has no vertex that is not adjacent to the edges of $H(Z''),$ then the edges of $R$ join vertices of $H(Z'')$ with each other.
We have that $|Z''|\leq k(k-1)/2-k=k(k-3)/2$ and $|Z''|\leq |N_{G_{k-2}'}(Z'')|$.

Finally, if $H(Z'')$ has at least $k+1$ vertices, then $|N_{G_{k-2}'}(Z'')|\geq (k+1)(k-3)/2\geq (k-1)(k-2)/2\geq |Z''|$.

\medskip
\noindent{\bf Case b.} $q=k$. Then $H(Z'')$ has at least $k$ vertices and at least $(k-1)(k-2)/2+1$ edges.
Because $|Z|\leq k(k-1)/2+1,$  $H(Z)$ has  at most $k-1$ edges that are not edges of $H(Z'')$.  
Because $\delta(H(Z))\geq (k-1)/2$ and $k\geq 75,$ $H(Z)$ has at most 2 vertices that are not adjacent to the edges of $H(Z'')$.  
Then at most 4 edges of the Hamiltonian cycle $R$ in $H(Z)$ do not join vertices of $H(Z'')$ with each other. We obtain that at least $k-4$ edges of $R$ join 
vertices of $H(Z'')$ with each other. 

Suppose that $H(Z'')$ has $k$ vertices. If $H(Z)$ has a vertex $x$ that is not adjacent to the edges of $H(Z''),$ then at least $(k-1)/2$ vertices of $Z$ that correspond to the edges incident to $x$ are not in $Z''$. Then 
$|Z''|\leq |Z|-(k-1)/2-(k-4)\leq(k^2-4k+11)/2$. Because $|N_{G_{k-2}'}(Z'')|=k(k-3)/2$ and $k\geq 75,$ $|Z''|\leq |N_{G_{k-2}'}(Z'')|$. If $H(Z)$ has no vertex that is not adjacent to the edges of $H(Z''),$ then the edges of $R$ join vertices of $H(Z'')$ with each other.
We have that $|Z''|\leq k(k-1)/2-k=k(k-3)/2$ and $|Z''|\leq |N_{G_{k-2}'}(Z'')|$.

Suppose that $H(Z'')$ has at least $k+1$ vertices. Then $R$ has at least $k+1$ edges and $|Z'|\leq |Z|-(k+1)\leq k(k-3)/2$.
As $|N_{G_{k-2}'}(Z'')|\geq (k+1)(k-3)/2,$  $|Z''|\leq |N_{G_{k-2}'}(Z'')|$.

\medskip
\noindent{\bf Case c).} $q\geq k+1$. Then $H(Z'')$ has at least $k+1$ vertices. We have that $R$ has at least $k+1$ edges and $|Z'|\leq |Z|-(k+1)\leq k(k-3)/2$. Because $|N_{G_{k-2}'}(Z'')|\geq (k+1)(k-3)/2,$  $|Z''|\leq |N_{G_{k-2}'}(Z'')|$.

\medskip
We conclude that  for any $Z''\subseteq Z',$ $|Z''|\leq |N_{G_{k-2}'}(Z'')|$. Hence, 
$G_{k-2}'$ has a matching $M$ of size $r$ such that every vertex of $Z'$ is saturated in $M$.

Clearly, $M$ is a matching in $G_k'$ as well. Recall that $R=x_1\ldots x_px_1$ is a Hamiltonian cycle in $H(Z)$
and $U=\{u_{x_1x_2},\ldots,u_{x_{p-1}x_1}\}$. For $i\in\{1,\ldots,p\},$ let $t_i$ be the number of vertices in $\{v_{x_i}^1,\ldots,v_{x_i}^s\}$ that are saturated in $M$. Because $M$ is a matching in $G_{k-1}',$ $t_i\leq s-1$. 

We prove that there is $j\in\{1,\ldots,p\}$ such that $t_j<s-1$. Let $q$ be the smallest positive integer such that $|Z|\leq q(q-1)/2$.
The graph $H(Z)$ has at least $q$ vertices. Suppose first that it has exactly $q$ vertices. Then $p=q$ and $Z'=Z\setminus U$ has at most $p(p-1)/2-p=p(p-3)/2$ vertices. Also $|N_{G_{k-2}'}(Z)|=p(k-3)/2$. If $p<k,$ at least one vertex in $N_{G_{k-2}'}(Z)$ is not saturated and the statement holds. Let $p=k$. Then because $G$ has no cliques of size $k,$ $|Z|<k(k-1)/2$ and $|Z'|<k(k-3)/2$. We have that $|Z'|<|N_{G_{k-2}'}(Z)|$ and at least one vertex in $N_{G_{k-2}'}(Z)$ is not saturated. If $p\geq k+1,$ then 
$|Z|=k(k-1)/2+1$. We have that $|Z'|\leq k(k-3)/2$ and $|N_{G_{k-2}'}(Z)|\geq (k+1)(k-3)/2$. Hence, the there is a non-saturated vertex in  $N_{G_{k-2}'}(Z)$.
Suppose now that $H(Z)$ has at least $q+1$ vertices. Then $p\geq q+1$ and $|Z'|\leq q(q-1)/2-(q+1)=q(q-3)/2-1$. As  $|N_{G_{k-2}'}(Z)|\geq (q+1)(k-3)/2,$ $|Z'|<|N_{G_{k-2}'}(Z)|$ if $q\leq k$. If $q\geq k+1,$ then $|Z'|\leq |Z|-(k+2)\leq
(k(k-1)/2+1) - (k+2)\leq k(k-3)/2-1$. Because $|N_{G_{k-2}'}(Z)|\geq (k+2)(k-3)/2,$ we again have 
a  non-saturated vertex in  $N_{G_{k-2}'}(Z)$. We considered all cases and conclude that at least one vertex of 
$N_{G_{k-2}'}(Z)$ is not saturated in $M$. Hence, there is $j\in\{1,\ldots,p\}$ such that $t_j<s-1$. Without loss of generality we assume that $j=p$. 

Because $v_{x_i}^1,\ldots,v_{x_i}^s$ have the same neighborhoods, we assume without loss of generality that for  $i\in\{1,\ldots,p\},$ $v_{x_i}^1,\ldots,v_{x_i}^{t_i}$ are saturated.  
For $i\in\{1,\ldots,q\}$ and $j\in\{1,\ldots,t_i\},$ denote by $u_i^j$ the vertex of $Z'$ such that $\{v_{x_i}^j,u_i^j\}\in M$. 
Notice that it can happen that $t_i=0$ and we have no such saturated vertices.
We define the path $P_i=v_{x_i}^1u_i^1v_{x_i}^2\ldots u_i^{s_i}v_{x_i}^{t_i+1}$ if $t_i\geq 1$ and let $P_i=v_{x_i}^1$ if $t_i=0$
for $i\in\{1,\ldots,p\}$. Let $P=P_1+v_{x_1}^{t_1+1}u_{x_1x_2}v_{x_2}^{1}+\ldots+v_{x_{p-1}}^{t_{p-1}+1}u_{x_{p-1}x_p}v_{x_p}^{1} + P_p$ and then form the cycle $C$ from $P$ by joining the end-vertices of $P$ by $v_{x_{p}}^{t_p+1} v_{x_{p}}^{t_p+2} u_{x_{p}x_1}v_{x_1}^{1}$
using the fact that $t_p\leq s-2$.
We have that  $Z\subseteq V(C)\subseteq Z\cup N_G(Z)$ and $v_{x_{p}}^{t_p+1} v_{x_{p}}^{t_p+2}\in E(C)$. It concludes Case 3 and the proof of the claim. 
\end{proof}

Let $Z\subseteq V(G'_k)$  be a set of size $k'$. Let $Z'=Z\cap\{u_{xy}|\{x,y\}\in E(G)\}$. If $Z'=\emptyset,$ then $Z$ is a clique and 
there is a cycle $C$ in $G'_k$ such that $Z\subseteq V(C)$. Suppose that $Z'\neq\emptyset$. By Claim, 
there is a cycle $C'$ in $G'_k$ such that $Z'\subseteq V(C')\subseteq Z'\cup N_G(Z')$ and
$C'$ has an edge $\{v_x^i,v_y^j\}$ for some $x,y\in V(G')$ and $i,j\in\{1,\ldots,s\}$. Let $\{u_1,\ldots,u_p\}=Z\setminus V(C')$. Notice that these vertices are pairwise adjacent and adjacent to $v_x^i,v_y^j$. We construct the cycle $C$ from $C'$ by replacing 
 $\{v_x^i,v_y^j\}$ by the path  $v_x^iu_1\ldots u_pv_y^j$. It remains to observe that $Z\subseteq V(C)\subseteq Z\cup N_G(Z)$.
\end{proof}

\subsection{Kernelization lower bounds for planar graphs}
Now we show that it is unlikely that \textsc{Cyclability}, parameterized by $k$, has a polynomial kernel when restricted to planar graphs. 
The proof uses the cross-composition technique introduced by Bodlaender, Jansen, and Kratsch~in~\cite{BodlaenderJK14}.

Let $L\subseteq\Sigma^*\times\mathbb{N}$ be a parametrized problem. Recall that a kernelization for a parameterized problem $L$ is an algorithm that takes an instance $(x,k)$ and maps it in time polynomial in $|x|$ and $k$ to an instance $(x',k')$ such that
\begin{itemize}
\item[i)] $(x,k)\in L$ if and only if $(x',k')\in L$,
\item[ii)] $|x'|$ is bounded by a computable function $f$ in $k$, and
\item[iii)] $k'$ is bounded by a computable function $g$ in $k$.
\end{itemize}
The output $(x',k')$ of kernelization is a \emph{kernel} and the function $f$ is the size of the kernel.  A kernel is \emph{polynomial} if $f$ is polynomial.

We also need the following additional definitions (see~\cite{BodlaenderJK14}). Let $\Sigma$ be a finite alphabet. An equivalence relation $\mathcal{R}$ on the set of strings $\Sigma^*$ is called a \emph{polynomial equivalence relation} if the following two conditions hold:
\begin{itemize}
\item[i)] there is an algorithm that given two strings $x,y\in\Sigma^*$ decides whether $x$ and $y$ belong to
the same equivalence class in time polynomial in $|x|+|y|$,
\item[ii)] for any finite set $S\subseteq\Sigma^*$, the equivalence relation $\mathcal{R}$ partitions the elements of $S$ into a
number of classes that is polynomially bounded in the size of the largest element of $S$.
\end{itemize}

Let $L\subseteq\Sigma^*$ be a language, let $\mathcal{R}$ be a polynomial
equivalence relation on $\Sigma^*$, and let $\mathcal{Q}\subseteq\Sigma^*\times\mathbb{N}$   
be a parameterized problem.  An \emph{AND-cross-composition of $L$ into $\mathcal{Q}$} (with respect to $\mathcal{R}$) is an algorithm that, given $t$ instances $x_1,x_2,\ldots,x_t\in\Sigma^*$ 
of $L$ belonging to the same equivalence class of $\mathcal{R}$, takes time polynomial in
$\sum_{i=1}^t|x_i|$ and outputs an instance $(y,k)\in \Sigma^*\times \mathbb{N}$ such that:
\begin{itemize}
\item[i)] the parameter value $k$ is polynomially bounded in $\max\{|x_1|,\ldots,|x_t|\} + \log t$,
\item[ii)] the instance $(y,k)$ is a \yes-instance for $\mathcal{Q}$ if and only each instance $x_i$ is a \yes-instance for $L$ for $i\in\{1,\ldots,t\}$.
\end{itemize}
It is said that $L$ \emph{AND-cross-composes into} $\mathcal{Q}$ if a cross-composition
algorithm exists for a suitable relation $\mathcal{R}$.

In particular, Bodlaender, Jansen and Kratsch~\cite{BodlaenderJK14} proved the following theorem.

\begin{theorem}[\cite{BodlaenderJK14}]\label{thm:BJK}
If an {\sf NP}-hard language $L$ AND-cross-composes into the parameterized problem $\mathcal{Q}$,
then $\mathcal{Q}$ does not admit a polynomial kernelization unless
${\sf NP}\subseteq{\sf co}\mbox{-}{\sf NP}/{\sf poly}$.
\end{theorem}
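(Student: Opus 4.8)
This is the composition-based kernelization lower bound of~\cite{BodlaenderJK14}, and the plan is to reduce a polynomial kernelization for $\mathcal{Q}$ to an \emph{AND-distillation} of the {\sf NP}-hard language $L$, which is ruled out unless ${\sf NP}\subseteq{\sf co}\mbox{-}{\sf NP}/{\sf poly}$. As in the framework, we assume the unparameterized version of $\mathcal{Q}$ lies in {\sf NP}. Assume for contradiction that $\mathcal{Q}$ has a polynomial kernelization $\mathcal{K}$ with output size bounded by a polynomial $f$, and fix an {\sf NP}-hard $L$ that AND-cross-composes into $\mathcal{Q}$ with respect to a polynomial equivalence relation $\mathcal{R}$. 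Given instances $x_1,\dots,x_t$ of $L$ of length at most $n$, first delete duplicate strings: this preserves the truth value of $x_1\in L\wedge\cdots\wedge x_t\in L$, and since there are fewer than $|\Sigma|^{n+1}$ strings of length at most $n$ we may assume $t\le 2^{O(n)}$, so $\log t=O(n)$. Then, using $\mathcal{R}$, partition the $x_i$ in polynomial time into equivalence classes $S_1,\dots,S_m$; the defining property of a polynomial equivalence relation gives $m=n^{O(1)}$, and $x_1\in L\wedge\cdots\wedge x_t\in L$ holds if and only if for every $j$ all elements of $S_j$ lie in $L$.

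Next I would apply the AND-cross-composition to each class $S_j$ (whose members lie in one $\mathcal{R}$-class, as required), obtaining in polynomial time an instance $(y_j,k_j)$ of $\mathcal{Q}$ with $k_j$ bounded by a polynomial in $\max_{x\in S_j}|x|+\log|S_j|$, hence $k_j=n^{O(1)}$, and with $(y_j,k_j)$ a \yes-instance of $\mathcal{Q}$ if and only if all of $S_j$ are \yes-instances of $L$. Applying $\mathcal{K}$ gives equivalent instances $(y_j',k_j')$ with $|y_j'|\le f(k_j)=n^{O(1)}$. Encode the tuple $((y_1',k_1'),\dots,(y_m',k_m'))$ as a string $z$ in polynomial time; then $|z|=m\cdot n^{O(1)}=n^{O(1)}$, a bound independent of $t$, and $z$ belongs to the language $L^\star$ of all such tuples all of whose entries are \yes-instances of $\mathcal{Q}$ if and only if $x_1\in L\wedge\cdots\wedge x_t\in L$. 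Since the unparameterized version of $\mathcal{Q}$ is in {\sf NP}, so is $L^\star$, and hence $(x_1,\dots,x_t)\mapsto z$ is an AND-distillation of the {\sf NP}-hard language $L$ into $L^\star$. By the AND-distillation lower bound --- the resolution of the ``AND-conjecture'' by Drucker, the deep complexity-theoretic ingredient underpinning~\cite{BodlaenderJK14} --- no {\sf NP}-hard language admits such a distillation unless ${\sf NP}\subseteq{\sf co}\mbox{-}{\sf NP}/{\sf poly}$, which is the desired contradiction.

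The routine parts are the composition/kernelization bookkeeping together with two size-control tricks: deduplication, which removes the $\log t$ dependence and makes $z$ a genuine distillation of size polynomial in $n$ and independent of $t$; and the $\mathcal{R}$-bucketing, which makes the cross-composition applicable --- it requires a single equivalence class --- while keeping the number $m$ of buckets polynomial in $n$. The true obstacle, which I would invoke as a black box rather than reprove, is the AND-distillation lower bound itself: showing that an {\sf NP}-hard problem admitting an AND-distillation forces ${\sf NP}\subseteq{\sf co}\mbox{-}{\sf NP}/{\sf poly}$ (and thereby a collapse of the polynomial hierarchy) requires an intricate information-theoretic argument and --- unlike the analogous OR-case, which follows from work of Fortnow and Santhanam --- was for several years a prominent open problem.
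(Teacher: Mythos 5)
The paper does not prove this statement at all---it is imported verbatim from Bodlaender--Jansen--Kratsch~\cite{BodlaenderJK14}---and your sketch is a faithful reconstruction of their argument: deduplicate, bucket by the polynomial equivalence relation $\mathcal{R}$, apply the AND-cross-composition per bucket, kernelize, concatenate into an instance of the tuple language $L^\star$, and invoke Drucker's AND-distillation lower bound as a black box, which is exactly how the cited proof proceeds. The only remark is that your added hypothesis that the unparameterized version of $\mathcal{Q}$ lies in {\sf NP} is unnecessary (and absent from the statement): Drucker's bound, like the Fortnow--Santhanam argument in the OR case, rules out AND-distillations of an {\sf NP}-hard language into an \emph{arbitrary} target language, so the {\sf NP}-membership of $L^\star$ plays no role.
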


We consider the auxiliary \textsc{Hamiltonicity with a Given Edge} problem that for a graph $G$ with a given edge $e$, asks whether $G$ has a Hamiltonian cycle that contains $e$.  
We use the following lemma.

\begin{lemma}\label{lem:Ham-edge}
\textsc{Hamiltonicity with a Given Edge} is  {\sf NP}-complete for cubic planar graphs.
\end{lemma}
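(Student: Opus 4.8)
**The plan is to prove Lemma~\ref{lem:Ham-edge} by a reduction from a known NP-complete Hamiltonicity-type problem on cubic planar graphs.** The natural starting point is the fact, cited already via~\cite{GareyJT76} (and used for Proposition~\ref{prop:NP-h}), that deciding whether a cubic planar graph is Hamiltonian is NP-complete. Membership in NP is immediate, since a Hamiltonian cycle through a prescribed edge is a polynomial-size certificate that can be verified in polynomial time. So the whole content is NP-hardness, and the cleanest route is a local gadget replacement that forces a designated edge into every Hamiltonian cycle while preserving both cubicity and planarity.

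\textbf{First I would fix the gadget.} Take an instance $G$ of \textsc{Hamiltonicity} on cubic planar graphs, pick an arbitrary edge $f=\{a,b\}$ of $G$, and subdivide it twice, inserting two new vertices $u,v$ so that the path $a\,u\,v\,b$ replaces $f$; then attach a small planar gadget to $u$ and $v$ that raises their degree back to $3$ and has the property that in any Hamiltonian cycle of the new graph $G'$ the edge $e=\{u,v\}$ must be traversed. The simplest realization: add a triangle $\{u,v,w\}$ together with one extra vertex $w'$ and edges making $w,w'$ degree $3$ — for instance take $w,w'$ joined to each other and each joined into a tiny pendant $K_4$-minus-an-edge block so the whole added piece is cubic, planar, and admits a Hamiltonian path between its two "ports'' only in a way that enters at $u$, runs $u\,v$, i.e. uses $e$. (Any of the standard degree-fixing Hamiltonicity gadgets works here; the point is that it is planar, cubic at all new internal vertices, and "transparent'' — a Hamiltonian cycle of $G'$ restricted to $V(G)$ plus the forced use of $e$ corresponds bijectively to a Hamiltonian cycle of $G$ using $f$, and since $G$ is Hamiltonian iff it has a Hamiltonian cycle and $f$ can be rotated out, one checks $G$ is Hamiltonian iff $G'$ has a Hamiltonian cycle through $e$). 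I would draw this gadget in a figure and verify the two directions of the equivalence by a short case analysis on how a Hamiltonian cycle can enter and leave the gadget.

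\textbf{The subtle point — and the step I expect to be the main obstacle — is making the reduction start from an instance where some edge is guaranteed to lie in a Hamiltonian cycle whenever one exists, i.e. handling the "$f$ can be rotated out'' claim cleanly.} In a cubic Hamiltonian graph every vertex has two of its three edges on any given Hamiltonian cycle, so it is \emph{not} true that an arbitrary edge lies on every Hamiltonian cycle, but it \emph{is} true that from each vertex at least one incident edge lies on a fixed Hamiltonian cycle. To get a clean "given edge'' instance I would instead use the well-known variant: \textsc{Hamiltonicity} is NP-complete for cubic planar graphs even when we additionally insist the cycle pass through a specified edge, which follows by the same gadget trick starting from plain cubic planar \textsc{Hamiltonicity} — pick any edge $f=\{a,b\}$, replace $a$ by the standard planar "exclusive-or'' or "forced-edge'' cubic gadget of Garey–Johnson–Tarjan type that forces traffic through a particular edge, and observe the resulting graph is Hamiltonian iff $G$ is and, by construction, its Hamiltonian cycles all use the forced edge $e$. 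The care needed is purely in checking planarity of the embedding (the gadget is inserted inside a face incident to $f$, so planarity is preserved) and cubicity (every original and every new internal vertex has degree exactly $3$); both are routine but must be stated. Once that is in place, the map $G\mapsto(G',e)$ is polynomial-time, and $G'$ is a cubic planar graph with a Hamiltonian cycle through $e$ iff $G$ is Hamiltonian, which is exactly the claimed NP-completeness of \textsc{Hamiltonicity with a Given Edge} on cubic planar graphs. $\qed$
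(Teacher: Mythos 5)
Your overall strategy---reduce from \textsc{Hamiltonicity} on cubic planar graphs, observe (correctly) that an arbitrary edge of a Hamiltonian cubic planar graph need not lie on any Hamiltonian cycle, and therefore replace a \emph{vertex} rather than an edge by a planar cubic gadget containing the designated edge---is exactly the route the paper takes. The paper deletes a vertex $v$ with neighbors $x,y,z$, inserts a fixed gadget $F$ with three ports $x',y',z'$ and a designated edge $e$, and proves the equivalence by exhibiting, for each of the three possible port pairs, a Hamiltonian path of $F$ between those ports that uses $e$; the converse direction follows from a cut argument (a Hamiltonian cycle of $G'$ crosses the three connector edges an even, nonzero number of times, hence exactly twice, so it traverses $F$ as a single Hamiltonian path and contracts back to a Hamiltonian cycle of $G$).

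The genuine gap is that you never exhibit the gadget, and the gadget \emph{is} the proof. Appealing to ``the standard planar exclusive-or or forced-edge cubic gadget of Garey--Johnson--Tarjan type'' assumes the very thing to be shown: that there exists a planar graph $F$, cubic at all internal vertices, with three degree-two ports on its outer face and a designated edge $e$, admitting for each of the three port pairs a Hamiltonian path between them that uses $e$. This is a concrete finite construction (the paper's Fig.~\ref{fig:F}, verified by a three-case analysis) and cannot be waved away. Your one concrete sketch is also broken beyond the ``rotation'' issue you flag yourself: a pendant block hung off $w'$ by a single edge can never be covered by a Hamiltonian cycle, since a cycle uses each edge at most once and so cannot cross a one-edge cut at all; and the triangle on $\{u,v,w\}$ does not force the edge $\{u,v\}$, because the cycle may instead use $\{u,w\}$ and $\{w,v\}$. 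A final, minor point: the property you aim for (``all Hamiltonian cycles of $G'$ use $e$'') is stronger than necessary; it suffices that \emph{some} Hamiltonian traversal of the gadget through each port pair uses $e$, which is what the paper actually verifies.
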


\begin{proof}
It was proved by Garey, Johnson and Tarjan in~\cite{GareyJT76} that {\sc Hamiltonicity}  is  {\sf NP}-complete for planar cubic graphs. Let
$G$ be a planar cubic graph, and let $v$ be an arbitrary vertex of $G$. Denote by $x,y,z$ the neighbors of $v$ in $G$. We replace $v$ by a gadget $F$ shown in Fig.~\ref{fig:F}. More precisely, we delete $v$, construct a copy of $F$ and add edges $\{x,x\rq{}\}$, $\{y,y\rq{}\}$ and $\{z,z\rq{}\}$. Denote by $G\rq{}$ the obtained graph. Clearly, $G\rq{}$ is a cubic planar graph.
We claim that $G$ is Hamiltonian if and only if $G\rq{}$ has a Hamiltonian cycle that contains the edge $e$ shown in Fig.~\ref{fig:F}.

\begin{figure}[ht]
\begin{center}
    \includegraphics[width=1\textwidth]{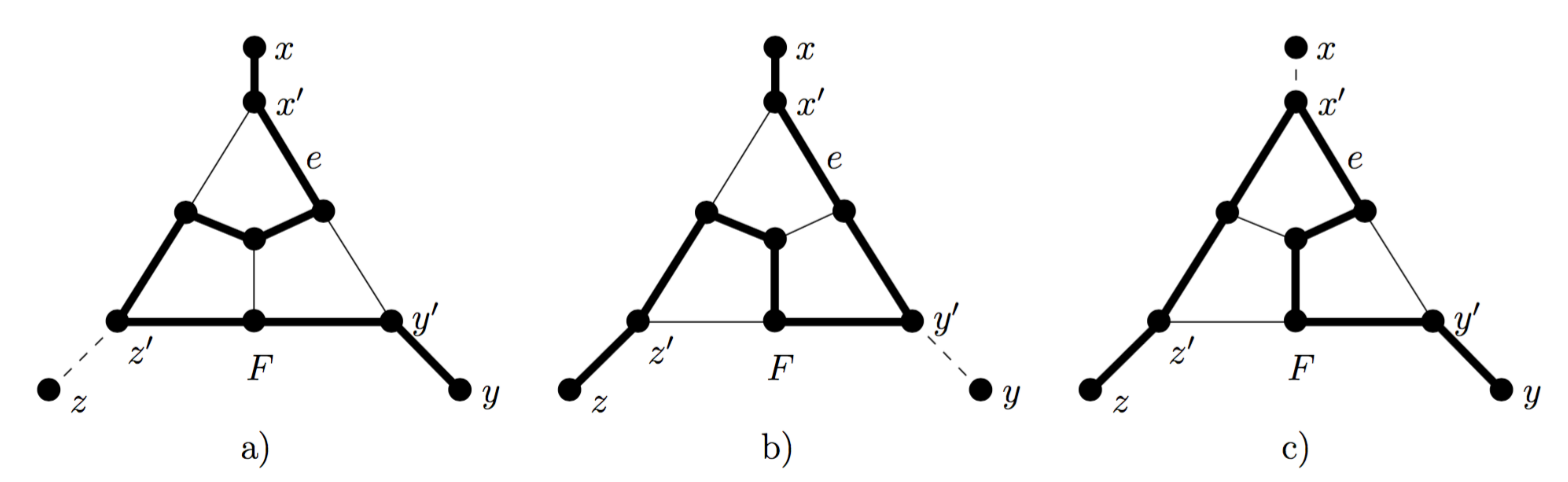}
\end{center}
\caption{The gadget $F$; the edges of Hamiltonian cycles are shown by the bold lines. 
\label{fig:F}}
\end{figure}

Suppose that $G$ has a Hamiltonian cycle $C$. Then $C$ contains two edges incident to $v$. We construct the Hamiltonian cycle in $G\rq{}$ by replacing these two edges by paths shown in Fig.~\ref{fig:F}. If $C$ contains $\{x,v\}$ and $\{v,y\}$, then they are replaced by the path shown in Fig.~\ref{fig:F} a), if $C$ contains $\{x,v\}$ and $\{v,z\}$, then they are replaced by the path shown in Fig.~\ref{fig:F} b) and if $C$ contains $\{y,v\}$ and $\{v,z\}$, then we use the path shown in Fig.~\ref{fig:F} c). It is easy to see that we obtain a Hamiltonian cycle that contains $e$. If $G\rq{}$ has a Hamiltonian cycle, then it is straightforward to see that $G$ is Hamiltonian as well.
\end{proof}

Now we are ready to prove  Theorem~\ref{thm:no-kernel}.


\begin{proof}[of Theorem~\ref{thm:no-kernel}]
We construct an AND-cross-composition of \textsc{Hamiltonicity with a Given Edge}. By Lemma~\ref{lem:Ham-edge}, the problem is {\sf NP}-complete.
We assume that  two instances $(G,e)$ and $(G',e\rq{})$ of \textsc{Hamiltonicity with a Given Edge}  are equivalent if $|V(G)|=|V(G')|$. 
Let $(G_i,e_i)$ for $i\in\{1,\ldots,t\}$ be  equivalent instances of \textsc{Hamiltonicity with a Given Edge}, $|V(G_i)|=n$. 
We construct the graph $G$ as follows (see Fig.~\ref{fig:G}).  
\begin{itemize}
\item[i)] Construct disjoint copies of $G_1,\ldots,G_t$.
\item[ii)] For each $i\in\{1,\ldots,t\}$, subdivide $e_i$ twice and denote the obtained vertices by $u_i,v_i$.
\item[iii)] For $i\in \{1,\ldots,t\}$, construct an edge $\{v_i,u_{i+1}\}$ assuming that $u_{n+1}=u_1$.
\end{itemize}
It is straightforward to see that $G$ is a cubic planar graph.

\begin{figure}[ht]
\begin{center}
    \includegraphics[width=0.65\textwidth]{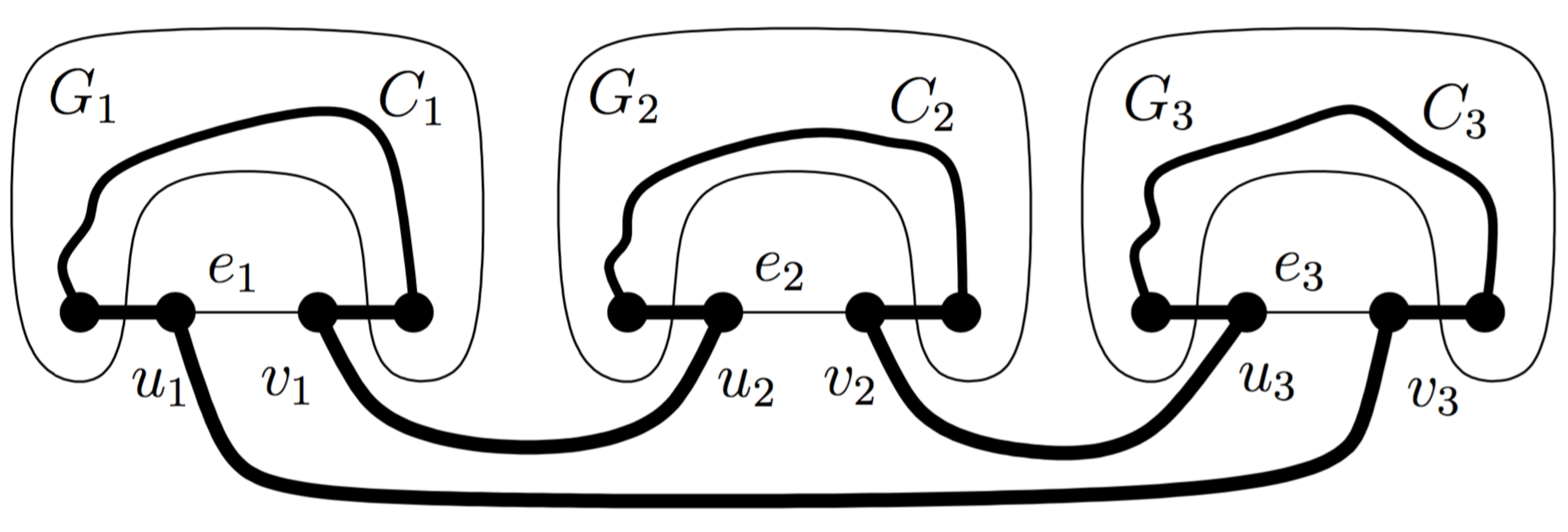}
\end{center}
\caption{The construction of $G$ for $t=3$; the edges of a Hamiltonian cycle in $G$ are shown by the bold lines. 
\label{fig:G}}
\end{figure}

We claim that $G$ is $n+2$-cyclable if and only if $(G_i,e_i)$ is a \yes-instance of \textsc{Hamiltonicity with a Given Edge} for every $i\in\{1,\ldots,t\}$. If every $G_i$ has a Hamiltonian cycle $C_i$ that contains $e_i$, then $G$ is Hamiltonian as well; the Hamiltonian cycle in $G$ is constructed from $C_1,\ldots,C_t$ as it is shown in Fig.~\ref{fig:G}.  Since $G$ is Hamiltonian, $G$ is $n+2$-cyclable. Suppose now that $G$ is $n+2$-cyclable. Let $i\in\{1,\ldots,t\}$. Consider $X=V(G_i)\cup \{u_i,v_i\}$. Because $|X|=n+2$, $G$ has a cycle $C$ that goes trough all the vertices of $X$.
 It remains to observe that by the removal of the vertices of $V(G)\setminus V(G_i)$ and by the addition of the edge $e_i$, we obtain from $C$ a Hamiltonian cycle in $G_i$ that contains $e_i$. 
\end{proof}

\section{Discussion}
\label{discisert}

Notice that we have no proof (or evidence) that {\sc Cyclability} is in {\sc NP}. The definition of the problem classifies it directly in ${\sf \Pi}_{2}^{\rm P}$. This prompts us to conjecture the following: 

\begin{conjecture}
\label{adeds}
{\sc Cyclability}  is  ${\sf \Pi}_{2}^{\rm P}$-complete.
\end{conjecture}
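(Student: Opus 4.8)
The plan splits into the routine upper bound and the genuinely open lower bound. Membership of {\sc Cyclability} in ${\sf \Pi}_{2}^{\rm P}$ is immediate: by definition $(G,k)$ is a \yes-instance if and only if for every $S\subseteq V(G)$ with $|S|=k$ there is a subgraph $C\subseteq G$ that is a cycle with $S\subseteq V(C)$; here both $S$ and $C$ are objects of size polynomial in $|V(G)|$, and the relation ``$C$ is a cycle of $G$ containing $S$'' is polynomial-time checkable, so the defining predicate has the shape $\forall S\,\exists C\,\varphi(G,k,S,C)$ with $\varphi$ decidable in polynomial time. Hence the real content of Conjecture~\ref{adeds} is the ${\sf \Pi}_{2}^{\rm P}$-hardness, and the plan is to give a polynomial-time reduction from the canonical ${\sf \Pi}_{2}^{\rm P}$-complete problem $\forall\exists$-{\sc Sat}: given a $3$-CNF formula $\psi$ over two disjoint variable blocks $X=\{x_1,\dots,x_m\}$ and $Y=\{y_1,\dots,y_n\}$, decide whether for every assignment $\alpha$ to $X$ there is an assignment $\beta$ to $Y$ with $\psi(\alpha,\beta)$ true.

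Passing to complements, a \no-instance of {\sc Cyclability} is a pair $(G,k)$ possessing some $k$-set with no cycle through it, while the negation of $\forall\exists$-{\sc Sat} asserts the existence of an $X$-assignment $\alpha$ that cannot be completed on $Y$ to satisfy $\psi$. The goal is therefore to build, in polynomial time, a graph $G$ and an integer $k=m+1$ such that the uncyclable $k$-sets of $G$ correspond bijectively to the non-completable $X$-assignments. I would assemble $G$ from three kinds of pieces. First, a rigid ``backbone'' through which every cycle long enough to carry a $k$-set is forced to pass, equipped with one distinguished ``witness'' vertex $w$ that lies on such a cycle only under favourable circumstances; this is exactly the trapping mechanism of the proof of Theorem~\ref{thm:W}, where an independent set $Y$ with $|Y|=|N_G(Y)|$ forces any cycle through $Y$ to stay inside $Y\cup N_G(Y)$ and thereby to avoid $w$. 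Second, for each universal variable $x_i$ a two-vertex selector $\{t_i,f_i\}$, so that the intended $k$-set $S=\{w\}\cup\{z_i: i\le m\}$ with $z_i\in\{t_i,f_i\}$ encodes an $X$-assignment $\alpha$. Third, a {\sc Hamiltonicity}-style clause/variable gadget for the $Y$-part, in the spirit of the {\sf NP}-hardness constructions for planar cubic graphs in~\cite{GareyJT76} and in Lemma~\ref{lem:Ham-edge}, wired to the selectors so that a cycle through $\{w\}\cup\{z_i\}$ exists precisely when the assignment $\alpha$ read off from the $z_i$ admits a satisfying completion on $Y$: if it does, the cycle traverses the gadget in the standard ``satisfying'' way, collects $w$, and closes up; if it does not, the trap of the first ingredient keeps $w$ off every cycle through the $z_i$.

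The delicate and error-prone part — and, I suspect, the reason the statement is left as a conjecture rather than proved — is tightness of the correspondence in the ``junk'' directions. For the reduction to work one must guarantee that \emph{every} $(m+1)$-subset of $V(G)$ that is \emph{not} of the canonical form $\{w\}\cup\{z_i\}$ (for instance, one meeting some selector in both $t_i$ and $f_i$, or containing gadget-internal vertices, or missing some selector entirely) is nevertheless cyclable; otherwise the complement of {\sc Cyclability} would be strictly larger than the set of non-completable $X$-assignments and one of the two implications would fail. Achieving this typically forces one to pad the construction so that the host graph is ``Hamiltonian-connected enough'' away from the trap — e.g.\ by making the part of $G$ outside the selectors and the $Y$-gadget into a clique-like or near-Hamiltonian region through which arbitrary small vertex sets can always be routed, much as in the split-graph argument behind Theorem~\ref{thm:W} — while \emph{simultaneously} preserving the exact $|Y|=|N_G(Y)|$ balance that makes the trap fire and while controlling the number of spurious uncyclable sets that the {\sc Hamiltonicity} gadget itself introduces. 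Reconciling these competing requirements is, I expect, the main obstacle; once it is settled, the two directions of the equivalence follow from the usual Hamiltonicity-gadget analysis together with the trapping lemma already exploited in Section~\ref{hardness}.
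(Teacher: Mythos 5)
The statement you are addressing is stated in the paper as a \emph{conjecture}: the authors explicitly say they have ``no proof (or evidence)'' beyond the observation that the definition places {\sc Cyclability} in ${\sf \Pi}_{2}^{\rm P}$. Your membership argument is correct and is exactly the paper's (one-line) observation: the predicate has the form $\forall S\,\exists C\,\varphi(G,k,S,C)$ with $\varphi$ polynomial-time decidable. So that half is fine, but it is also the only half the paper itself claims.

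For the hardness half, what you have written is a plan, not a proof, and you say so yourself. The genuine gap is that the reduction is never constructed: you describe three kinds of gadgets and then identify, correctly, that the entire difficulty lies in ensuring that \emph{every} non-canonical $(m+1)$-set is cyclable while the canonical sets $\{w\}\cup\{z_i\}$ are uncyclable exactly when the encoded $X$-assignment has no satisfying $Y$-completion --- and you leave that difficulty unresolved. There is moreover a structural tension you do not address: the trapping mechanism from the proof of Theorem~\ref{thm:W} (an independent set $Y$ with $|Y|=|N_G(Y)|$) makes a set uncyclable \emph{unconditionally}, by a counting argument that does not depend on any clause gadget; to make uncyclability of $\{w\}\cup\{z_i\}$ conditional on the non-satisfiability of $\psi(\alpha,\cdot)$ you would need the trap to ``fire'' only when the $Y$-gadget cannot be traversed in a satisfying way, which is a qualitatively different mechanism from anything in Section~\ref{hardness}. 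Until such a gadget is exhibited and the junk-set analysis is carried out, the hardness direction remains open, and the conjecture remains a conjecture.
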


Moreover, while we have proved that {\sc Cyclability}  is  {\sf co-W[1]}-hard, we have no evidence which level of the parameterized complexity hierarchy it belongs to (lower than the {\sf XP} class). We find it an intriguing question whether there is some $i\geq 1$
for which {\sc Cyclability}  is  ${\sf W}[i]$-complete (or ${\sf co}\mbox{-}{\sf W}[i]$-complete).

Clearly, a challenging question  is whether  the, double exponential, parametric dependance of our {\sf FPT}-algorithm can be improved.
We believe that this is not possible and we suspect that this issue might be related to Conjecture~\ref{adeds}.

Another direction of research is whether {\sc Cyclability}  is in  {\sf FPT} on more general graph classes.
Actually, all results that were used for our algorithm can be extended on graphs embeddable on surfaces 
of bounded genus -- see~\cite{GeelenRS04,DemaineHT06theb,RobertsonS-GMXIII,RobertsonS-XXI,Kawarabayashi:W10asho} -- and yield an {\sf FPT}-algorithm on such graphs  (with worst time bounds). 
We believe that this is still the case 
for graph classes excluding some fixed graph as a minor. However, in our opinion, such an extension even though possible would be too technically involved.



\end{document}